\newtheorem{definition}{Definition}
\newtheorem{proposition}[definition]{Proposition}
\newtheorem{corollary}[definition]{Corollary}
\def \CK{\mathcal{K}}
\def\CX {\mathcal{X}}
\def\CM { \mathcal{M}}
\def\CD {\mathcal{D}}
\def\a {\alpha}
\def\s {\sigma}
\def\a {\alpha}
\def\k {\kappa}
\def\J {\mathbb{I}}
\def\N {\mathbb{N}}
\def\G {\mathbb{G}}
\def\E {\mathbb{E}}
\def\RE {\mathbb{R}}
\def\X {\mathbb{X}}
\def\Y {\mathbb{Y}}
\def \TG {{G}}
\def \PPhi {{\tilde \varphi}}
\def \XXi {\alpha}
\begin{document}
\title{Beta-Product Poisson-Dirichlet Processes}
\author{Federico Bassetti}
\address{Universit\`a degli Studi di Pavia}
\email{federico.bassetti@unipv.it}

\author{Roberto Casarin}
\address{University Ca' Foscari, Venice}
\email{r.casarin@unive.it}

\author{Fabrizio Leisen}
\address{University Carlos III de Madrid}
\email{leisen@gmail.com}

\begin{abstract}
Time series data may exhibit clustering over time and, in a multiple time series context, the clustering behavior may differ across the series. This paper is motivated by the Bayesian non--parametric modeling of the dependence between the clustering structures and the distributions of different time series. We follow a Dirichlet process mixture approach and introduce a new class of multivariate dependent Dirichlet processes (DDP). The proposed DDP
are represented in terms of vector of stick-breaking processes with dependent weights. The weights are beta random vectors that determine different and dependent clustering effects along the dimension of the DDP vector. We discuss some theoretical properties and provide an efficient Monte Carlo Markov Chain algorithm for posterior computation. The effectiveness of the method is illustrated with a simulation study and an application to the United States and the European Union industrial production indexes.

\medskip

\par\noindent JEL: C11,C14,C32

\medskip

\par\noindent Keywords: Bayesian non--parametrics, Dirichlet process, Poisson–Dirichlet process, Multiple Time-series non--parametrics
\end{abstract}

\date{\today}

\maketitle

\section{Introduction}
This paper is concerned with some multivariate extensions of the Poisson-Dirichlet process. In this paper the class of models
considered  originates from the Ferguson Dirichlet process (DP) (see \cite{Ferguson73,Ferguson74})
that is now  widely used in non-parametric Bayesian statistics.  Our extensions rely upon the so-called Sethuraman's representation of the DP.  \cite{Sethuraman} shows that, given a Polish space $(\X,\CX)$ and a probability measure $H_0$ on $\CX$, a random probability measure $\TG$ on $\CX$ is  a Dirichlet Process of precision
parameter $\alpha>0$ and base measure $H_0$, in short $DP(\a,H_0)$, if and only if
it has the stick-breaking representation:
\begin{equation}\label{Setu}
\TG(\cdot)=\sum_{k\geq1}W_{k} \,\delta_{\PPhi_{k}}(\cdot),
\end{equation}
where $(\PPhi_k)_k$ and $(W_k)_k$ are stochastically independent,
$(\PPhi_k)_k$ is a sequence of independent random variables (atoms) with common distribution $H_0$ (base measure),
and the weights $W_k$s are defined by the stick-breaking construction:
\begin{equation}\label{stickmonodim}
W_k:=S_{k} \prod_{j <k} (1-S_{j}),
\end{equation}
$S_j$ being independent random variables with beta distribution $Beta(1,\alpha)$.

The DP has been extended in many directions. In this paper, we will build on a generalization of the DP
that is called the Poisson--Dirichlet process. A Poisson--DP, $PD(\alpha,l,H_0)$, with parameters $0\leq l<1$ and $\alpha>-l$ and base measure $H_0$, is a random probability measure
that can be represented with a Sethuraman-like construction by taking in \eqref{Setu}-\eqref{stickmonodim}
a sequence of independent random variables $S_k$ with $S_k \sim Beta(1-l,\alpha+lk)$, see \cite{Pitman2006} and \cite{PitmanYor1997}.
Further generalizations based on the stick-breaking construction of the DP can be found in \cite{IsJam2001}.

The DP process and its univariate extensions are now widely used in Bayesian non-parametric statistics. A recent account of Bayesian non-parametric inference can be found in \cite{BNP2010}. The univariate DP is usually employed as a prior for a mixing distribution, resulting in a DP mixture (DPM) model (see for example \cite{Lo1984}). The DPM models incorporate DP priors for
parameters in Bayesian hierarchical models, providing an extremely flexible class of models. More specifically, the DPM models define a random density
by setting:
\begin{equation}\label{Setu2}
f(y)=\int  \CK(y|\theta) \TG(d\theta)   =\sum_{k \geq 1}W_{k} \CK(y|\PPhi_{k}),
\end{equation}
where $\CK$ is a suitable density kernel. Due to the availability of simple and efficient methods for
posterior computation, starting from \cite{Escobar94} and \cite{EscobarWest1995}, DPM models  are now routinely
implemented and used in many fields.

The first aim of this paper is to introduce
new class of vectors of Poisson-Dirichlet processes and of DPM models. Vectors of random probability measures arise naturally in generalizations of the DP and DPM models that accommodate dependence of observations on covariates or time.
Using covariates, data may be divided into different groups and this leads
to consider group-specific random probability measures and,
as we shall see, to assume that the observations are partially exchangeable.

Probably the first paper in this direction, that introduced vectors of priors for partially exchangeable
data, is \cite{CifReg1978}. More recently, \cite{MacEachern1999,MacEachern2001} introduced the
so-called dependent Dirichlet process (DDP) and the DDP mixture models.
His specification of the DDP incorporates dependence
on covariates through the atoms while assuming fixed weights. More specifically, the atoms in the Sethuraman's
representation \eqref{Setu}-\eqref{Setu2} are replaced with stochastic processes $\PPhi_k(z)$, $z$ being a set of covariates.
There exist many applications of this specification of the DDP.
For instance,  \cite{DeIorio2004}
proposed an ANOVA-type dependence structure for the atoms, while \cite{Gelfand2004} considered a spatial dependence structure for the atoms.
Later, DDP with both dependent atoms and weights was introduced in
\cite{GriffinSteel2006}. Other constructions that incorporate a
dependence structure in the weights have been proposed, for instance, in
\cite{Duan2005,ChungDunson,DunsonPeddada2008,DunsonXueCarin2008} and \cite{RodriguezDunsonGelfand2010}.

Other approaches to the definition of dependent vectors of random measures
rely upon either suitable convex combinations of independent DPs (e.g., \cite{Muller2004,PennellDunson2006,HatjispyrosaNicolerisaWalker,Kolo2010})
or hierarchical structures of  stick-breakings (e.g.,  \cite{TehJordan2006,SudderthJordan2009}).

Finally, we should note that it is possible to follow alternative routes other than the Sethuraman's representation to the definition of vectors of dependent random probabilities. For example, \cite{LeisenLijoi2011} used normalized vectors of completely random measures, while \cite{IsZar2009} employed bivariate gamma processes.

In this paper, we introduce a new class of multivariate  Poisson-DP and DP by using a vector of stick-breaking processes with multivariate dependent weights. In the construction of the dependent weights, we consider
the class of multivariate beta distributions introduced by \cite{NadarajahKotz:2005} that have a tractable stochastic
representation and makes  the Bayesian inference procedures easier.
We discuss some properties of the resulting multivariate DP and Poisson-DP and show that our process has
the appealing property that the marginal distributions are DP or Poisson-DP.

The second aim of the paper is to apply the new DP to Bayesian non--parametric inference and to provide a simple and efficient method for posterior computation of DDP mixture models. We follow a data-augmentation
framework and extend to the multivariate context the slice sampling algorithm described in \cite{walker2007} and \cite{walker2011}. The sampling methods for the full
conditional distributions of the resulting Gibbs sampling procedure are detailed and the effectiveness of the proposed algorithm is studied with a set
of simulation experiments.

Another contribution of this paper is to present an application of the proposed multivariate DDP mixture models to multivariate time series modeling. In the recent years, the interest in Bayesian non-parametric models for time series has increased. In this context, DP have been recently employed in different
ways. \cite{RodriguezTerHorst2008} used a Dirichlet process to define an infinite mixture of time
series models. \cite{TaddyKottas2009} proposed a Markov-switching finite mixture of independent
Dirichlet process mixtures. \cite{JensenMaheu2010} considered Dirichlet process mixture of
stochastic volatility models and \cite{Griffin2010} proposed a continuous-time non--parametric model
for volatility. A flexible non--parametric model with a time-varying stick-breaking process
has been recently proposed by \cite{GriffinSteel2011}. In their model, a sequence of dependent
Dirichlet processes is used for capturing time-variations in the clustering
structure of a set of time series. In our paper, we extend the existing Bayesian non--parametric
models for multiple time series by allowing each series to have a possible different clustering structure and by accounting for dependence between the series-specific clusterings. Since we obtain
a dynamic infinite-mixture model and since the number of components with negligible weights can be different in each series, our model represents a non--parametric alternative to multivariate dynamic finite-mixture models (e.g., Markov-switching models) that are usually employed in time series analysis.

The structure of the paper is as follows. Section \ref{Sec2} introduces vectors of dependent stick-breaking processes and defines their properties. Section \ref{Sec4} introduces vectors of Poisson-Dirichlet processes for prior modelling. Section \ref{S:Slice} proposes a Monte Carlo Markov Chain (MCMC) algorithm for approximated inference for vector of DP mixtures. Section \ref{sec_results} provides some applications to both simulated data and to the time series of the industrial production index for the United States and the European Union. Section \ref{Concl} concludes the paper.

\section{Dependent stick-breaking processes}\label{Sec2}
Consider a set of observations, taking values in a space $\mathbb{Y}$,  say a subset of $\RE^d$,
divided in $r$ sub--samples (or group of observations), that is:
\[
Y_{ij} \qquad i=1,\dots r, \quad j=1,\dots,n_i.
\]
Above $Y_{ij}$ is the $j$-th observation within sub--sample $i$. For instance,
$i$ may correspond to a space label  or predictors.
Typically,  one assumes that the  observations of the block $i$
have the same (conditional) density $f_i$ and that
the observations are (conditionally) independent.
Hence,  to perform a non--parametric Bayesian analysis of the data,
one needs to specify
a prior distribution for the vector of densities
\(
(f_1,f_2,\dots,f_{r})\).
Moreover, in assessing a prior for $(f_1,f_2,\dots,f_{r})$, a possible interest is on
borrowing  information across blocks.
To do this, first we introduce  a sequence of density kernels $\CK_i:\mathbb{Y} \times \X \to [0,1]$ ($i=1,\dots,r$)
(where $\CK_i$ is jointly measurable
and $C \mapsto \int_C \CK_i(y|x)\nu(dy)$ defines a probability measure on $\Y$ for any $x$ in $\X$, $\nu$ being a dominating measure on $\Y$). Secondly we define:
\begin{equation}\label{mixturemodelDef-1}
f_i(y):=\int \,\, \CK_i(y|\theta) \TG_i(d\theta) \qquad i=1,\dots,r,
\end{equation}
where $(\TG_1,\dots,\TG_r)$ is a vector of dependent stick breaking
processes that will be defined in the next section.

\subsection{Vectors of stick-breaking processes}\label{SS:defstick}
Following a general definition of dependent stick-breaking  processes, essentially proposed in
\cite{MacEachern1999,MacEachern2001}, we let
\begin{equation}\label{DDP}
\TG_i(\cdot):=\sum_{k \geq 1} W_{i k} \,\, \delta_{\PPhi_{i k}}(\cdot) \qquad i=1,\dots,r.
\end{equation}
where the vectors of weights $W_k=(W_{1k},\ldots,W_{rk})$ and the atoms $\PPhi_{k}=(\PPhi_{1k},\ldots,\PPhi_{rk})$ satisfy the following hypotheses:
\begin{itemize}
\item $(\PPhi_k)_k$ and $(W_k)_k$ are stochastically independent;
\item $(\PPhi_k)_k$ is an i.i.d. sequence taking values in $\X^r$ with common probability distribution $G_0$;
\item  $(W_{k})_{k}$
are determined via the stick breaking construction
\[
W_{ik}:=S_{ik} \prod_{j <k} (1-S_{ij}), \quad i=1,\ldots,r
\]
where $\prod_{i}^j=1$ for $i>j$ and
$S_k=(S_{1k},\ldots,S_{rk})$ are stochastically independent
random vectors taking values in $[0,1]^r$ such that $\sum_{k \geq 1} W_{ik}=1$ a.s. for every $i$.
\end{itemize}

Note that $(\TG_1,\dots,\TG_r)$ is a  vector of dependent random measures whenever $(S_{11},\ldots,S_{r1})$ or $(\PPhi_{11},\ldots,\PPhi_{r1})$ are vectors of dependent random variables. The dependence between the measures affects the dependence structure underlying the densities $f_1,\dots,f_r$, which can be represented as infinite mixtures
\begin{equation}\label{mixturemodelDef}
f_i(y)=\sum_{k \geq 1}W_{i k} \,\, \CK_i(y|\PPhi_{i k}) \qquad i=1,\dots,r
\end{equation}
functions of the atoms $(\PPhi_k)_k$ and the weights $(W_k)_k$.

The above definition of dependent random measures is quite general. For the sake of completeness, we shall notice that our specification of vectors of
stick-breaking  processes can be extended, even if not straightforwardly, up to include more
rich structure such as the matrix of stick-breaking processes proposed in  \cite{DunsonXueCarin2008}.
In the rest of this section we briefly discuss the choice of atoms $\{\PPhi_k\}_{k}$ and analyze some general features of vectors of stick breaking processes.
While in the next section we focus  on the main contribution of this works which is a new specification of the stick-vectors $\{S_k\}_{k}$ based on multivariate beta distribution.

\subsection{Atoms}
The simplest assumption for the atoms is that they are common to all the  measures ${G}_{i}$.
Otherwise stated this means that the base measure of the atoms is
\begin{equation}\label{atoms1}
G_0(A_1 \times A_2 \cdots \times A_r)=H_0(\bigcap_i A_i)
\end{equation}
for every $A_1,\dots,A_r$ measurable subsets of $\X$, $H_0$ being a probability measure on $\X$,
which corresponds to the case
\begin{equation}\label{atoms2}
(\PPhi_{1k},\ldots,\PPhi_{rk})=(\PPhi_{0k},\PPhi_{0k},\ldots,\PPhi_{0k})
\end{equation}
with $\PPhi_{0k}$ distributed according to $H_0$.

Eventually one can choose a more complex structure for the law of the atoms, including
covariates (or exogenous effects) related to the specific block $i$.
For instance one can assume an ANOVA-like scheme of the form
\begin{equation}\label{atoms3}
(\PPhi_{1k},\ldots,\PPhi_{rk})=(\tilde \mu_{0k}+\tilde \mu_{1k},\tilde \mu_{0k}+\tilde \mu_{2k},\ldots,
\tilde \mu_{0k}+\tilde \mu_{rk})
\end{equation}
where $\tilde \mu_{0k}$ represents the overall ''mean'' (of the  $k$-th mixture component)
and $\tilde \mu_{ik}$ the specific  ''mean'' for factor $i$ (of
the $k$-th mixture component). A similar choice has been used in  \cite{DeIorio2004}.

In many situations it is reasonable to assume that the components of the mixture
are essentially the same for all the blocks but that they have different weights.
In addition, the choice \eqref{atoms1}-\eqref{atoms2} yields a simple form of the correlation between the related random measure. This feature,
which may be useful in the parameter elicitation, is discussed in the next subsection.

\subsection{Correlation and Dependence Structure}

In the general definition given in Subsection \ref{SS:defstick}
the vectors $S_k$ of stick variables are assumed to be independent.
If in addition one assumes that they have the same distribution, i.e. that
\begin{equation}\label{indepStick}
\text{ $(S_k)_{k \geq 1}$ is a sequence of i.i.d. random vectors},
\end{equation}
then it is easy to compute the correlation of two elements of the vector
$(\TG_1,\dots,\TG_r)$
as well as the correlation between $f_i(y)$ and $f_j(y)$.

For the shake of simplicity we shall consider only the case $i=1,j=2$ and set
\begin{equation}\label{defCor}
C_{1,2}:=\frac{\E[S_{11}S_{21}]}{1-\E[(1-S_{11})(1-S_{21})]} \sqrt{\frac{(2\E[S_{11}]-\E[S_{11}^2])(2\E[S_{21}]-\E[S_{21}^2])}{ \E[S_{11}^2]\E[S_{21}^2]}}.
\end{equation}
and, for every $y$,
\begin{equation*}
\begin{split}
\k_{G_{01}}(y)&:=\int \mathcal{K}_2(y\mid x)G_{01}(dx), \quad
\k_{G_{02}}(y):=\int \mathcal{K}_2(y\mid x)G_{02}(dx),\\
\k_{G_{0}}(y)&:=\int \mathcal{K}_1(y\mid x_1)\mathcal{K}_2(y\mid x_2)G_{0}(dx_1\times dx_2\times\X^{r-2})\\
\end{split}
\end{equation*}
where $G_{0i}$ denotes the $i$-th marginal of $G_{0}$.

\begin{proposition}\label{Prop1} Assume that \eqref{indepStick} holds true, then for all measurable set $A$ and $B$
\begin{equation}\label{corr}
Cor(\TG_1(A), \TG_2(B))=
C_{1,2}  \times \frac{G_0(A \times B\times\X^{r-2})-G_{01}(A) G_{02}(B)}{\sqrt{G_{01}(A)(1-G_{01}(A))G_{02}(B)(1-G_{02}(B))}}
\end{equation}
and for every $y$ in $\Y$
\begin{equation}
\begin{split}
Cor(f_1(y),f_2(y)) =& C_{1,2} \times \frac{\k_{G_{0}}(y)-\k_{G_{01}}(y) \k_{G_{02}}(y)}{\sqrt{\k_{G_{01}}(y)(1-\k_{G_{01}}(y))\k_{G_{02}}(y)(1-\k_{G_{02}}(y))}}.
\end{split}
\end{equation}
\end{proposition}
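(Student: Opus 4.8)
The plan is to reduce both correlations to the first two joint moments of the weights $W_{ik}$ and to probabilities under $G_0$, using three structural facts: the identity $\sum_{k\ge1}W_{ik}=1$ a.s.; the stochastic independence of the weight sequence $(W_k)_k$ from the atom sequence $(\PPhi_k)_k$; and the i.i.d.\ assumption \eqref{indepStick} on $(S_k)_k$ together with the fact that $(\PPhi_k)_k$ is i.i.d.\ with law $G_0$. It is convenient to set $M_i:=\sum_{k\ge1}\E[W_{ik}^2]$ and $N_{1,2}:=\sum_{k\ge1}\E[W_{1k}W_{2k}]$; every interchange of a sum and an expectation below has nonnegative summands and is thus justified by Tonelli's theorem, the basic instance being $\sum_{k,l}\E[W_{ik}W_{jl}]=\E\big[(\sum_kW_{ik})(\sum_lW_{jl})\big]=1$.

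\emph{One-dimensional moments.} From \eqref{DDP} and the independence of weights and atoms, $\E[\TG_i(A)]=\sum_k\E[W_{ik}]\,G_{0i}(A)=G_{0i}(A)$. Expanding $\TG_i(A)^2=\sum_{k,l}W_{ik}W_{il}\,\mathbf{1}_{\{\PPhi_{ik}\in A\}}\mathbf{1}_{\{\PPhi_{il}\in A\}}$ and separating the diagonal $k=l$ (where the atom part contributes $G_{0i}(A)$, since an indicator is idempotent) from the off-diagonal $k\ne l$ (where, distinct atoms being independent, it contributes $G_{0i}(A)^2$), and using $\sum_{k\ne l}\E[W_{ik}W_{il}]=1-M_i$, I obtain $Var(\TG_i(A))=M_i\,G_{0i}(A)(1-G_{0i}(A))$. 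To put $M_i$ in closed form, note that by \eqref{indepStick} the factors of $W_{ik}=S_{ik}\prod_{j<k}(1-S_{ij})$ are independent over $j$ and identically distributed in $k$, so $\E[W_{ik}^2]=\E[S_{i1}^2]\,\big(\E[(1-S_{i1})^2]\big)^{k-1}$; summing the geometric series gives $M_i=\frac{\E[S_{i1}^2]}{2\E[S_{i1}]-\E[S_{i1}^2]}$ (the common ratio is $<1$, which is automatic once $\sum_kW_{ik}=1$ a.s.).

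\emph{The cross term.} The same expansion applied to $\TG_1(A)\TG_2(B)=\sum_{k,l}W_{1k}W_{2l}\,\mathbf{1}_{\{\PPhi_{1k}\in A\}}\mathbf{1}_{\{\PPhi_{2l}\in B\}}$ gives, on the diagonal $k=l$, the atom contribution $G_0(A\times B\times\X^{r-2})$ (the joint law of $(\PPhi_{1k},\PPhi_{2k})$ is precisely that bivariate marginal of $G_0$) and, off the diagonal, $G_{01}(A)G_{02}(B)$; with $\sum_{k\ne l}\E[W_{1k}W_{2l}]=1-N_{1,2}$ this yields $Cov(\TG_1(A),\TG_2(B))=N_{1,2}\big(G_0(A\times B\times\X^{r-2})-G_{01}(A)G_{02}(B)\big)$. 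Arguing on the factors of $W_{1k}W_{2k}=S_{1k}S_{2k}\prod_{j<k}(1-S_{1j})(1-S_{2j})$ exactly as above gives $N_{1,2}=\frac{\E[S_{11}S_{21}]}{1-\E[(1-S_{11})(1-S_{21})]}$. Dividing $Cov(\TG_1(A),\TG_2(B))$ by $\sqrt{Var(\TG_1(A))Var(\TG_2(B))}$ then factors the answer as $\frac{N_{1,2}}{\sqrt{M_1M_2}}$ times the ratio of $G_0$-probabilities displayed in \eqref{corr}; substituting the closed forms for $M_1,M_2,N_{1,2}$ and comparing with \eqref{defCor} shows $\frac{N_{1,2}}{\sqrt{M_1M_2}}=C_{1,2}$, proving the first identity whenever $0<G_{01}(A)<1$ and $0<G_{02}(B)<1$ (so the denominator does not vanish).

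\emph{The density version and the delicate point.} For $f_1(y),f_2(y)$ I would rerun the identical argument on \eqref{mixturemodelDef}, replacing each indicator $\mathbf{1}_{\{\PPhi_{ik}\in A\}}$ by $\CK_i(y\mid\PPhi_{ik})$: the weight moments $M_i,N_{1,2}$ are untouched, the diagonal atom terms become $\int\CK_i(y\mid x)^2G_{0i}(dx)$ and $\k_{G_0}(y)$, and the off-diagonal ones become $\k_{G_{01}}(y)\k_{G_{02}}(y)$, giving at once $Cov(f_1(y),f_2(y))=N_{1,2}\big(\k_{G_0}(y)-\k_{G_{01}}(y)\k_{G_{02}}(y)\big)$ and $Var(f_i(y))=M_i\big(\int\CK_i(y\mid x)^2G_{0i}(dx)-\k_{G_{0i}}(y)^2\big)$. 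The step I expect to be the main obstacle is the identification of this variance with the denominator in the statement: it equals $M_i\,\k_{G_{0i}}(y)(1-\k_{G_{0i}}(y))$ precisely when $\int\CK_i(y\mid x)^2G_{0i}(dx)=\k_{G_{0i}}(y)$ — automatic for $\{0,1\}$-valued (e.g.\ indicator) kernels and, since $\CK_i$ is $[0,1]$-valued, only an upper bound in general, in which case the same formula holds with $\k_{G_{0i}}(y)(1-\k_{G_{0i}}(y))$ read as the $G_{0i}$-variance of $\CK_i(y\mid\cdot)$ and $C_{1,2}$ unchanged. Granting this identification (and $0<\k_{G_{0i}}(y)<1$), dividing out $C_{1,2}$ exactly as before gives the second identity; all remaining steps are the routine diagonal/off-diagonal bookkeeping of the earlier parts.
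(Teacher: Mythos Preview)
Your argument for the first identity is essentially the paper's: both expand $\TG_1(A)\TG_2(B)$ as a double sum over $(k,l)$, separate the diagonal $k=l$ from the off-diagonal $k\neq l$, and use the i.i.d.\ structure of the sticks to sum the resulting geometric series. Your packaging via $M_i=\sum_k\E[W_{ik}^2]$ and $N_{1,2}=\sum_k\E[W_{1k}W_{2k}]$ is slightly cleaner than the paper's direct manipulation of the sums \eqref{f1}--\eqref{f2}, but the substance is identical.

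For the second identity the paper's appendix proof is silent: it treats only $\TG_1(A),\TG_2(B)$ and stops at \eqref{f6}--\eqref{f7}. Your computation $Var(f_i(y))=M_i\bigl(\int\CK_i(y\mid x)^2\,G_{0i}(dx)-\k_{G_{0i}}(y)^2\bigr)$ is correct, and the concern you flag is genuine: this equals $M_i\,\k_{G_{0i}}(y)(1-\k_{G_{0i}}(y))$ only when $\CK_i(y\mid\cdot)$ is $\{0,1\}$-valued $G_{0i}$-almost surely, which fails for the Gaussian kernels used later in the paper. For general $[0,1]$-valued kernels the stated denominator should be replaced by the $G_{0i}$-variance of $\CK_i(y\mid\cdot)$, with $C_{1,2}$ unchanged, exactly as you say. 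This is not a gap in your argument but an imprecision in the proposition's second display that the paper's own proof does not address.
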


The proof of Proposition \ref{Prop1} is in Appendix.

\begin{corollary} Assume that  \eqref{atoms2} and \eqref{indepStick} hold true, then
for every measurable set $A$
\begin{equation}\label{corr2}
Cor(\TG_1(A),\TG_2(A))= C_{1,2}.
\end{equation}
where $C_{1,2}$ is defined in \eqref{defCor}.
\end{corollary}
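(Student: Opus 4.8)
The plan is to obtain the corollary as an immediate specialization of Proposition \ref{Prop1} to the atom structure \eqref{atoms1}--\eqref{atoms2}. First I would record what \eqref{atoms2} says about the law $G_0$ of the atom vectors: since $(\PPhi_{1k},\dots,\PPhi_{rk})=(\PPhi_{0k},\dots,\PPhi_{0k})$ with $\PPhi_{0k}\sim H_0$, the distribution $G_0$ is the pushforward of $H_0$ under the diagonal embedding $x\mapsto(x,\dots,x)$; in particular it is supported on the diagonal of $\X^r$. Consequently, for all measurable $A,B\subseteq\X$ one has $G_0(A\times B\times\X^{r-2})=H_0(A\cap B)$, and the two relevant marginals are $G_{01}=G_{02}=H_0$.

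Next I would plug $B=A$ into formula \eqref{corr} of Proposition \ref{Prop1}. The numerator of the second factor becomes $G_0(A\times A\times\X^{r-2})-G_{01}(A)G_{02}(A)=H_0(A)-H_0(A)^2=H_0(A)(1-H_0(A))$, while the denominator becomes $\sqrt{H_0(A)(1-H_0(A))\,H_0(A)(1-H_0(A))}=H_0(A)(1-H_0(A))$. Hence the second factor equals $1$, and \eqref{corr} collapses to $Cor(\TG_1(A),\TG_2(A))=C_{1,2}$, with $C_{1,2}$ as in \eqref{defCor}. This is the claimed identity.

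I do not expect a genuine obstacle here; the only points that need a word of care are (i) the book-keeping that \eqref{atoms1}--\eqref{atoms2} really do force $G_0$ onto the diagonal with marginals $H_0$, and (ii) the tacit nondegeneracy hypothesis $0<H_0(A)<1$, which is exactly the condition under which $\mathrm{Var}(\TG_i(A))>0$ so that the correlation in Proposition \ref{Prop1} (and hence in the corollary) is well defined; for $H_0(A)\in\{0,1\}$ each $\TG_i(A)$ is almost surely constant and the statement is vacuous. Everything else is the routine cancellation carried out above.
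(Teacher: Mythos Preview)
Your proof is correct and follows exactly the intended route: the paper states the corollary without a separate proof, treating it as an immediate specialization of Proposition~\ref{Prop1} with $B=A$ under the diagonal atom structure \eqref{atoms1}--\eqref{atoms2}, which is precisely what you carry out. Your remark on the nondegeneracy condition $0<H_0(A)<1$ is a useful clarification that the paper leaves implicit.
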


\subsection{Partial exchangeability}
We conclude this section
by observing that \([Y_{ij}: i=1,\dots r, j\geq 1]\)
is a partially exchangeable random array, indeed the joint law of the infinite process of observation
is characterized by:
\begin{equation}\label{parexc}
P\{ Y_{ij} \in A_{ij} \,\, i=1,\dots r, j=1,\dots,n_i\}=
\E\left[ \prod_{i=1}^r \left[\int_{\X} \prod_{j=1}^{n_i} \int_{A_{ij}} \CK_i(y_{ij}|x_{i}) \nu(dy_{ij}) \TG_i(dx_i)\right] \right],
\end{equation}
where the expectation is respect to the joint law of $(\TG_1,\dots,\TG_r)$.
Recall that an array $[Y_{ij}: i=1,\dots r, j\geq 1]$ is said to be
row-wise partially exchangeable if, for every $n>1$, every measurable sets $A_{ij}$
and any permutations $\rho_1,\dots,\rho_r$ of $\{1,\dots,n\}$,
\[
P\{ Y_{ij} \in A_{ij} \,\, i=1,\dots r, j=1,\dots,n\}
=P\{ Y_{i\rho_i(j)} \in A_{ij} \,\, i=1,\dots r, j=1,\dots,n\}.
\]
In other words,  the joint law is not necessarily invariant to permutations of observations
from different groups. From a practical point of view, the partial exchangeability  represents a
suitable model for sets of data that exhibit sub-samples with some possibly different features.

\section{Beta-Product Poisson-Dirichlet Processes}\label{Sec4}

We propose a new class of vector of dependent probability
measures $(\TG_1,\dots,\TG_r)$ in such a way
that (marginally) $\TG_i$ is a Dirichlet process  for every $i$.
This result follows from the Sethurman's representation \eqref{Setu} if one considers
a  multivariate distribution for $(S_{1k},\dots,S_{rk})$ such that
$S_{ik} \sim Beta(1,\alpha_{i}),\,\forall\,i,k$.

It is worth noticing that there are many possible definitions of  multivariate beta distribution
(see for example \cite{OlkinLiu:2003} and \cite{NadarajahKotz:2005}), but not all of them has a tractable stochastic representation and leads to
simple Bayesian inference procedures. For this reason we follow \cite{NadarajahKotz:2005} and consider a suitable product of
independent beta random variables. More specifically we apply the following result.
\begin{proposition}[\cite{RadhakrishnaRao}]
If $U_1,U_2,\dots,U_p$ are independent beta random variables with shape parameters $(a_i,b_i)$, $i=1,2,\dots,p$ and if $a_{i+1}= a_i + b_i$, $i=1,2,\dots,p-1$, then the product $U_1U_2 \cdots U_p$ is also a beta random variable with  parameters $(a_1,b_1+\dots+b_p)$.
\label{PropKrysicki}
\end{proposition}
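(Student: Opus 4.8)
The plan is to prove the statement by induction on $p$, reducing everything to the case $p=2$. The case $p=1$ is trivial. For the inductive step, suppose the result holds for any $p-1$ independent beta factors satisfying the hypothesis. Given $U_1,\dots,U_p$ as in the statement, the first $p-1$ of them still satisfy the chaining condition $a_{i+1}=a_i+b_i$, so by the inductive hypothesis $V:=U_1\cdots U_{p-1}\sim Beta(a_1,\,b_1+\dots+b_{p-1})$; moreover $V$ is a measurable function of $U_1,\dots,U_{p-1}$, hence independent of $U_p$. Telescoping the hypothesis gives $a_p=a_1+b_1+\dots+b_{p-1}$, so $U_p\sim Beta\bigl(a_1+(b_1+\dots+b_{p-1}),\,b_p\bigr)$, and $(V,U_p)$ is exactly a pair of independent betas of the form handled by the $p=2$ case. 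Applying that case yields $VU_p=U_1\cdots U_p\sim Beta(a_1,\,b_1+\dots+b_p)$, which closes the induction. So everything rests on the claim: if $U\sim Beta(a,b)$ and $U'\sim Beta(a+b,c)$ are independent, then $UU'\sim Beta(a,b+c)$.

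To prove this claim I would realize the pair $(U,U')$ through independent Gamma variables. Let $G_a\sim Gamma(a,1)$, $G_b\sim Gamma(b,1)$, $G_c\sim Gamma(c,1)$ be independent, and set $U:=G_a/(G_a+G_b)$ and $U':=(G_a+G_b)/(G_a+G_b+G_c)$. Three standard facts are needed. First, the classical beta--gamma decomposition: $G_a/(G_a+G_b)\sim Beta(a,b)$ and it is independent of the sum $G_a+G_b\sim Gamma(a+b,1)$. Second, as a consequence the pair $(G_a+G_b,\,G_c)$ is jointly independent of $U$, so $U'$, being a function of $(G_a+G_b,G_c)$, is independent of $U$; and since $G_a+G_b\sim Gamma(a+b,1)$ is independent of $G_c\sim Gamma(c,1)$, the same decomposition gives $U'\sim Beta(a+b,c)$. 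Third, therefore this constructed pair has the same joint law as the pair in the claim, so it suffices to compute the law of $UU'$ for this particular realization. But $UU'=G_a/(G_a+G_b+G_c)$, and since $G_a\sim Gamma(a,1)$ and $G_b+G_c\sim Gamma(b+c,1)$ are independent, one more application of the beta--gamma decomposition gives $UU'\sim Beta(a,b+c)$, as claimed.

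Alternatively, and avoiding the Gamma device, one can argue directly: the density of $X=UU'$ is $f_X(x)=\int_x^1 f_U(u)\,f_{U'}(x/u)\,u^{-1}\,du$, which after cancelling powers of $u$ in the two Beta densities becomes $\frac{x^{a+b-1}}{B(a,b)B(a+b,c)}\int_x^1 u^{-b-c}(1-u)^{b-1}(u-x)^{c-1}\,du$. The substitution $u=x/\bigl(1-(1-x)t\bigr)$, $t\in(0,1)$, makes all the $\bigl(1-(1-x)t\bigr)$ powers cancel and turns the integral into $x^{-b}(1-x)^{b+c-1}B(b,c)$; the Gamma-function identity $B(b,c)/\bigl(B(a,b)B(a+b,c)\bigr)=1/B(a,b+c)$ then identifies $f_X$ as the $Beta(a,b+c)$ density.

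The only genuinely delicate point is the independence used in the $p=2$ step --- namely that a product of two \emph{independent} betas with the prescribed parameters admits the Gamma realization above, which in turn rests entirely on the beta--gamma independence $G_a+G_b\perp G_a/(G_a+G_b)$. (In the direct approach, the corresponding obstacle is spotting the substitution that kills the $u^{-b-c}$ factor.) Everything else is bookkeeping with Gamma functions, and the chaining condition $a_{i+1}=a_i+b_i$ is precisely what makes the ``sum'' parameter $a_i+b_i$ of one factor equal the ``first'' parameter of the next, so that both the $p=2$ merging and the telescoping in the induction go through.
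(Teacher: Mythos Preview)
Your proof is correct, but it takes a different route from the paper. The paper argues via the Mellin transform: for $Z\sim Beta(a,b)$ one has $\E[Z^s]=\Gamma(a+b)\Gamma(a+s)/\bigl(\Gamma(a)\Gamma(a+b+s)\bigr)$, and then by independence $\E[(U_1U_2)^s]=\E[U_1^s]\E[U_2^s]$; the chaining condition $a_2=a_1+b_1$ makes the product of the two Gamma ratios telescope to the Mellin transform of a $Beta(a_1,b_1+b_2)$, and induction handles $p>2$. Your argument instead realizes the pair through independent Gamma variables and exploits the beta--gamma independence $G_a/(G_a+G_b)\perp G_a+G_b$, so that the product collapses to a single ratio $G_a/(G_a+G_b+G_c)$. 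The Mellin approach is shorter and purely algebraic (though it tacitly uses that moments determine a law on $[0,1]$), whereas your Gamma construction is more probabilistic and explains \emph{why} the chaining condition is natural: it is exactly what lets successive beta factors be read as nested ratios of cumulative Gamma sums. Either way the induction step is the same bookkeeping.
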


\begin{proof}
It is easy to check that the Mellin transform of a beta random variable $Z$ of parameters $(a,b)$ is given by
\[
\CM_Z(s):=\E[Z^s]=\frac{\Gamma(a+b)\Gamma(a+s)}{\Gamma(a)\Gamma(a+b+s)}.
\]
Hence, since $a_2=a_1+b_1$ and using the independence assumption
\[
\CM_{U_1U_2}(s)=\E[U_1^s]\E[U_2^s]=\frac{\Gamma(a_1+b_1)\Gamma(a_1+s)}{\Gamma(a_1)\Gamma(a_1+b_1+s)}
\frac{\Gamma(a_2+b_2)\Gamma(a_2+s)}{\Gamma(a_2)\Gamma(a_2+b_2+s)}
=\frac{\Gamma(a_1+b_1+b_2)\Gamma(a_1+s)}{\Gamma(a_1)\Gamma(a_1+b_1+b_2+s)}.
\]
Which gives the result for $p=2$. The general case follows.
\end{proof}

We obtain two alternative specifications of the multidimensional beta variables. Specifically, if we set
\begin{equation}\label{betastick2B}
(S_{1k},S_{2k},\dots,S_{rk}):=(V_{0k}V_{1k},V_{0k}V_{2k},\dots,V_{0k}V_{rk})
\end{equation}
with $V_{0k},\dots,V_{rk}$ independent, $V_{ik} \sim Beta(\alpha_{0k},\alpha_{1k})$ ($i=1,2,\dots,r$)
and $V_{0k} \sim Beta(\alpha_{0k}+\alpha_{1k},\alpha_{2k})$, then $S_{ik} \sim Beta(\alpha_{0k},\alpha_{1k}+\alpha_{2k})$.

As an alternative we consider
\begin{equation}\label{betastick3B}
(S_{1k},S_{2k},\dots,S_{rk}):=(V_{0k}V_{1k} \dots V_{r-1 k},V_{0k}V_{1k}\dots V_{r-2 k}, \dots, V_{0 k})
\end{equation}
with $V_{0k},\dots,V_{r-1 k}$  independent and $V_{ik} \sim Beta(\alpha_{0k}+\dots+\alpha_{i k},\alpha_{i+1 k})$,
$i=0,\dots,r-1$, that gives $S_{ik} \sim Beta(\alpha_{0k},\alpha_{1k}+\dots +\alpha_{r+1-i, k})$.

It should be noted that \eqref{betastick2B} resembles the specification of the matrix stick-breaking process in \cite{DunsonXueCarin2008}. In that paper all the components of the stick-breaking process are products of two independent beta variables with fixed parameters $(1,\alpha)$ and $(1,\beta)$, that precludes obtaining Poisson-Dirichlet marginals. For this reason we propose the specifications in  \eqref{betastick2B} and \eqref{betastick3B} that, for a special choice of the parameters $\alpha_{i k}$, allow for Poisson-Dirichlet marginals. In the first construction we obtain a random vector with identical marginal distributions, while in the second construction the vector has different marginals. Moreover, in the second case $S_{1k} \leq S_{2k} \leq \dots \leq S_{rk}$, which induces an ordering on the concentration parameters of the Dirichlet marginals. These aspects will be further discussed in the following sections.

\subsection{Dirichlet Process marginal.}\label{SS:dirichletmarginal}
For the sake of clarity, we start with $r=2$.  We assume \eqref{indepStick}  and we discuss how to choose the parameters in \eqref{betastick2B}-\eqref{betastick3B} in order to get
DP marginals. Note that since  \eqref{indepStick}  holds true,
then $S_k=(S_{1k},S_{2k})\sim (S_{11},S_{21})$.
According to the construction schemes given in \eqref{betastick2B} and \eqref{betastick3B},
with $\alpha_{0k}=1$, $\alpha_{1k}=\a_1$, $\alpha_{2k}=\a_2$, two possible and alternative specifications of $(S_{11},S_{21})$ are:
\begin{itemize}
\item[(H1)] {\it $(S_{11},S_{21}):=(V_{0}V_{1},V_0V_2)$, with $V_0,V_1,V_2$ independent,
$V_0 \sim Beta(1+\a_1,\a_2)$  and
$V_i \sim Beta(1,\a_1)$, $i=1,2$, where $\a_1>0$ and $\a_2>0$};
\item[(H2)] {\it  $(S_{11},S_{21}):=(V_{0}V_{1},V_0)$, with $V_0,V_1$ independent,
$V_0 \sim Beta(1,\a_1)$ and  $V_1 \sim Beta(1+\a_1,\a_2)$ with $\a_1>0$ and $\a_2>0$}.
\end{itemize}
Thanks to Proposition \ref{PropKrysicki}, if (H1) holds, then $S_{11}  \sim Beta(1,\a_1+\a_2)$  and $S_{21} \sim Beta(1,\a_1+\a_2)$, while if (H2) holds, then $S_{11} \sim Beta(1,\a_1+\a_2)$ and $S_{21}=V_0 \sim   Beta(1,\a_1)$. Hence, we have the following result.

\begin{proposition}\label{P:dirichmarg}
Under \eqref{indepStick}, if {\rm(H1)} holds true,  $\TG_1$ and $\TG_2$ are (marginally) Dirichlet processes with the same precision parameter
$\a_1+\a_2$ and base measures  $G_{01}$, $G_{02}$ respectively. If {\rm(H2)} holds true, then the first component of $(\TG_1,\TG_2)$ is a Dirichlet process with precision parameter $\a_1+\a_2$ and base measure $G_{01}$ and the second component is a Dirichlet process with precision parameter $\a_1$ and base measure $G_{02}$.
\end{proposition}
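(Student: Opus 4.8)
The plan is to reduce everything to Sethuraman's characterisation of the Dirichlet process recalled in \eqref{Setu}--\eqref{stickmonodim}, applied coordinate by coordinate. Fix $i\in\{1,2\}$ and look at the $i$-th component $\TG_i(\cdot)=\sum_{k\geq1}W_{ik}\,\delta_{\PPhi_{ik}}(\cdot)$. From the hypotheses in Subsection \ref{SS:defstick}, the atom sequence $(\PPhi_{ik})_k$ is the $i$-th coordinate of an i.i.d. sequence $(\PPhi_k)_k$ with law $G_0$, hence it is itself i.i.d. with law the $i$-th marginal $G_{0i}$, and it is independent of $(W_k)_k$, in particular of $(W_{ik})_k$. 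Likewise, under \eqref{indepStick} the vectors $(S_k)_k$ are i.i.d., so each marginal sequence $(S_{ik})_k$ is i.i.d.; since $W_{ik}=S_{ik}\prod_{j<k}(1-S_{ij})$ depends only on $(S_{ij})_{j\leq k}$, the weights $(W_{ik})_k$ have exactly the univariate stick-breaking form \eqref{stickmonodim} driven by the i.i.d. variables $S_{ik}$. Thus, to identify $\TG_i$ as a Dirichlet process it suffices to show that $S_{i1}$ has a $Beta(1,c_i)$ distribution for the appropriate constant $c_i$, and then invoke Sethuraman's theorem with $\alpha=c_i$ and $H_0=G_{0i}$ (the a.s. normalisation $\sum_k W_{ik}=1$ being guaranteed by the standing assumption, and in any case automatic once $c_i>0$).

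It then remains to compute the marginal law of each $S_{i1}$, which is where Proposition \ref{PropKrysicki} does the work. Under (H1) we write $S_{11}=V_1V_0$ with $V_1\sim Beta(1,\a_1)$ and $V_0\sim Beta(1+\a_1,\a_2)$; the chaining condition of Proposition \ref{PropKrysicki} holds because the first shape parameter of $V_0$, namely $1+\a_1$, equals the sum of the two shape parameters of $V_1$, so $S_{11}\sim Beta(1,\a_1+\a_2)$, and by the identical computation $S_{21}=V_2V_0\sim Beta(1,\a_1+\a_2)$. Hence both $\TG_1$ and $\TG_2$ are Dirichlet with precision $\a_1+\a_2$ and base measures $G_{01},G_{02}$. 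Under (H2) we have $S_{11}=V_0V_1$ with $V_0\sim Beta(1,\a_1)$ and $V_1\sim Beta(1+\a_1,\a_2)$, the chaining condition again being satisfied, so $S_{11}\sim Beta(1,\a_1+\a_2)$; while $S_{21}=V_0\sim Beta(1,\a_1)$ already. Sethuraman's theorem then yields $\TG_1\sim DP(\a_1+\a_2,G_{01})$ and $\TG_2\sim DP(\a_1,G_{02})$.

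I do not expect a genuine obstacle here: the argument is a bookkeeping exercise combining the independence structure imposed in Subsection \ref{SS:defstick}, the reduction of a vector stick-breaking process to its coordinates, and Proposition \ref{PropKrysicki}. The one point requiring a little care is verifying that the hypotheses of Sethuraman's characterisation are met for each marginal process --- specifically that $(\PPhi_{ik})_k$ is i.i.d., independent of the (correctly distributed) stick variables $(S_{ik})_k$ --- and that the order of the beta factors in (H1)--(H2) is precisely the one for which the chaining condition $a_{i+1}=a_i+b_i$ of Proposition \ref{PropKrysicki} applies; both are immediate once spelled out.
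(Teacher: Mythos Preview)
Your proposal is correct and follows exactly the same approach as the paper: the paper simply notes (immediately before stating the proposition) that, thanks to Proposition~\ref{PropKrysicki}, (H1) gives $S_{11},S_{21}\sim Beta(1,\a_1+\a_2)$ while (H2) gives $S_{11}\sim Beta(1,\a_1+\a_2)$ and $S_{21}\sim Beta(1,\a_1)$, and then states the result as an immediate consequence of Sethuraman's representation. Your write-up is more detailed in checking that the marginal atom and stick sequences inherit the required i.i.d.\ and independence structure, but the argument is the same.
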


Since with this construction the $\TG_i$'s are Dirichlet processes, we call $(\TG_1,\TG_2)$
Beta-Product Dependent Dirichlet Process of parameters $\XXi=(\a_1,\a_2)$ and base measure $G_0$, in short $\beta_{i}\!-\!\hbox{DDP}(\XXi,G_0)$, where $i=1$ for (H1) and $i=2$ for (H2). In addition, when we assume \eqref{atoms1}, we denote the resulting process with $\beta_{i}\!-\!\hbox{DDP}(\XXi,H_0)$.

It should be noted that the two processes have different marginal behaviors. The $\beta_{1}\!-\!\hbox{DDP}(\XXi,G_0)$ process has marginals with the same precision
parameter and should be used as a prior when the clustering along the different vector
dimension is expected to be similar. In the $\beta_{2}\!-\!\hbox{DDP}(\XXi,G_0)$ process, the precision parameter decreases along the vector dimension. This process should be used as a prior when a priori one suspects that the clustering features are different in the subgroups of observations.

\begin{figure}[t]
\begin{centering}
\begin{tabular}{cc}
\multicolumn{2}{c}{\textbf{Correlation under H1}}\\
\includegraphics[height=5cm,width=7cm, angle=0, clip=false]{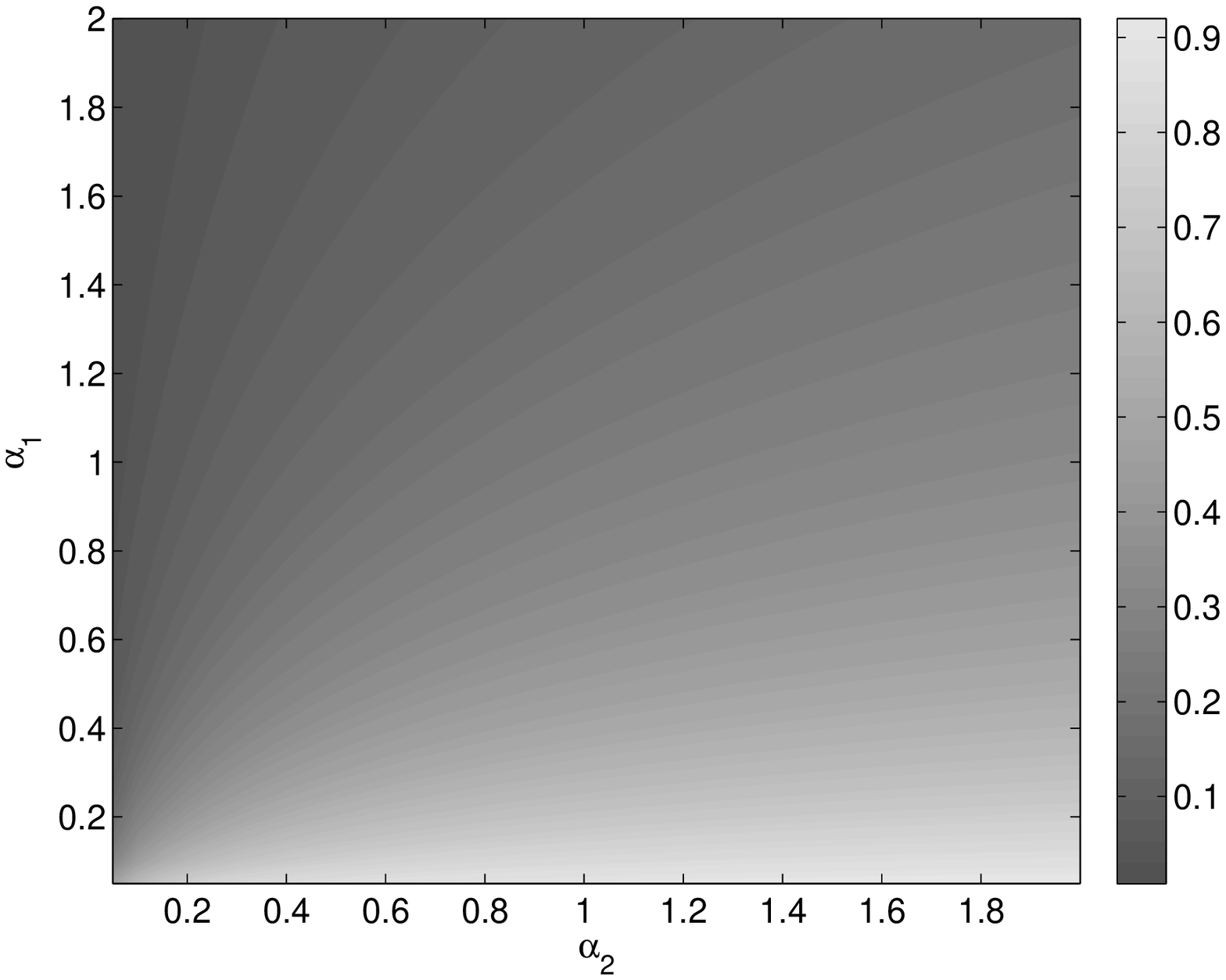}&\includegraphics[height=5cm,width=7cm, angle=0, clip=false]{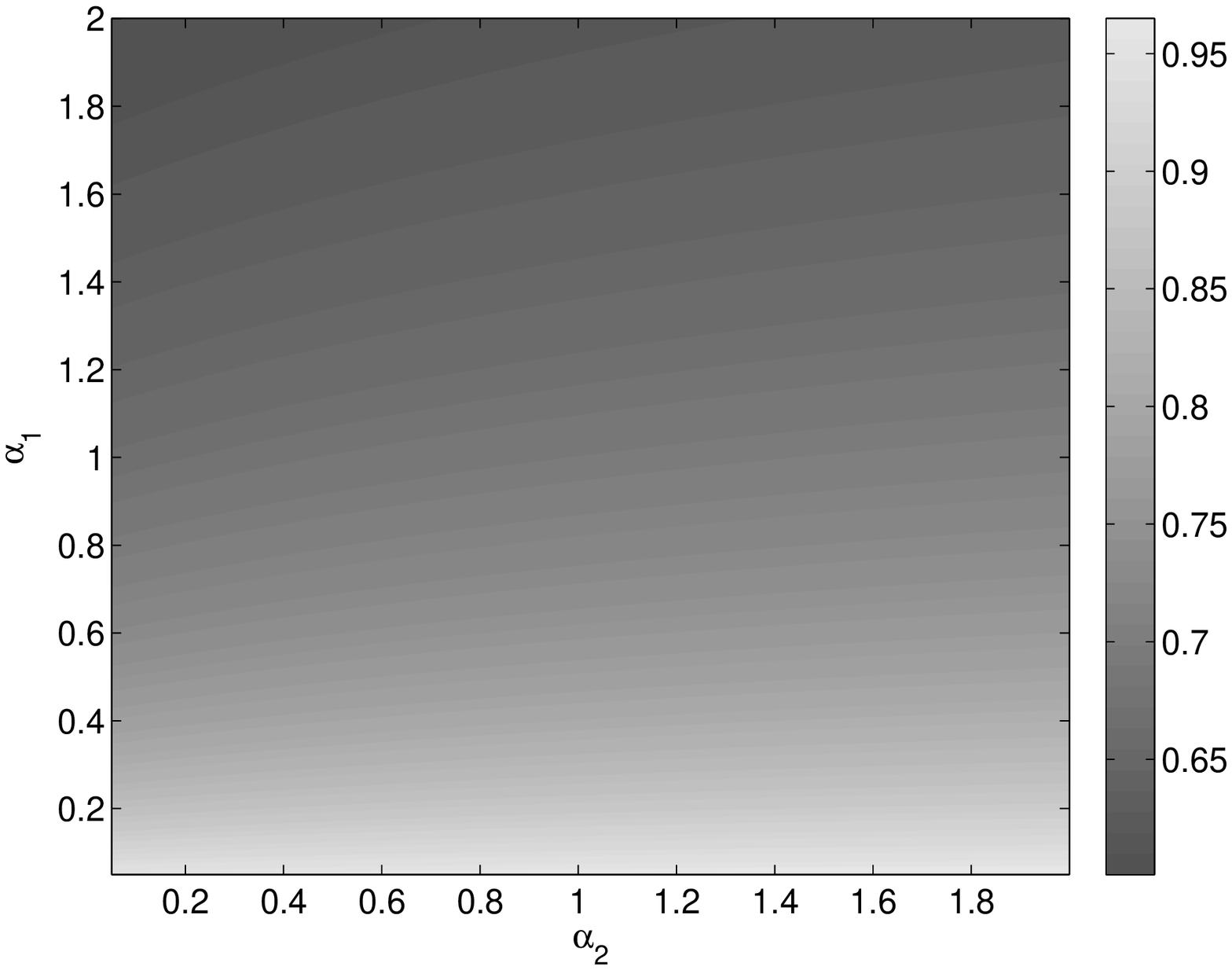}\\
\multicolumn{2}{c}{\textbf{Correlation under H2}}\\
\includegraphics[height=5cm,width=7cm, angle=0, clip=false]{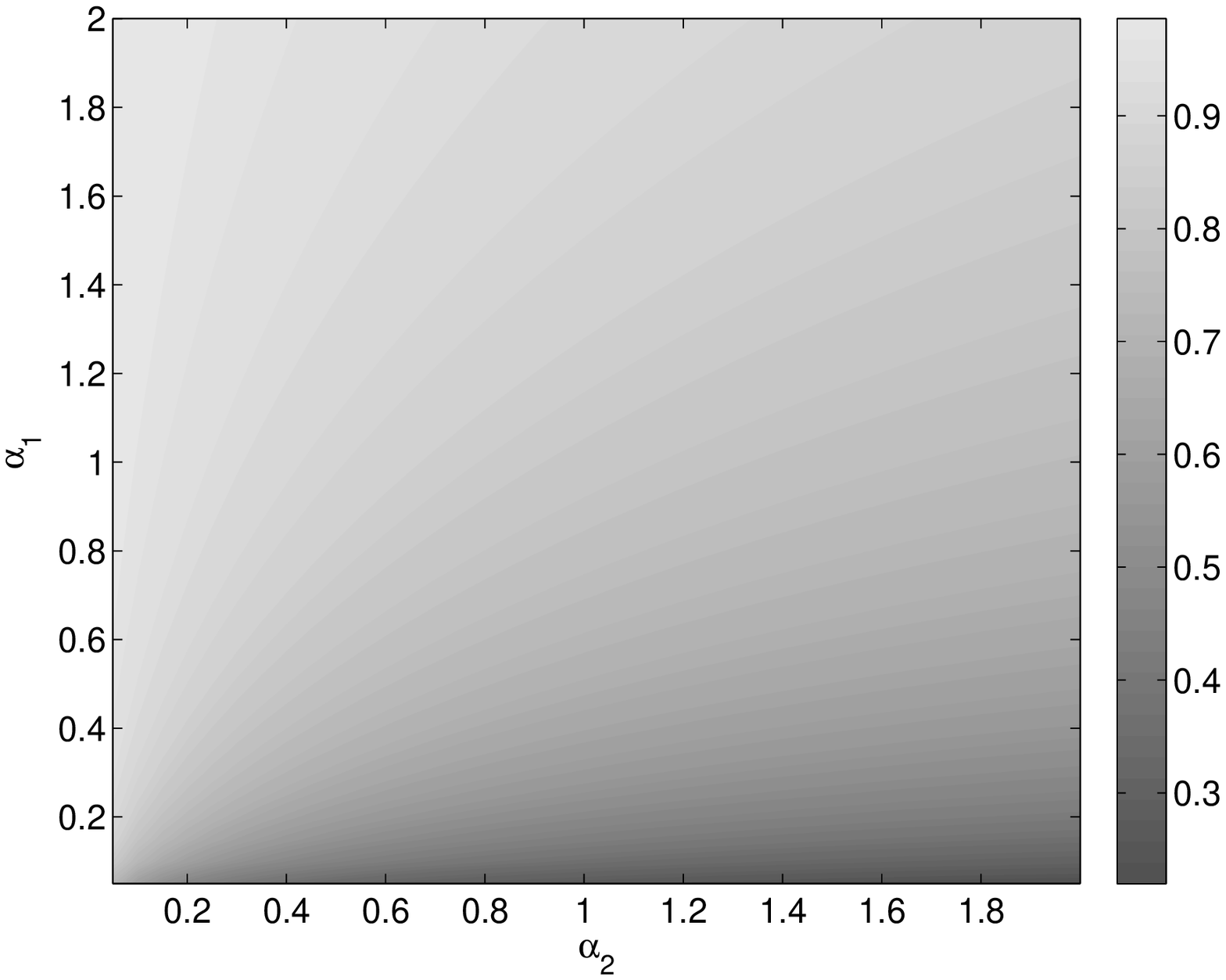}&\includegraphics[height=5cm,width=7cm, angle=0, clip=false]{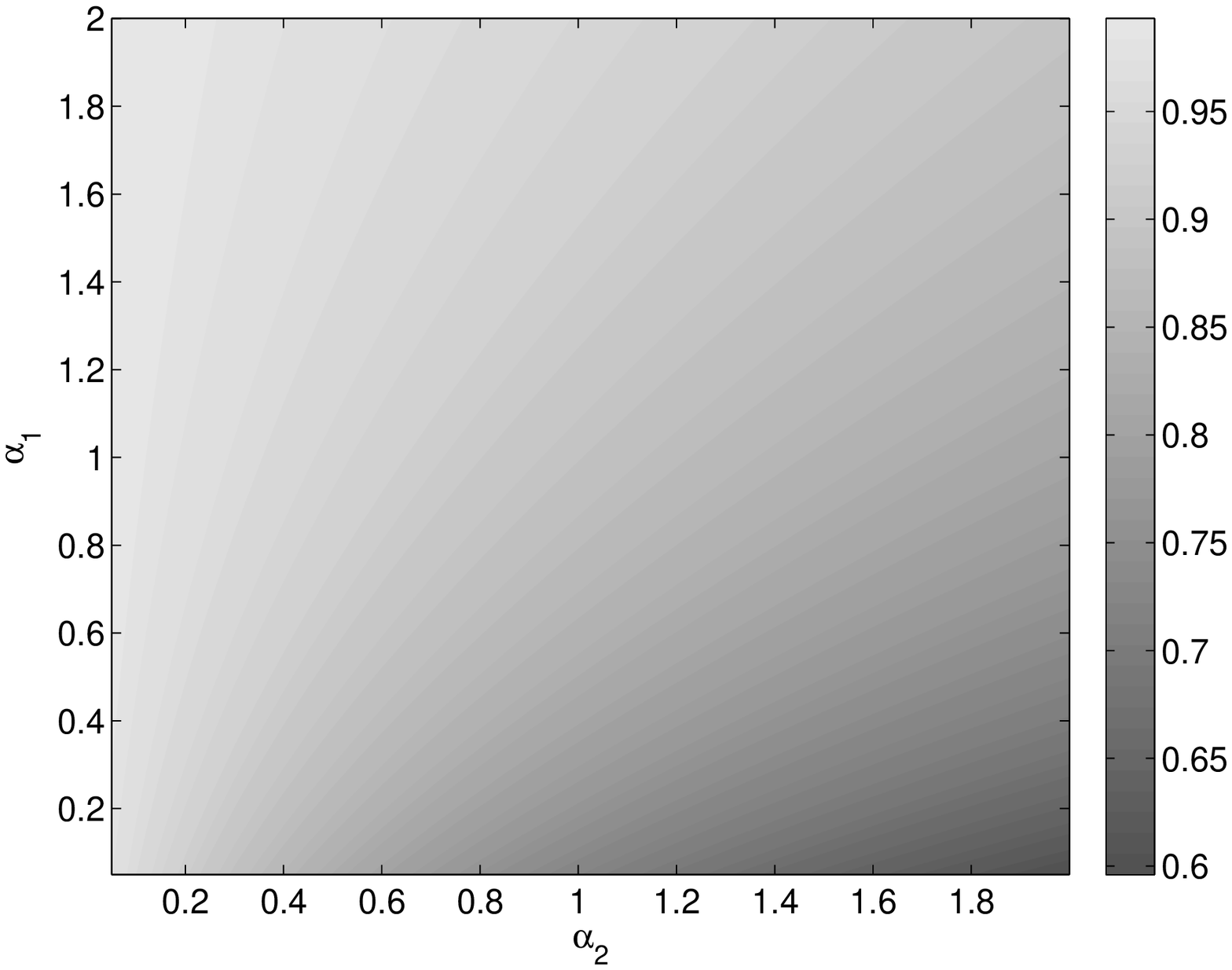}\\
\end{tabular}
\caption{Left column: correlation between $S_{11}$ and $S_{21}$ under (H1) (first row) and (H2) (second row). Right column: correlation between
${G}_1$ and ${G}_2$,  assuming \eqref{atoms1}, under (H1) (first row) and (H2) (second row)}\label{cor}
\end{centering}
\end{figure}

For parameter elicitation purposes, it is useful to analyze how the choice of $(\alpha_1,\alpha_2)$ affects the correlation
between ${\TG}_1$ and ${\TG}_2$. Let us start by considering the correlation between the stick variables.
From Theorem 4 and 6 in \cite{NadarajahKotz:2005}, one obtains the following correlation between the components
$S_{1h}$ and $S_{2h}$ in the  cases of (H1) and (H2)
$$
Cor(S_{11},S_{21})=\left\{%
\begin{array}{ll}
\frac{\alpha_2}{(\a_1+1)(\a_1+\a_2)}                          &\hbox{for}\quad (H1)\\
\sqrt {\frac{\a_1(2+\a_1+\a_2)}{(\a_1+\a_2)(2+\a_1)}}            &\hbox{for}\quad (H2).\\
\end{array}
\right.
$$

Fig. \ref{cor} shows the correlation level between the stick-breaking components (left column) and the random measures (right column) for different values of $\alpha_{1}$ and $\alpha_{2}$. In these graphs, the white color is used for correlation values equal to one and the
black is used for a correlation value equal to zero. The gray areas represent correlation values in the $(0,1)$ interval under both (H1) and (H2) beta models. According to the graph at the top-left of Fig. \ref{cor}, one can conclude that the parametrization used in this paper allows for covering the whole range of possible correlation values in the (0,1) interval. For instance, under (H1), a low correlation between the components of the stick-breaking corresponds to low values of the parameter $\a_1$, say between 0 and 0.1, for any choice of the parameter $\a_2$.

\begin{corollary}\label{corrH} Under the same assumptions of {\rm Proposition \ref{P:dirichmarg}},
\[
C_{1,2}=\left\{%
\begin{array}{ll}
\left(\frac{(\a_1+\a_2+1)(\a_1+2)}{2(\a_1+1)(\a_1+\a_2+1)-(\a_1+2)}\right)&\hbox{for}\quad (H1)\\
 \frac{2(\a_1+1)}{(\a_1+2)(2\a_1+\a_2+1)-\a_1} \sqrt{(\a_1+1)(\a_1+\a_2+1)} &\hbox{for}\quad (H2).\\
\end{array}
\right .
\]
\end{corollary}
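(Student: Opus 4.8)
The plan is to compute $C_{1,2}$ directly from its definition \eqref{defCor} by evaluating the three types of moment that appear there: $\E[S_{11}S_{21}]$, the quantities $\E[S_{i1}]$ and $\E[S_{i1}^2]$ for $i=1,2$, and $\E[(1-S_{11})(1-S_{21})]$. Under (H1) the pair is $(S_{11},S_{21})=(V_0V_1,V_0V_2)$ with $V_0,V_1,V_2$ independent, $V_0\sim Beta(1+\a_1,\a_2)$ and $V_1,V_2\sim Beta(1,\a_1)$; under (H2) it is $(S_{11},S_{21})=(V_0V_1,V_0)$ with $V_0\sim Beta(1,\a_1)$ and $V_1\sim Beta(1+\a_1,\a_2)$. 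By Proposition \ref{P:dirichmarg} (or directly Proposition \ref{PropKrysicki}) the marginals are known: under (H1) both $S_{i1}\sim Beta(1,\a_1+\a_2)$, and under (H2) $S_{11}\sim Beta(1,\a_1+\a_2)$, $S_{21}\sim Beta(1,\a_1)$. Since for $Z\sim Beta(a,b)$ one has $\E[Z]=a/(a+b)$ and $\E[Z^2]=a(a+1)/((a+b)(a+b+1))$, the ``marginal'' moments $\E[S_{i1}]$, $\E[S_{i1}^2]$, and hence the radical factor $\sqrt{(2\E[S_{11}]-\E[S_{11}^2])(2\E[S_{21}]-\E[S_{21}^2])/(\E[S_{11}^2]\E[S_{21}^2])}$ in \eqref{defCor}, are immediate substitutions.

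The only genuinely multivariate input is the joint moment $\E[S_{11}S_{21}]$ and the cross term $\E[(1-S_{11})(1-S_{21})]=1-\E[S_{11}]-\E[S_{21}]+\E[S_{11}S_{21}]$, so everything reduces to computing $\E[S_{11}S_{21}]$. In both cases this factors over the independent $V$'s: under (H1), $\E[S_{11}S_{21}]=\E[V_0^2]\E[V_1]\E[V_2]$, and under (H2), $\E[S_{11}S_{21}]=\E[V_0^2]\E[V_1]$. Each of these is a product of elementary beta moments with the stated parameters — for instance under (H1), $\E[V_0^2]=(1+\a_1)(2+\a_1)/((1+\a_1+\a_2)(2+\a_1+\a_2))$ and $\E[V_i]=1/(1+\a_1)$. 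Plugging these into the definition of $C_{1,2}$ and simplifying the resulting rational (for (H1)) or rational-times-square-root (for (H2)) expression yields the two formulas in the statement.

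The main obstacle is purely algebraic: after substituting the beta moments, the numerator $\E[S_{11}S_{21}]$ and the denominator $1-\E[(1-S_{11})(1-S_{21})]$ are ratios of polynomials in $\a_1,\a_2$ with several common factors (notably powers of $(1+\a_1)$ and $(1+\a_1+\a_2)$), and one must cancel them carefully to reach the compact forms displayed; likewise the square-root factor must be combined and its arguments recognized as perfect structures so that, e.g., under (H2) the residual radical is exactly $\sqrt{(\a_1+1)(\a_1+\a_2+1)}$. I would organize the computation by first writing $C_{1,2}=\dfrac{\E[S_{11}S_{21}]}{\E[S_{11}]+\E[S_{21}]-\E[S_{11}S_{21}]}\cdot(\text{radical})$, evaluating the fraction and the radical separately, and only multiplying and simplifying at the end; no conceptual difficulty is expected beyond bookkeeping. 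Since the corollary is a direct specialization of Proposition \ref{Prop1}'s ingredient $C_{1,2}$, no further results are needed.
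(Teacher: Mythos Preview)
Your proposal is correct and follows essentially the same route as the paper: compute the marginal moments $\E[S_{i1}]$, $\E[S_{i1}^2]$ from the known beta marginals, obtain the cross moment $\E[S_{11}S_{21}]$ by factoring over the independent $V$'s (the paper expresses the same thing as a ratio of beta functions, citing Nadarajah--Kotz), and substitute into the definition of $C_{1,2}$. The only work left is the algebraic simplification you already flagged.
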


Recall that if
\eqref{atoms1} holds true,
then for any measurable set $A$:
$$
Cor(\TG_1(A),\TG_2(A))=C_{1,2}.
$$
The graphs at the bottom of Fig. \ref{cor} show how the parameters $\a_1$ and $\a_2$ affect the correlation between the components $S_{1h}$ and $S_{2h}$
of the bivariate beta used in the stick breaking process and the correlation between the random measures
$\TG_{1}$ and $\TG_{2}$ -- when assuming \eqref{atoms1}.

It is worth noticing that, under (H1), $(S_{11},S_{21})$ converges (in distribution)
to $(V_1,V_2)$ as $\a_2 \to 0$, where $V_1$ and $V_2$ independent random variables with  distribution $Beta(1,\a_1)$.
While, under (H2), $(S_{11},S_{21})$ converges to $(V_0,V_0)$ as $\a_2 \to 0$, where $V_0$ is a $Beta(1,\a_1)$ random variable.
In particular, if one assumes \eqref{atoms1} and (H2), when
$\a_2 \to 0$, one gets the limit situation in which all the observations are sampled from
a common mixture of Dirichlet processes. In other words, in this limit case, as can be seen by \eqref{parexc}
for $\TG_1=\TG_2$,  one considers
the observations (globally) exchangeable, so no distinction between the two blocks are allowed.
The other limiting case is when one assumes (H1) and takes
\((\PPhi_{1k},\PPhi_{2k})\) to be independent random elements with probability distribution $G_{01}$ and $G_{02}$. In the limit
for $\a_2 \to 0$, one obtains two independent Dirichlet processes $\TG_1$ and $\TG_2$
with base measures $G_{01}$ and $G_{02}$. In other words,
with this choice, one considers the blocks of observations as two independent blocks of exchangeable variables and no sharing of information is allowed.

The (H1) construction for the case $r>2$ follows immediately from \eqref{betastick2B}
with $\alpha_{0k}=1$, $\alpha_{1k}=\a_1$, $\alpha_{2k}=\a_2$, that is
 by assuming $V_ {0k},V_{1k}$, $V_{2k}$, $\dots,V_{rk}$ to be independent with $V_{0k}\sim Beta(1+\a_1,\a_2)$ and $V_{ik} \sim Beta(1,\a_1)$, $i=1,2,\dots,r$, where $\a_1>0$ and $\a_2>0$. In this case, $S_{ik}$ has $Beta(1,\a_1+\a_2)$ distribution for $i=1,\dots,r$ and hence
$\TG_i$ is marginally $DP(\a_1+\a_2,G_{0i})$. Also the formula for the correlation between two measures easily extends to the case $r>2$ under \eqref{atoms1}:
\[
Corr(\TG_i(A),\TG_j(A))=
\frac{(\a_1+\a_2+1)(\a_1+2)}{2(\a_1+1)(\a_1+\a_2+1)-(\a_1+2)}.
\]
The (H2) construction extends to $r>2$ by setting in \eqref{betastick3B}
$\alpha_{0k}=1$ and $\alpha_{ik}=\alpha_i$ ($i \geq 1$),
that is by taking $V_{0k},\dots,V_{r-1\,k}$ to be independent random variables with $V_{0k} \sim Beta(1,\alpha_1)$, $V_{1k} \sim Beta(1+\alpha_1,\alpha_2)$,
$\dots$,  $V_{r-1\,k} \sim Beta(1+\alpha_1+\dots+\alpha_{r-1},\alpha_{r})$, where $\alpha_i>0$ for all $i=1,\dots,r$. In this last case  $S_{ik} \sim Beta(1,\alpha_1+\dots+\alpha_{r-i+1})$ for every $i=1,\dots,r$. Hence $\TG_i$ is marginally $DP(\alpha_1+\dots+\alpha_{r-i+1},G_{0i})$.
Under \eqref{atoms1} the correlation between $\TG_{i}(A)$ and $\TG_{j}(A)$ with $1 \leq i <j \leq r$ is given
by
\begin{equation}\label{corr-r>2H2}
\begin{split}
& Corr(\TG_{i}(A),\TG_{j}(A)) \\
&\quad =\frac{2\sqrt{(1+\alpha_1+\dots+\alpha_{r-i+1})} (1+\alpha_1+\dots+\alpha_{r-j+1})^{\frac{3}{2}}}{2(1+\alpha_1+\dots+\alpha_{r-j+1})^2+(2+\alpha_1+\dots+\alpha_{r-j+1})(\alpha_{r-j+2}+\dots+\alpha_{r-i+1})}. \\
\end{split}
\end{equation}
The proof of this last result is given in the Appendix.

\subsection{Poisson-Dirichlet process marginal}
{
Recall that a Poisson--Dirichlet process, $PD(\alpha,l,H_0)$, with parameters $0\leq l<1$ and $\alpha>-l$, and base measure $H_0$, is obtained by taking in \eqref{Setu}-\eqref{stickmonodim} a sequence
of independent random variables $S_k$ with $S_k \sim Beta(1-l,\alpha+lk)$.
In this section we show that by a suitable  choice of the parameters in \eqref{betastick2B}-\eqref{betastick3B} we obtain a vectors of dependent random measures with Poisson-Dirichlet marginals.

In the first case use \eqref{betastick2B} with $\alpha_{0k}=1-l$, $\alpha_{1k}=\a_1$ and $\alpha_{2k}=\a_2+lk$, that is
take  $V_{0k},\dots,V_{rk}$ to be independent random variables such that
\begin{equation}\label{H1PD}
V_{0k} \sim Beta(1-l+\a_1,\a_2+lk), \quad V_{ik} \sim Beta(1-l,\a_1) \quad i=1,\dots,r
\end{equation}
where $\a_1>0$, $\a_2>0$, and $0\leq l<1$.
Proposition \ref{PropKrysicki} yields
that $S_{ik}  \sim Beta(1-l,\a_1+\a_2+lk)$.
In the second case use \eqref{betastick3B} with $\alpha_{0k}=1-l$, $\alpha_{1k}=\alpha_1+lk$, and $\alpha_{ik}=\alpha_i$ for $i\geq 2$,
that is take
$V_{0k},\dots,V_{r-1 k}$ to be independent random variables such that
\begin{equation}\label{H2PD}
\begin{split}
& V_{0k} \sim Beta(1-l,\alpha_1+lk), V_{1k} \sim Beta(1+\alpha_1+l(k-1),\alpha_2), \dots, \\
& \dots V_{r-1 k} \sim Beta(1+\alpha_1+\dots+\alpha_{r-1}+l(k-1),\alpha_{r})\\
\end{split}
\end{equation}
 with $\alpha_i>0$ $i=0,\dots,r$} and $0\leq l<1$.
In this last case $S_{ik}$ has $Beta(1-l,\alpha_1+\dots+\alpha_{r-i+1}+lk)$ for every $i=1,\dots,r$.

Summarizing we have proved the following

\begin{proposition}
If \eqref{betastick2B} and \eqref{H1PD}
hold true  $\TG_i$ is a $PD(\a_1+\a_2,l,G_{0i})$ for every $i=1,\dots,r$,
while if \eqref{betastick3B} and \eqref{H2PD}
hold true  $\TG_i$ is a $PD(\alpha_1+\dots+\alpha_{r-i+1},l,G_{0i})$ for every $i=1,\dots,r$.
\end{proposition}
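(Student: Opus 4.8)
The plan is to reduce the statement to a single distributional check: that for each fixed $i$ the scalar stick variable $S_{ik}$ has exactly the law $Beta(1-l,\,\cdot+lk)$ required by the stick-breaking representation of a Poisson--Dirichlet process recalled at the start of this subsection. Granting that, the marginal identifications are immediate.

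First I would collect the structural facts. By the hypotheses of Subsection~\ref{SS:defstick}, the atom vectors $(\PPhi_k)_k$ are i.i.d.\ with law $G_0$ and independent of the weights; hence for a fixed $i$ the sequence $(\PPhi_{ik})_k$ is i.i.d.\ with law $G_{0i}$ and independent of $(W_{ik})_k$. Since the vectors $(S_k)_k$ are independent, their $i$-th coordinates $(S_{ik})_k$ are independent scalar random variables, and $\sum_{k\geq1}W_{ik}=1$ a.s.\ by assumption. Therefore $\TG_i=\sum_{k\geq1}W_{ik}\,\delta_{\PPhi_{ik}}$ is already of the form \eqref{Setu}--\eqref{stickmonodim} with base measure $G_{0i}$ and independent stick variables, and it remains only to identify the law of $S_{ik}$.

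For the first construction I would invoke \eqref{betastick2B} and \eqref{H1PD}: $S_{ik}=V_{ik}V_{0k}$ with $V_{ik}\sim Beta(1-l,\a_1)$ and $V_{0k}\sim Beta(1-l+\a_1,\,\a_2+lk)$ independent. Ordering the factors as $(U_1,U_2)=(V_{ik},V_{0k})$, the chaining hypothesis $a_2=a_1+b_1$ of Proposition~\ref{PropKrysicki} reduces to $1-l+\a_1=(1-l)+\a_1$ and holds, so $S_{ik}\sim Beta\big(1-l,\,\a_1+\a_2+lk\big)$, the stick law of a $PD(\a_1+\a_2,l,\cdot)$; hence $\TG_i\sim PD(\a_1+\a_2,l,G_{0i})$ for every $i$. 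For the second construction I would invoke \eqref{betastick3B} and \eqref{H2PD}: $S_{ik}=V_{0k}V_{1k}\cdots V_{r-i,\,k}$ is a product of $r-i+1$ independent factors, and rewriting $1+\alpha_1+\dots+\alpha_j+l(k-1)=(1-l)+\alpha_1+\dots+\alpha_j+lk$ one reads off $V_{0k}\sim Beta(1-l,\,\alpha_1+lk)$ and $V_{jk}\sim Beta\big((1-l)+\alpha_1+\dots+\alpha_j+lk,\,\alpha_{j+1}\big)$ for $1\leq j\leq r-1$. Ordering the factors as $(V_{0k},V_{1k},\dots,V_{r-i,\,k})$, the first parameter of each factor equals the sum of the two parameters of the preceding one, so Proposition~\ref{PropKrysicki} applies at every step and the product telescopes to $S_{ik}\sim Beta\big(1-l,\,\alpha_1+\dots+\alpha_{r-i+1}+lk\big)$, the stick law of a $PD(\alpha_1+\dots+\alpha_{r-i+1},l,\cdot)$; hence $\TG_i\sim PD(\alpha_1+\dots+\alpha_{r-i+1},l,G_{0i})$ for every $i$.

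I do not expect any genuine obstacle here: the argument is essentially Proposition~\ref{PropKrysicki} applied twice, and the computation is routine. The one point that needs care is the bookkeeping of the extra term $lk$, which must be carried through Proposition~\ref{PropKrysicki} without violating the chaining condition $a_{j+1}=a_j+b_j$; the parameter choices \eqref{H1PD} and \eqref{H2PD} are arranged precisely so that the factors can be ordered to satisfy this condition while the resulting product has first parameter $1-l$ and second parameter of the form $(\text{sum of }\alpha\text{'s})+lk$, matching the Poisson--Dirichlet stick law. (The requirement $\sum_{k}W_{ik}=1$ a.s.\ is part of the standing hypothesis; it also follows automatically from $0\leq l<1$.)
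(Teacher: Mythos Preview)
Your proposal is correct and follows exactly the paper's approach: the paper derives the marginal law of $S_{ik}$ by applying Proposition~\ref{PropKrysicki} to the product of betas specified in \eqref{H1PD} and \eqref{H2PD}, obtaining $S_{ik}\sim Beta(1-l,\a_1+\a_2+lk)$ and $S_{ik}\sim Beta(1-l,\alpha_1+\dots+\alpha_{r-i+1}+lk)$ respectively, and then invokes the stick-breaking characterization of the Poisson--Dirichlet process. Your write-up is in fact more explicit than the paper about verifying the chaining condition $a_{j+1}=a_j+b_j$ and about the bookkeeping of the $lk$ term, but the argument is identical.
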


\section{Slice Sampling Algorithm for Posterior Simulation}\label{S:Slice}

For posterior computation, we propose an extension of the slice sampling algorithm
introduced in \cite{walker2007,walker2011}.  For the sake of simplicity
we shall describe the sampling strategy for a  vector of  {Beta-Product DDP} with $r=2$ ($\beta_{i}\!-\!\hbox{DDP}(\XXi,G_0)$),
see Subsection \ref{SS:dirichletmarginal}.
The proposed algorithm can be easily extend to the case $r>2$ and to the Beta-Product dependent Poisson-DP.

Recall that in $\beta_{i}\!-\!\hbox{DDP}(\XXi,G_0)$
the stick variables are defined by
\[
(S_{1k},S_{2k}):=(V_{0k}V_{1k},V_{0k}V_{2k})
\]
for a sequence of independent vectors $V_k:=(V_{0k},V_{1k},V_{2k})$ with the same distribution
of $(V_0,V_1,V_2)$ and the convention $V_{2k}=1$
and $V_k:=(V_{0,k},V_{1,k})$ under (H2).

Starting from \eqref{mixturemodelDef}, the key idea of the slice sampling is to find a finite number
of  variables to be sampled. First we introduce a latent variable $u$ in such a way that
$f_i(y)$ is the marginal density of
\[
 f_i(y,u)=\sum_{k \geq 1} \J\{ u \leq W_{i k} \} \,\, \CK_i(y|\PPhi_{i k}).
\]
It is clear that given $u$, the number of components is finite. In addition
we introduce a further latent variable $d$ which indicates which of these finite number of components provides
the observation, that is
\begin{equation}\label{densitacond}
f_{i}(y,u,d) := \J\{u < W_{i,d}  \}  \CK_i(y|\PPhi_{d}).
\end{equation}
Hence, the likelihood function for the augmented variables $(y,u,d)$
is available as a simple product of terms and crucially $d$ is finite.

To be more precise we introduce the allocation variables $D_{ij}$ ($i=1,2;j=1,\dots,n_{i}$)
taking values in $\N$ and the slice variables $U_{ij}$ ($i=1,2;j=1,\dots,n_{i}$)
taking values on $[0,1]$. We shall use the notation
\[
Y_i^{(n_i)}:=(Y_{i1},\dots,Y_{in_i}), \quad  D_i^{(n_i)}:=(D_{i1},\dots,D_{in_i}), \quad
U_i^{(n_i)}:=(U_{i1},\dots,U_{in_i})
\]
and we write:
$\PPhi$ for $(\PPhi_k)_{k}$, $V$ for $(V_k)_{k}$,
$U$ for $[U_1^{(n_1)},U_2^{(n_2)}]$, $D^{(n)}$ for $[D_1^{(n_1)},D_2^{(n_2)}]$
and $Y$ for $[Y_{1}^{(n_1)},Y_2^{(n_2)}]$.

The random elements $(\PPhi,V)$ have the law already described.  While conditionally on
$(\PPhi,V)$, the random vectors
\((Y_{ij},U_{ij},D_{ij})\), $i=1,2;j=1,\dots,n_i$,
are stochastically independent with the joint density
\eqref{densitacond}.

We conclude by observing that it can be useful to put a prior distribution
even on the hyperparameters $(\a_1,\a_2)$.
The law of $\PPhi$ is assumed independent of the random vector of hyperparameters
$ \tilde \XXi:=(\tilde \a_1,\tilde \a_2)$, while the distribution
of $V_j$ depends on $ \tilde \XXi$ through (H1) or (H2), so we shall write $P\{V_j \in dv_j|\tilde \XXi\}$.

Our target is the exploration of the posterior distribution (given $Y$) of
$[\PPhi, V, U, \tilde \XXi, D]$
 by Markov
Chain Monte Carlo sampling.

 Essentially we shall use a block Gibbs sampler
which iteratively simulates $\PPhi$ given $[V, U, D,\tilde \XXi,Y]$,
$[V, U, \tilde \XXi]$ given $[D,\PPhi,Y]$ and $D$ given $[V, U, \PPhi,\tilde \XXi,Y]$.

For the one dimensional, this blocking structure  case has been introduced in \cite{Papaspiliopoulos2008} and \cite{walker2011} as an alternative and more efficient version of the original algorithm of \cite{walker2007}.
Our algorithm extends the one dimensional slice sampling of \cite{walker2011} to the multidimensional
case. This extension is not trivial as it involves generation of random samples from vectors of allocation
and slice variables of a multivariate stick-breaking process. We present an efficient Gibbs sampling
algorithm by elaborating further on the blocking strategy of \cite{walker2011}.

In order to describe in  details the full-conditionals of the above sketched block Gibbs sampler, we need
some more notation.
Define
for $i=1,2$ and $k \geq 1$,
\[
\begin{split}
& \mathcal{D}_{i,k}:=\{j \in \{1,\dots,n_i\}: D_{i,j}=k\},   \\
& A_{i,k}:=\sum_{j=1}^{n_i} \J\{D_{i,j}=k\}=card(\mathcal{D}_{i,k}), \quad
 B_{i,k}:=\sum_{j=1}^{n_i} \J\{D_{i,j}>k\} . \\
\end{split}
\]
Moreover, let
\begin{equation}\label{mstar}
D^*:=\max_{i=1,2}\max_{1 \leq j \leq n_i} D_{i,j}.
\end{equation}
In our MCMC algorithm we shall treat $V$ as three blocks of random length:
$V=(V^*,V^{**},V^{***})$,
where
\[
V^*=\{V_{k} :  k \in \mathcal{D}^* \}, \qquad V^{**}=\{V_{k} :  k \not \in \mathcal{D}^*,  k \leq  D^*\},
 \qquad V^{***}=\{V_{k} :  k >  D^*\}
\]
and
\( \mathcal{D}^*=\{k:\mathcal{D}_{1,k}\cup\mathcal{D}_{2,k}\neq \emptyset\}\).
Note that $D^*=\max\{ k \in \mathcal{D}^*\}$ and  $ |\mathcal{D}^*| \leq D^*$ almost surely.
In the following subsections we give the details of the full conditionals of the blocking Gibbs sampler,
further details on the algorithm are given in Appendix.

\subsection{The full conditional of $\PPhi$}
The atoms $\PPhi$ given $[V,D,U,\tilde \XXi,Y]$  are conditionally independent and the full conditionals are:
\begin{equation}\label{F1}
\begin{split}
P\{ \PPhi_k \in d \varphi_k & |D,U,V,\tilde \XXi,Y  \}=
P\{ \PPhi_k \in d \varphi_k  |D,Y  \}
\\& \propto G_0(d\varphi_k) \prod_{ j \in \CD_{1,k}} \CK_1 (Y_{1,j}|\varphi_{1k})
 \prod_{ j \in \CD_{2,k}} \CK_2 (Y_{2,j}|\varphi_{2k});\\
 \end{split}
\end{equation}
where $\varphi_k=(\varphi_{1k},\varphi_{2k})$. The strategy for sampling  from this full conditional
depends on the specific form of $\CK_i$ and $G_0$. In the next section we will discuss
a possible strategy for Gaussian kernels.

\subsection{The full conditional of $[V, U, \tilde \XXi]$}\label{step2}
In order to sample from the conditional distribution of $[V, U, \tilde \XXi]$ given $[D,\PPhi,Y]$  a further blocking is used:

\begin{itemize}
\item
{$[V^*,\tilde \XXi]$ given $[D,\PPhi,Y]$.} The joint conditional distribution of $[V^*,\tilde \XXi]$ given $[D,\PPhi,Y]$ is
\begin{equation}\label{F2-A}
\begin{split}
&P\{  V^*\in dv^*, \tilde \XXi \in (d \a_1,d \a_2)| Y,\PPhi,D\}=P\{ V^* \in dv^*, \tilde \XXi \in (d \a_1,
d \a_2) | D\}\\
&\hspace{2cm}\propto \prod_{k  \in \mathcal{D}^*} \frac{Q_k(v_k |D,\a_1,\a_2)}{B(\a_1+1,\a_2)B^2(1,\a_1)} dv_k \pi(d\a_1,d\a_2)  \\
\end{split}
\end{equation}
where $\pi(d\a_1,d\a_2)=P\{\tilde \XXi \in (d\a_1,d\a_2)\}$ is the prior on the concentration parameters and
\begin{equation}\label{densQ}
Q_k(v_k|D,\XXi) :=
\left \{
\begin{split}
& 
\!\! v_{0k}^{\a_1+A_{1k}+A_{2k}} (1-v_{0k})^{\a_2-1} \!\!
\prod_{i=1,2} v_{ik}^{A_{ik}}(1-v_{ik})^{\a_1-1}
(1-v_{0k}v_{ik})^{B_{ik}} \\
& \qquad \qquad \qquad
\text{under (H1) with $v_k=(v_{0k},v_{1k},v_{2k})$} \\
& 
\!\!   v_{0k}^{A_{1k}+A_{2k}}\! (1-v_{0k})^{\a_1+B_{2k}-1} v_{1k}^{\a_1+A_{1k}}
 (1-v_{1k})^{\a_2-1} (1-v_{0k}v_{1k})^{B_{1k}}  \\
& \qquad \qquad \qquad \text{under (H2) with $v_k=(v_{0k},v_{1k})$} \\
\end{split}
\right .
\end{equation}
To  sample from \eqref{F2-A}, we  iterate a two-step Metropolis-Hastings (M.-H.) within Gibbs with full conditionals
\begin{equation}\label{V*givenparameters}
P\{V^{*} \in dv^{*} |\tilde \XXi, D\}\propto \prod_{k  \in \mathcal{D}^*} Q_k(v_k |D, \tilde \XXi)dv_k
\end{equation}
and
\begin{equation}\label{parametersgivenV*}
\begin{split}
P\{ \tilde \XXi \in (d\a_1,d\a_2) | V^{*},D\}\propto &\prod_{k\in \mathcal{D}^*}
\frac{1}{B(\a_1+1,\a_2)}v_{0k}^{\a_1} (1-V_{0k})^{\a_2-1} \\ & \cdot \prod_{i=1,2}
\frac{1}{B(1,\a_1)^2}(1-V_{ik})^{\a_1-1}\pi(d\a_1,d\a_2).\\
\end{split}
\end{equation}
For the each element $(V_{0k},V_{1k},V_{2k})$ of $V^{*}$ we consider a multivariate Gaussian
random walk proposal with diagonal scale matrix $\tau^{2}I_{3}$, with $\tau^{2}=0.05$ in order to have acceptance
rates between 0.3 and 0.5 for the elements of $V^{*}$.

\item  $[V^{**},V^{***}]$ given $[D,V^*,\PPhi,\tilde \XXi,Y]$. The $V_k$ (with $k \not \in \mathcal{D}^*$)
are conditionally independent given $[D,V^*,\PPhi,\tilde \XXi,Y]$ with
$P\{V_k \in dv_k |\tilde \XXi, D,V^*\}\propto  Q_k(v_k |D, \tilde \XXi)dv_k $
if $k \leq D^*$
and $P\{ V_k \in dv | V^*,\PPhi,D,\tilde \XXi,Y\}  = P\{V_k \in dv |\tilde \XXi \}$ if $k > D^*$.
Note that if $k \not \in \mathcal{D}^*$ and $k \leq D^*$, then
$A_{i,k}=0$ in the definition of $Q_k(v_k |D, \tilde \XXi)$. In order to sample
from $Q_k(v_k |D, \tilde \XXi)$
 the same M.-H. step, used for the full conditional in \eqref{V*givenparameters},
 is employed.
\item {$U$ given $[V,D,\PPhi,\tilde \XXi,Y]$.} The slice variables $U$ are conditionally independent given $[V,D,\PPhi,\tilde \XXi,Y]$ with
\begin{equation}\label{F3}
P\{ U_{i,j} \in  d u | V,Y, \PPhi, D\}=P\{U_{i,j}\in du |V,D\}
 =   \frac{\J\{ u \leq W_{i,D_{i,j}}\}}{W_{i,D_{i,j}}} du.
 \end{equation}
\end{itemize}

\subsection{The full conditional of $D$}
The $D$'s are conditionally independent given $[V,U,\PPhi,\tilde \XXi,Y]$
with
\begin{equation}\label{F4}
P\{D_{i,j}=d |\PPhi,V,U,\tilde \XXi,Y \}
\propto \CK_i(Y_{i,j}|\PPhi_{id}) \,\,
 \J\{U_{i,j} \leq W_{i,d}  \}.
\end{equation}
Here an important remark is in order.
As in the slice sampling proposed in \cite{walker2007,walker2011}, the full conditional
\eqref{F4} samples, almost surely, from a finite number of terms. So again it is easy to sample from this full conditional.
More precisely, following \cite{walker2007}, we note that $d> N_{i,j}^*$
 ensures that $W_{i,d} < U_{i,j}$ where $N_{i,j}^*$ $(i=1,2;j=1,\dots,n_i)$ is the smallest integer such that
\begin{equation}\label{condN}
\sum_{k=1}^{N_{i,j}^*} W_{i,k}>1-U_{i,j}.
\end{equation}

\section{Illustrations}\label{sec_results}
We apply our new Beta-product dependent Dirichlet process to make inference for mixture of normals and mixture of vector autoregressive processes. The resulting  model and inference procedure have been applied on both simulated data and real data on the industrial production in the United States and the European Union.

\subsection{ $\beta_{1}\!-\!\hbox{DDP}(\XXi,H_0)$  mixtures of Gaussian distributions}\label{S.S51} $\quad$
In this section, we apply our $\beta_{1}\!-\!\hbox{DDP}(\XXi,H_0)$ Gaussian mixture model for inference
on synthetic data generated from finite Gaussian mixtures. More precisely
we assume \eqref{atoms2}-\eqref{indepStick}, (H1) and Gaussian
kernels $\CK_i( y|\varphi)$ ($i=1,2$) with means $ \mu$ and variance
$\s^2$ for $\varphi=( \mu,\s^{2})$, i.e.
\[
\CK_i( y|\varphi)
=\frac{1}{\sqrt{2 \pi} \s}
\exp \Big \{ -\frac{1}{2\s^2}(y-\mu)^2\Big \}.
\]
As base measure $H_0(d \varphi)$ we take the product of a normal
$\mathcal{N}(0,s^{-2})$ and inverse gamma $\mathcal{I}\mathcal{G}(\lambda,\lambda)$,
which are conjugate distributions for the bivariate kernel at hand.
For the vector $\XXi=(\alpha_1,\alpha_2)$ of the precision parameters of
the bivariate Dirichlet process we consider independent
gamma priors $\mathcal{G}(\zeta_{11},\zeta_{21})\mathcal{G}(\zeta_{12},\zeta_{22})$.
In summary the Bayesian non--parametric model is
\[
\begin{split}
& Y_{ij} | ( \tilde \mu_{ij},\tilde \s_{ij}^{2})  \stackrel{ind}{\sim} \mathcal{N}(\tilde \mu_{ij},\tilde \s_{ij}^2) \qquad i=1,2, j \geq 1 \\
& (\tilde \mu_{ij},\tilde \s_{ij}^2) | \TG_1,\TG_2 \stackrel{iid}{\sim} \TG_i   \quad i=1,2 \\
&  (\TG_1,\TG_2)|\tilde \XXi \sim \beta_{1}\!-\!\hbox{DDP}(\tilde \XXi,H_0) \\
&  \tilde \XXi \sim \mathcal{G}(\zeta_{12},\zeta_{22})\mathcal{G}(\zeta_{12},\zeta_{22}) \\
\end{split}
\]

The sampling procedure for $U$ and $D$ given in the previous section applies straightforwardly to this example.
We shall describe here in more details the sampling strategy for the other unknown quantities of the model.
For the sake of simplicity we will omit indicating the dependence of the full conditional on the hyperparameters.

In order to sample from the full-conditional $P\{ \PPhi_k \in d \varphi_k  | V, D,Y^{(n)},U\}$, for $k \geq 1$
we consider a two-step Gibbs sampler with full conditional distributions
\begin{equation}\label{F1.2Example}
\begin{split}
&P\{\tilde \mu_k \in d \mu_k  |\tilde \s^{-2}_{k}, D,Y^{(n)}\}\propto\\
&\quad\propto \exp\left\{-\frac{1}{2s^{-2}}\mu_{k}^{2}\right\}\prod_{m=1,2}\prod_{i\in\mathcal{D}_{m,k}} \exp\left\{-\frac{1}{2 \tilde \sigma_{k}^{2}}(Y_{mi}-\mu_{k})^{2}\right\}d\mu_{k}\\
&\quad\propto \exp\left\{-\frac{1}{2}\mu_{k}^{2}\left(s^{2}+\tilde \sigma_{k}^{-2}(A_{1,k}+A_{2,k})\right)+\mu_{k}\frac{1}{\tilde \sigma_{k}^{2}}(\eta_{1,k}^{(1)}+\eta_{2,k}^{(1)})\right\}d\mu_{k}
\end{split}
\end{equation}
and
\begin{equation}\label{F1.1Example}
\begin{split}
&P\{\tilde \s^{-2}_k \in d \s^{-2}_k  |\tilde \mu_{k}, D,Y^{(n)}\}\propto\\
&\quad\propto\exp\left\{-\lambda\s_{k}^{-2}\right\}(\s_{k}^{-2})^{\lambda+1}\prod_{m=1,2}\prod_{i\in\mathcal{D}_{m,k}} (\s^{-2}_{k})^{1/2}\exp\left\{-\frac{1}{2\sigma_{k}^{2}}(Y_{mi}-\tilde \mu_{k})^{2}\right\}d\s^{-2}_{k}\\
&\quad\propto\exp\left\{-(\lambda+\frac{1}{2}(\eta_{1,k}^{(2)}+\eta_{2,k}^{(2)}))\s_{k}^{-2}\right\}(\s_{k}^{-2})^{\lambda+\frac{1}{2}(A_{1,k}+A_{2,k})-1}d\s^{-2}_{k}
\end{split}
\end{equation}
which are proportional to the density function of a normal
\begin{equation}
\mathcal{N}\left(\frac{\tilde \sigma_{k}^{-2}(\eta_{1,k}^{(1)}+\eta_{2,k}^{(1)})}{s^{2}+\tilde \sigma_{k}^{-2}(A_{1,k}+A_{2,k})},\frac{1}{s^{2}+\tilde \sigma_{k}^{-2}(A_{1,k}+A_{2,k})}\right)\\
\end{equation}
and an inverse gamma
\begin{equation}
\mathcal{I}\mathcal{G}\left(\lambda+\frac{1}{2}(A_{1,k}+A_{2,k}),\lambda+\frac{1}{2}(\eta_{1,k}^{(2)}+\eta_{2,k}^{(2)})\right)
\end{equation}
respectively, where $A_{m,k}$, $m=1,2$ have been defined in the previous section and
\begin{equation}
\eta_{m,k}^{(1)}=\sum_{i\in\mathcal{D}_{m,k}}Y_{im}\quad\hbox{and}\quad\eta_{m,k}^{(2)}=\sum_{i\in\mathcal{D}_{m,k}}(Y_{im}-\tilde \mu_{k})^{2}.
\end{equation}

A sample from the conditional joint distribution of the precision parameters and the stick breaking elements can be obtained
following the blocking scheme described in Subsection \ref{step2}.
Since we assume gamma priors, $\mathcal{G}(\zeta_{11},\zeta_{21})$ and $\mathcal{G}(\zeta_{12},\zeta_{22})$ for
$\alpha_1$ and $\alpha_2$ respectively, \eqref{parametersgivenV*} becomes
\begin{equation}
\begin{split}
P\{ &\tilde \XXi \in (d\a_1,d\a_2)  | V^{*},D\} \propto \\ & \frac{1}{B(\a_1+1,\a_2)^{d_1}}\frac{1}{B(1,\a_1)^{d_2}}\a_1^{\zeta_{11}-1}\exp\left\{-\a_1\bar{\zeta}_{21}\right\}
\a_2^{\zeta_{12}-1}\exp\left\{-\a_2\bar{\zeta}_{22}\right\}d\a_1d\a_2 \\
\end{split}
\end{equation}
where $\bar{\zeta}_{21}=\zeta_{21}-\sum_{k\in\mathcal{D}}(\log(V_{0k})+\log(1-V_{1k})+\log(1-V_{2k}))$
and $\bar{\zeta}_{22}=\zeta_{22}-\sum_{k\in\mathcal{D}}\log(1-V_{0k})$ and $d_1=card(\mathcal{D}^*)$ and
$d_2=2d_1$. We simulate from the full conditional by a M.-H. step. We considered two alternative proposals.
First we assume independent proposals. At the $j$-th iteration, given $\XXi^{(j-1)}$, we simulate
\begin{equation}
\a_1^{(*)}\sim\mathcal{G}a(\zeta_{11},\zeta_{21}),\quad \a_2^{(*)}\sim\mathcal{G}a(\zeta_{12},\zeta_{22})
\end{equation}
and accept with probability
\begin{equation}
\min\left\{1,\frac{B(\a_1^{(j-1)}+1,\a_2^{(j-1)})^{d_1}B(1,\a_1^{(j-1)})^{d_2}}{B(\a_1^{(*)}+1,\a_2^{(*)})^{d_1}B(1,\a_1^{(*)})^{d_2}}\right\}.
\end{equation}
In our experiments this kind of proposal turns out to be highly inefficient and the M.-H. exhibits low acceptance rates, thus
we consider a gamma random walk proposal. At the $j$-th iteration of the algorithm, given the previous value $(\a_1^{(j-1)},\a_2^{(j-1)})$ of the chain,
we simulate
\begin{equation}
\a_1^{(*)}\sim\mathcal{G}a(\kappa(\a_1^{(j-1)})^{2},\kappa \a_1^{(j-1)}),\quad \a_2^{(*)}\sim\mathcal{G}a(\kappa(\a_2^{(j-1)})^{2},\kappa \a_2^{(j-1)})
\end{equation}
where $\kappa$ represents the scale of random walk.  The proposal is accepted with probability
\begin{equation}
\min\left\{1,\frac{P\{ (\a_1^{(*)},\a_2^{(*)})| V^{*},D\}}{P\{\a_1^{(j-1)},\a_2^{(j-1)}| V^{*},D\}}\frac{g(\a_1^{(j-1)}|\a_1^{(*)})}{g(\a_1^{(*)}|\a_1^{(j-1)})}
\frac{g(\a_2^{(j-1)}|\a_2^{(*)})}{g(\a_2^{(*)}|\a_2^{(j-1)})}\right\}
\end{equation}
where $$g(y|x)=\frac{1}{\Gamma(\kappa x^{2})}(\kappa x)^{\kappa x^{2}}y^{\kappa x^{2}-1}e^{-\kappa x y}$$
is the conditional density of the gamma random walk proposals. We set the scale parameter $\kappa$ in order to have
acceptance rates close to 0.5.

We simulate $n=50$ independent vectors, $(Y_{1,j},Y_{2,j})$ with $j=1,\ldots,n$, of observations.
The component of the vectors $(Y_{1,j},Y_{2,j})$ are independent and alternatively follow  one of these models.
\begin{itemize}
\item The same three-component mixture of normals (model Mix1)
\begin{eqnarray*}
&&\frac{1}{3}\mathcal{N}(-10,1)+\frac{1}{3}\mathcal{N}(0,1)+\frac{1}{3}\mathcal{N}(10,1)\\
&&\frac{1}{3}\mathcal{N}(-10,1)+\frac{1}{3}\mathcal{N}(0,1)+\frac{1}{3}\mathcal{N}(10,1)
\end{eqnarray*}

\item Two different mixtures with two common components (model Mix2)
\begin{eqnarray*}
&&\frac{1}{4}\mathcal{N}(0,0.5)+\frac{1}{4}\mathcal{N}(3,0.25)+\frac{1}{4}\mathcal{N}(2,0.25)+\frac{1}{4}\mathcal{N}(5,0.5)\\
&&\frac{1}{4}\mathcal{N}(0,0.5)+\frac{1}{4}\mathcal{N}(3,0.25)+\frac{1}{4}\mathcal{N}(-3,0.25)+\frac{1}{4}\mathcal{N}(7,0.5)
\end{eqnarray*}

\item The same three-component mixture of normals with different component probabilities (model Mix3)
\begin{eqnarray*}
&&\frac{1}{3}\mathcal{N}(-10,1)+\frac{1}{3}\mathcal{N}(0,1)+\frac{1}{3}\mathcal{N}(10,1)\\
&&\frac{1}{6}\mathcal{N}(-10,1)+\frac{4}{6}\mathcal{N}(0,1)+\frac{1}{6}\mathcal{N}(10,1)
\end{eqnarray*}
\end{itemize}
The simulated set of data considered in the experiments are given in Fig. \ref{data}. In the left column
is the histogram of the first component of the set of data and in the right  column is the histogram of the second component.
\begin{figure}[t]
\begin{centering}
\begin{tabular}{c}
\includegraphics[height=4cm,width=7cm, angle=0, clip=false]{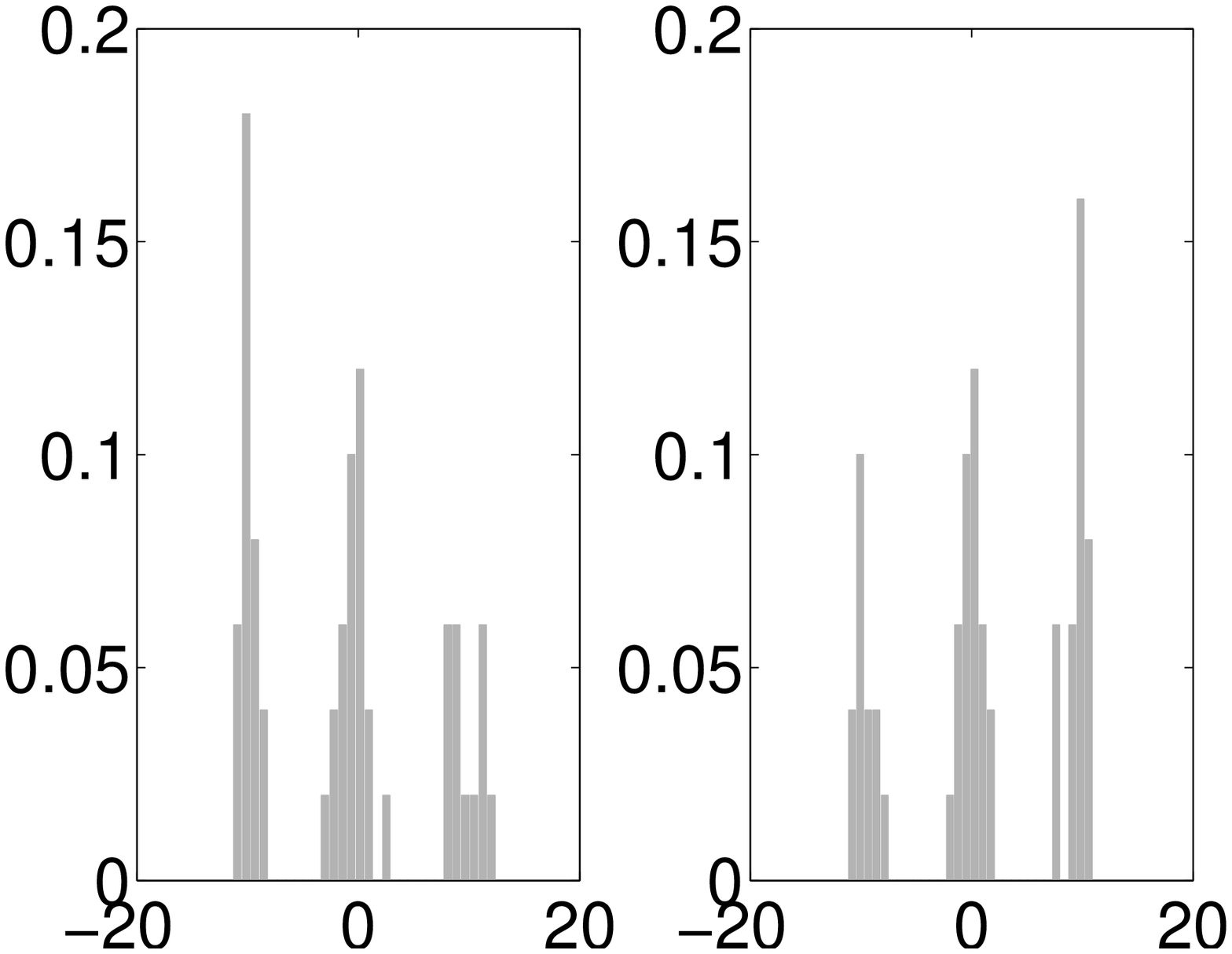}\\
\includegraphics[height=4cm,width=7cm, angle=0, clip=false]{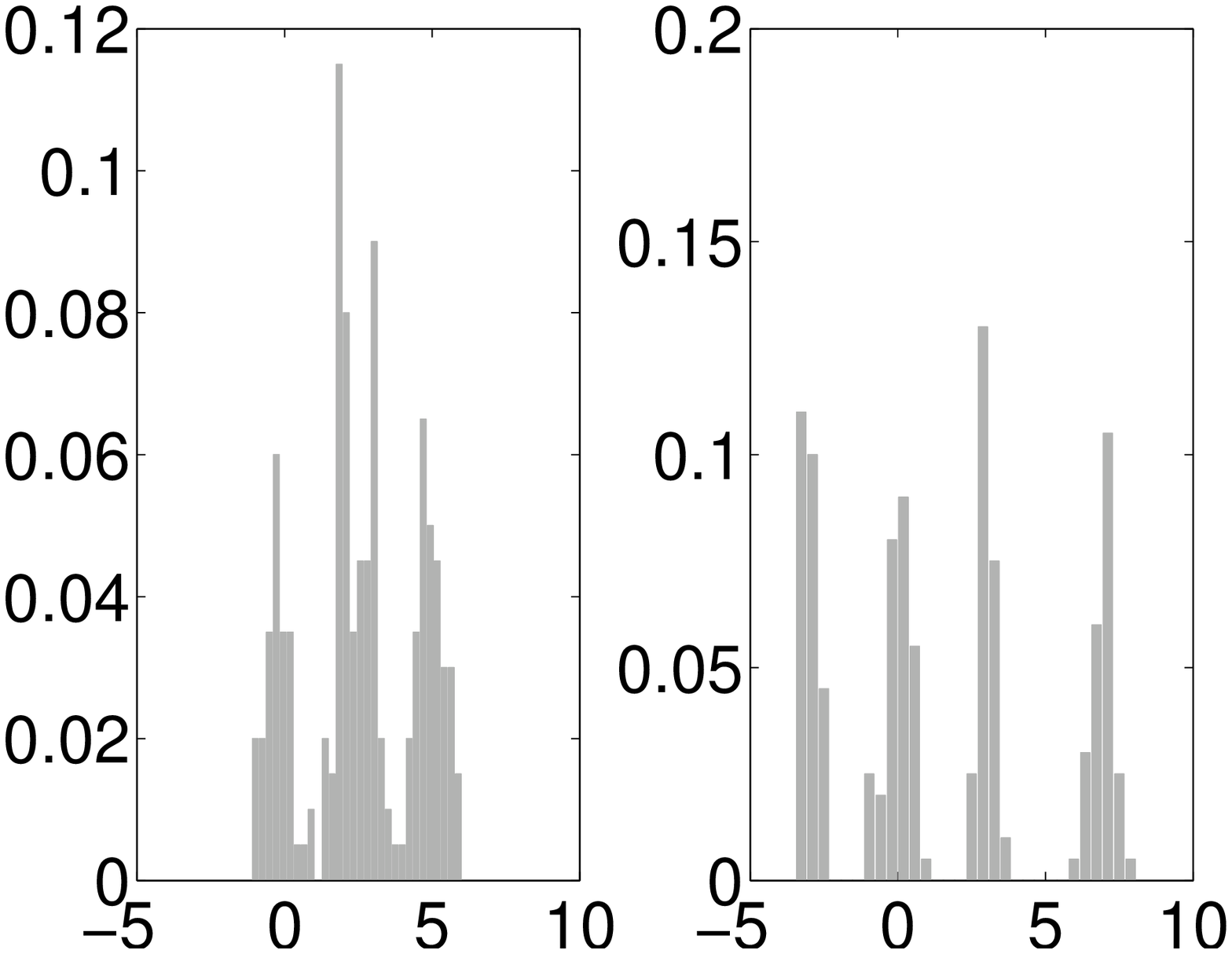}\\
\includegraphics[height=4cm,width=7cm, angle=0, clip=false]{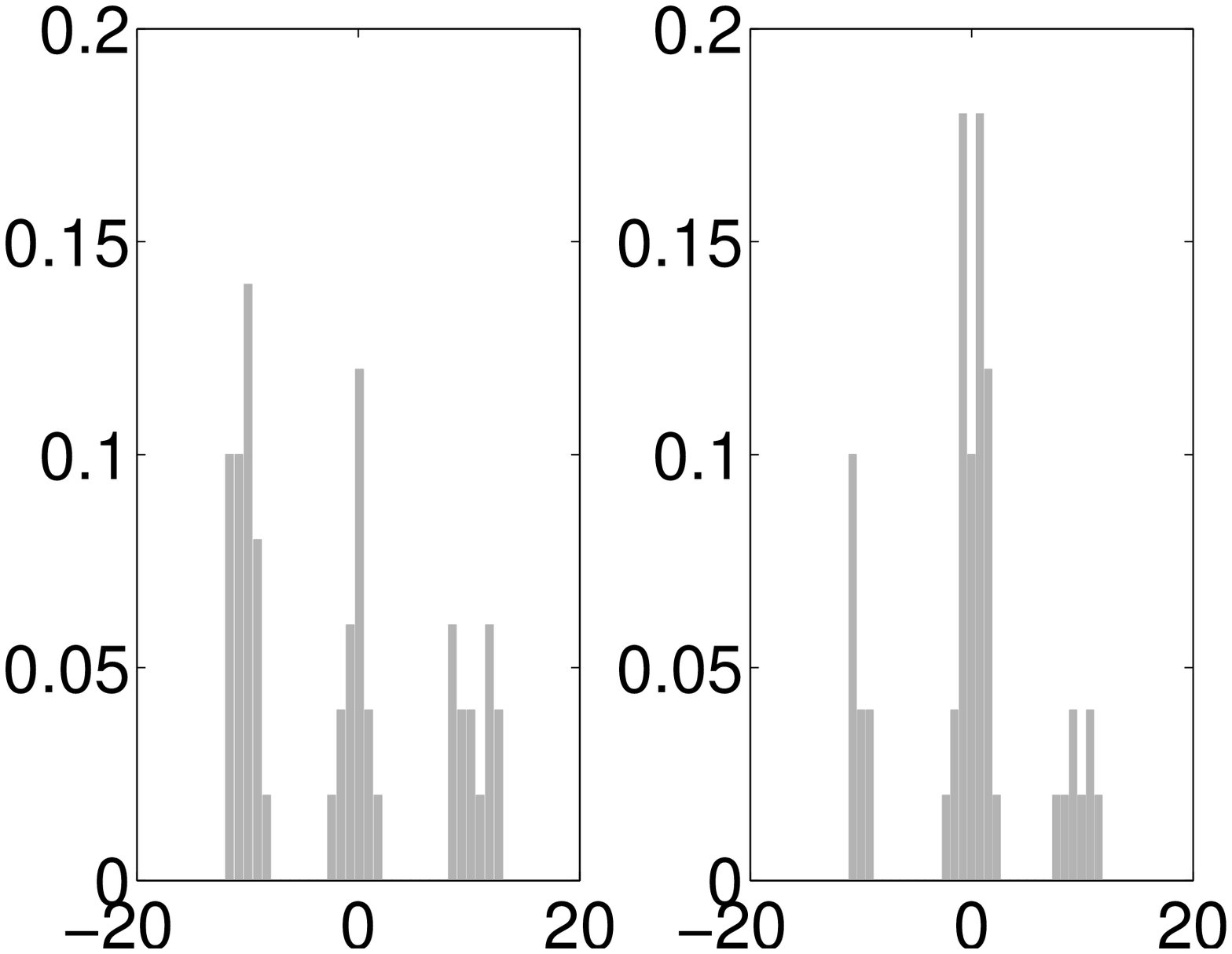}
\end{tabular}
\caption{Sample of data for the first (left) and second (right) component generated under different model assumptions: Mix1 (first row), Mix2 (second row), and Mix3 (third row).}\label{data}
\end{centering}
\end{figure}
Then we estimate the  $\beta_{1}\!-\!\hbox{DDP}(\XXi,H_0)$ mixture model on the different set of data. In the inference exercise, we choose a
non--informative prior specification for the mean and precision parameters of the base measure and set $s^{2}=0.1$, $\lambda=0.5$ (see for example \cite{walker2007}). For the concentration parameters $(\alpha_1,\alpha_2)$ of the stick-breaking components, we follow \cite{walker2011} and consider two alternative specifications of the priors: weakly informative (WI) prior and strongly informative (SI) prior.
For the WI case, the hyperparameters setting is $(\zeta_{1j}=0.01,\zeta_{2j}=0.01)$, for $j=1,2$, in all the Mix1, Mix2 and Mix3 experiments. This setting corresponds to diffuse priors on the concentration parameters, with prior means $\mathbb{E}(\tilde \a_1)=\mathbb{E}(\tilde \a_2)=1$ and variances $\mathbb{V}(\tilde \a_1)=\mathbb{V}(\tilde \a_2)=100$, and to a low prior dependence level ($Cor(\TG_{1},\TG_{2})=0.6902$) between the random marginal densities. In our WI setup, a small amount of information exchange is allowed a priori, between the two marginal densities and the posterior level of information exchanged is heavily affected by the empirical evidence.
For the SI case, a large amount of information exchange is desired instead between the two sets of data. In the SI, we set $(\zeta_{11}=100,\zeta_{21}=400)$ and $(\zeta_{12}=100,\zeta_{22}=200)$ in the Mix1 and Mix3 examples and $(\zeta_{11}=10,\zeta_{21}=100)$ and $(\zeta_{12}=200,\zeta_{22}=100)$ in the Mix2 example. These settings correspond to a very concentrated prior and a high prior dependence level between the two marginal densities $Cor(\TG_{1},\TG_{2})=0.7609$ and $Cor(\TG_{1},\TG_{2})=0.9343$ respectively.

For both the WI and SI settings, the Gibbs sampler, presented in the previous section, was run for 20,000 iterations.
The raw output of the MCMC chain  for the number of clusters is given in Fig. \ref{clust}. For the estimation of the number of clusters, a burn-in period of 10,000 samples was discarded.
At each Gibbs iteration from 10,000 onwards, a sample $(Y_{1,n+1},Y_{2,n+1})$ from the predictive was taken. The solid lines in Figure \ref{pred} show the estimated predictive
distributions using 10,000 samples from the Gibbs and the original sets of data.

\begin{figure}[p]
\begin{centering}
\begin{tabular}{cc}
\textbf{Weakly Informative Prior (WI)}&\textbf{Strongly Informative Prior (SI)}\\
\includegraphics[height=2.5cm,width=5.5cm, angle=0, clip=false]{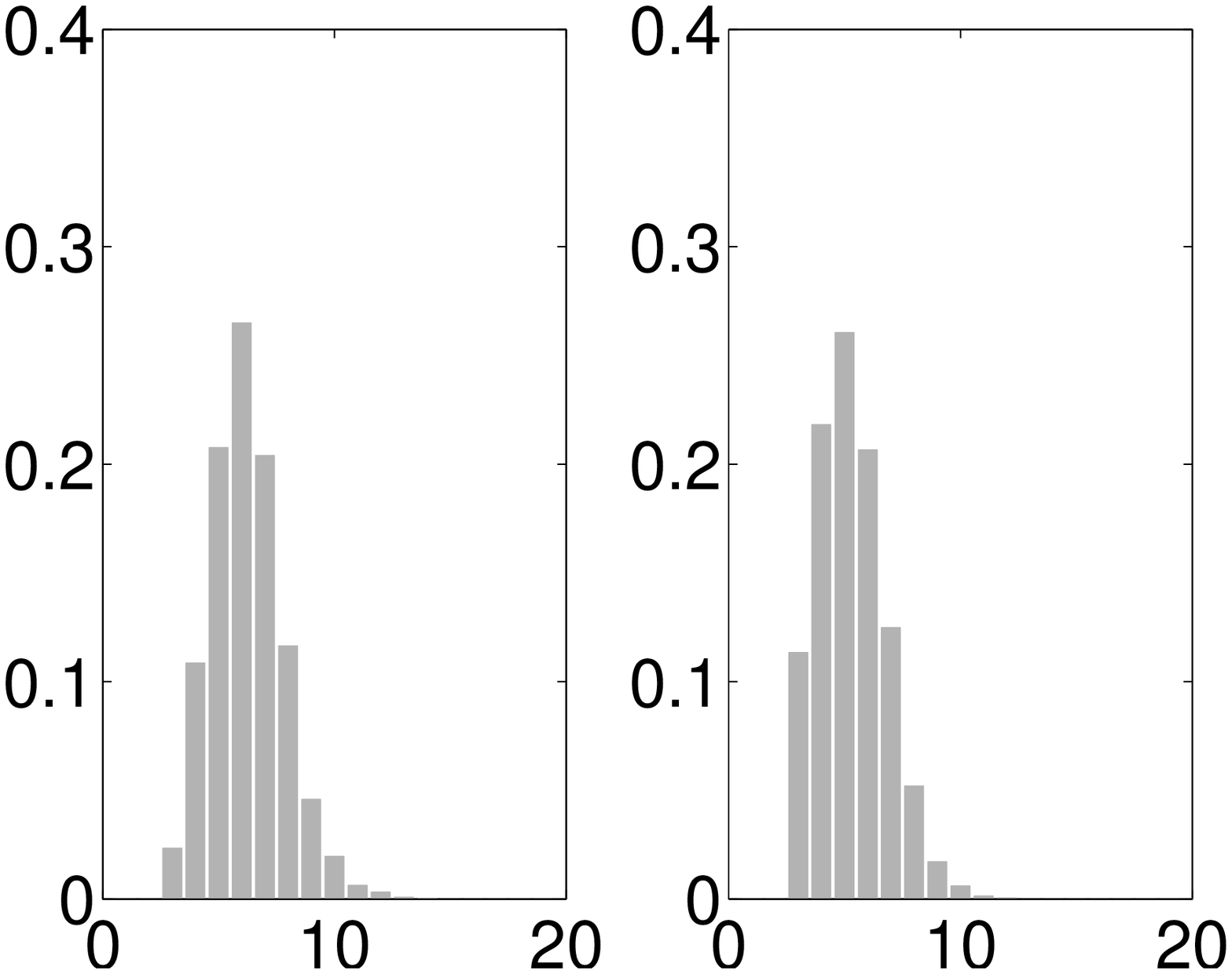}&\includegraphics[height=2.5cm,width=5.5cm, angle=0, clip=false]{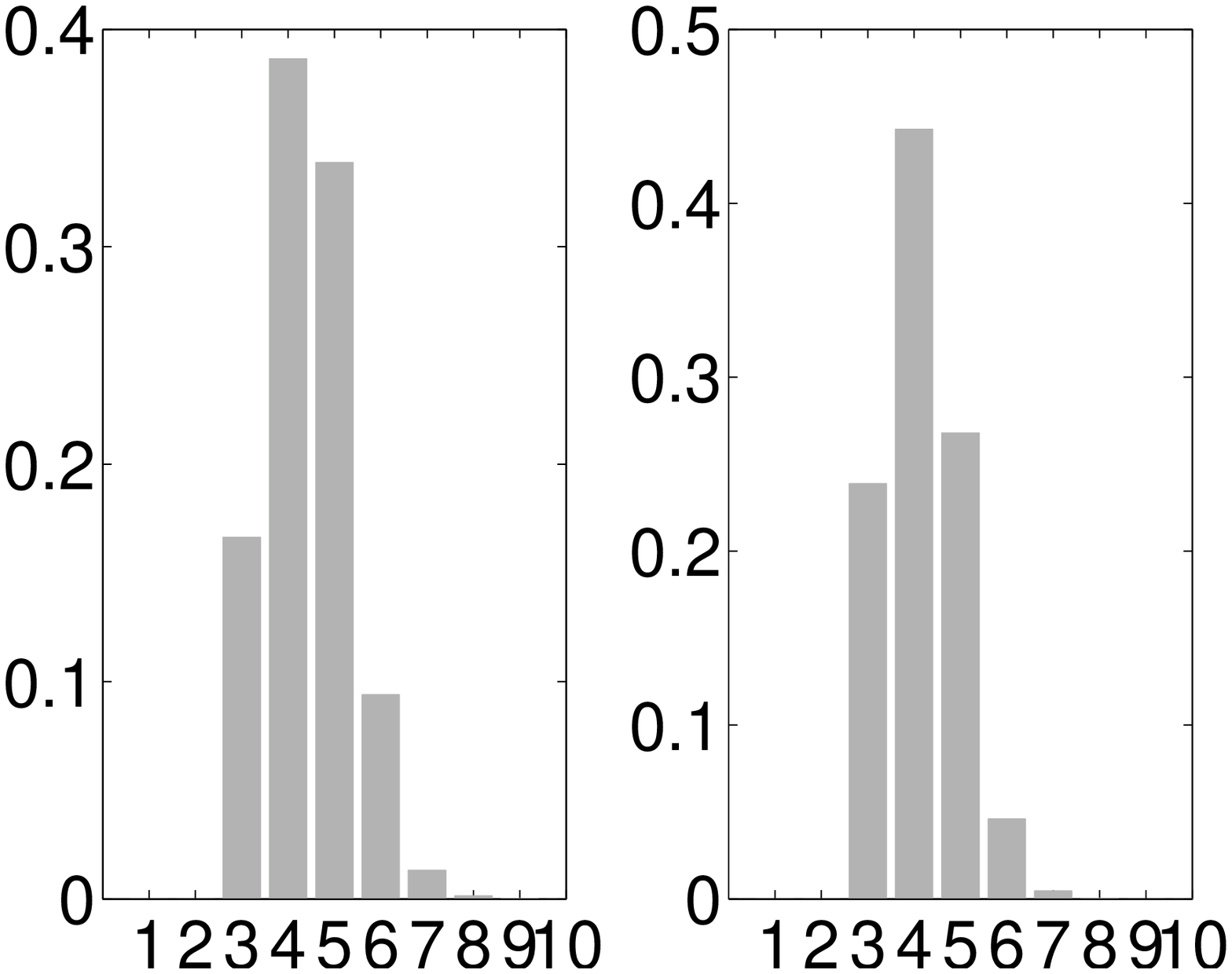}\\
\includegraphics[height=2.5cm,width=5.5cm, angle=0, clip=false]{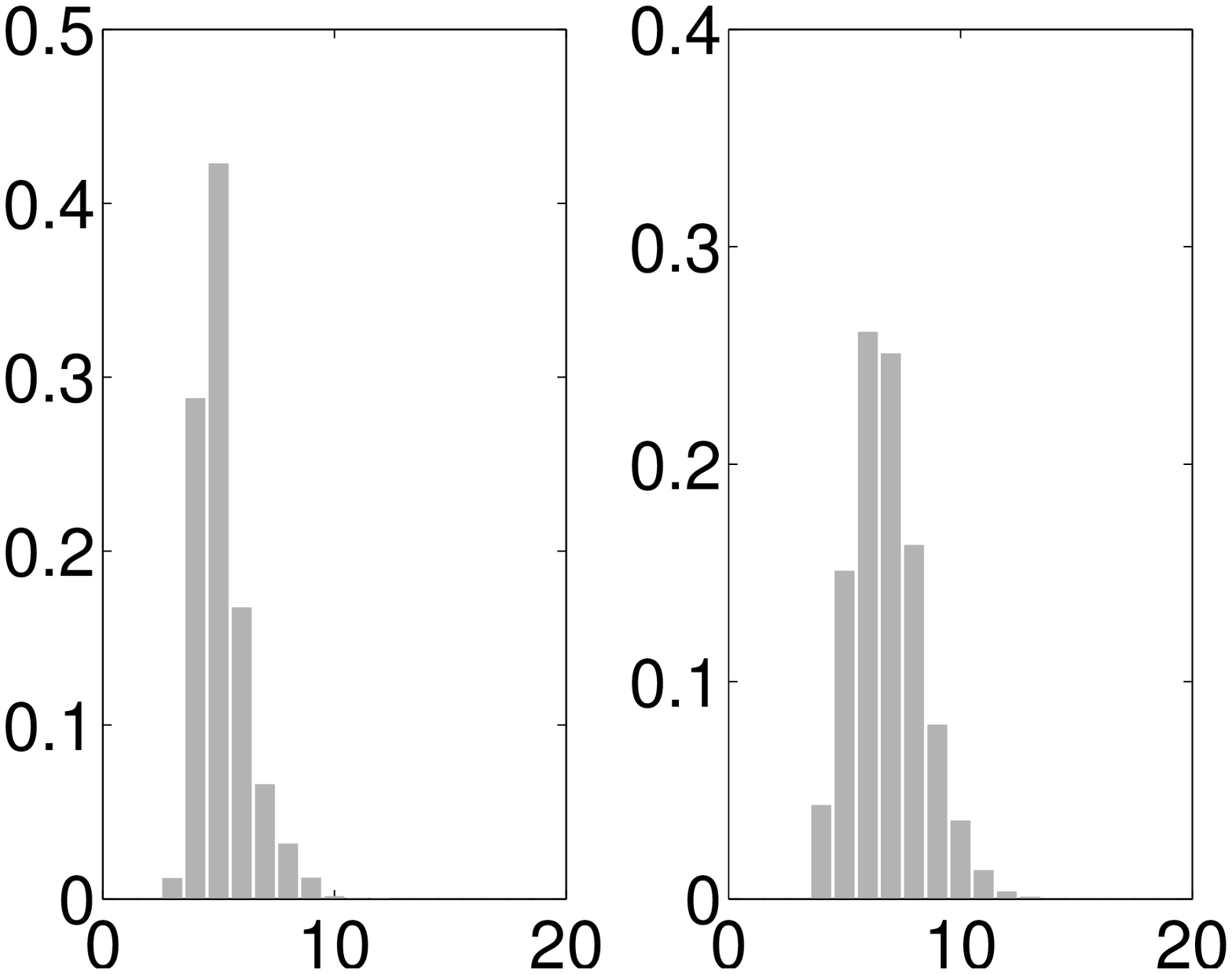}&\includegraphics[height=2.5cm,width=5.5cm, angle=0, clip=false]{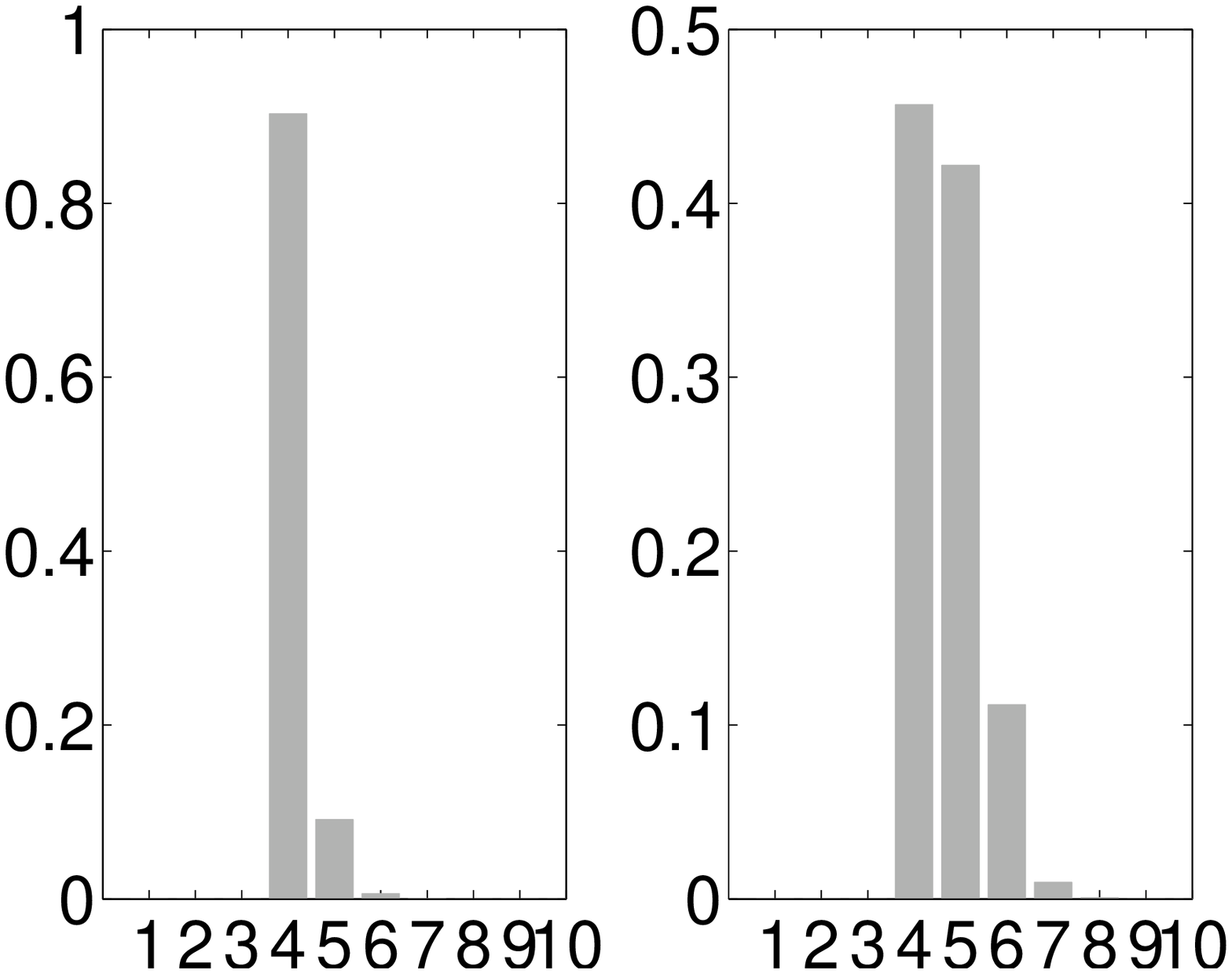}\\
\includegraphics[height=2.5cm,width=5.5cm, angle=0, clip=false]{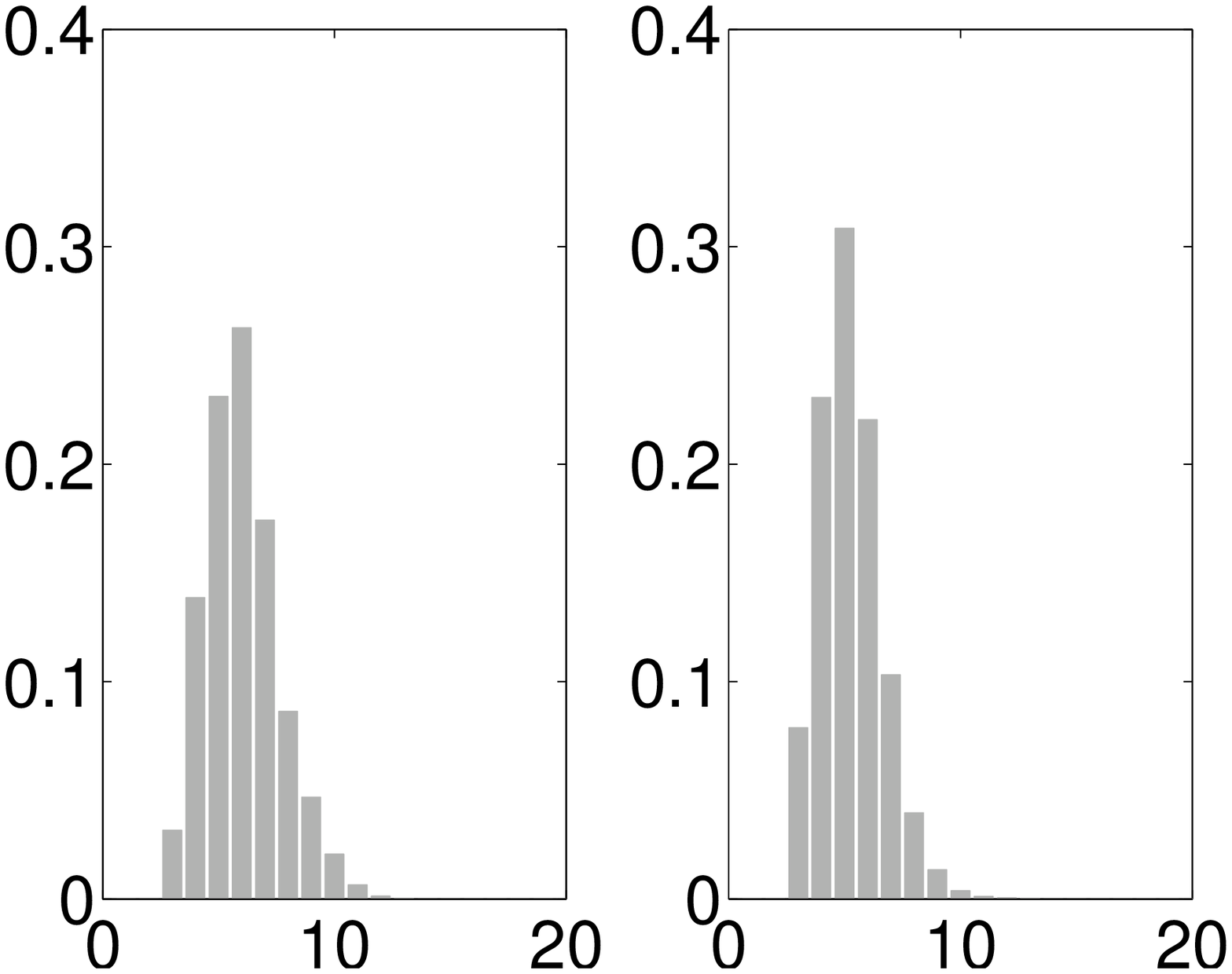}&\includegraphics[height=2.5cm,width=5.5cm, angle=0, clip=false]{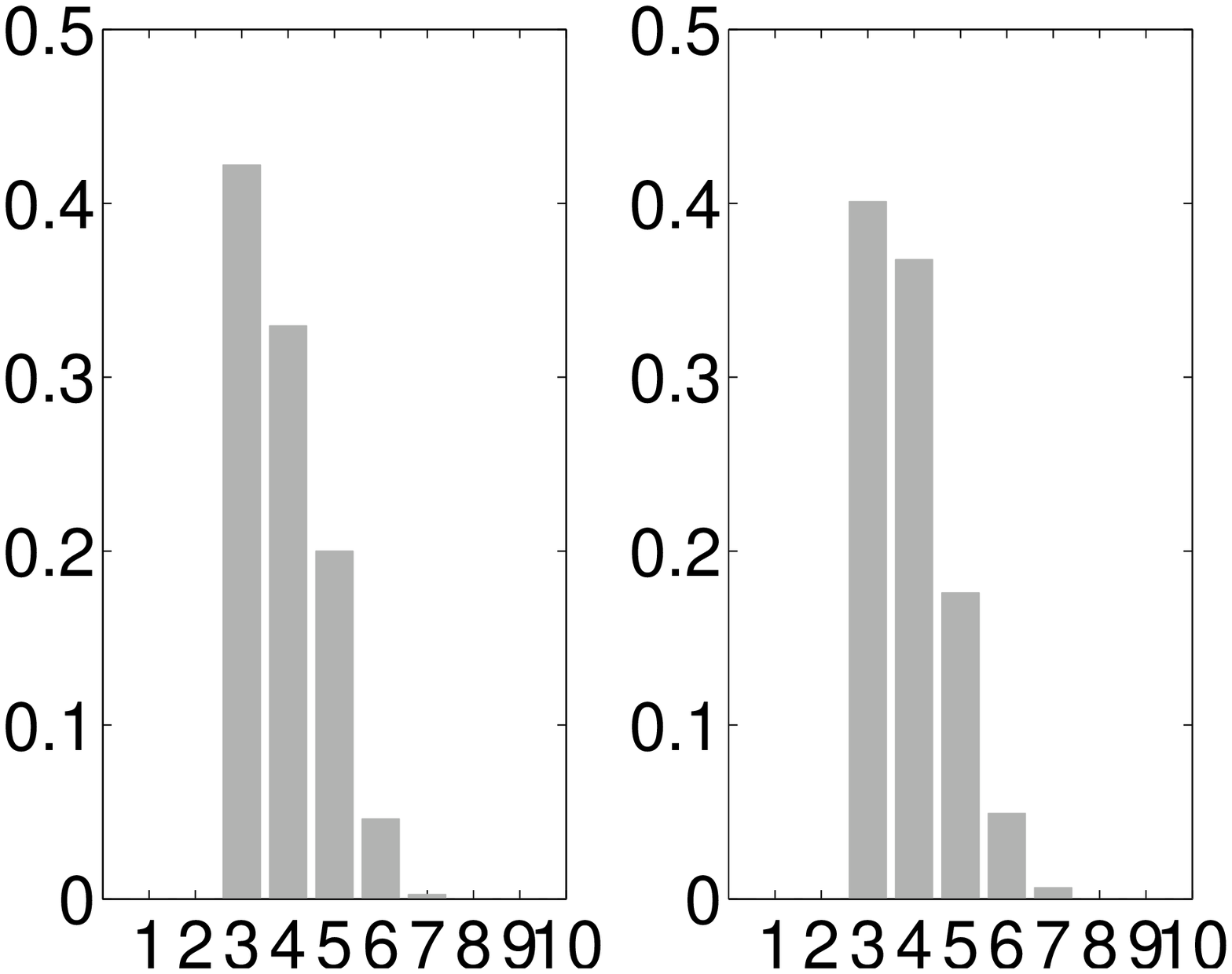}\\
\end{tabular}
\caption{Number of clusters for different prior settings (panels WI and SI), for the first (left column) and
second component (right column) and for the different models: Mix1 (first row), Mix2 (second row)
and Mix3 (third row).}\label{clust}
\end{centering}
\end{figure}

\begin{figure}[p]
\begin{centering}
\begin{tabular}{cc}
\textbf{Weakly Informative Prior (WI)}&\textbf{Strongly Informative Prior (SI)}\\
\includegraphics[height=2.5cm,width=5.5cm, angle=0, clip=false]{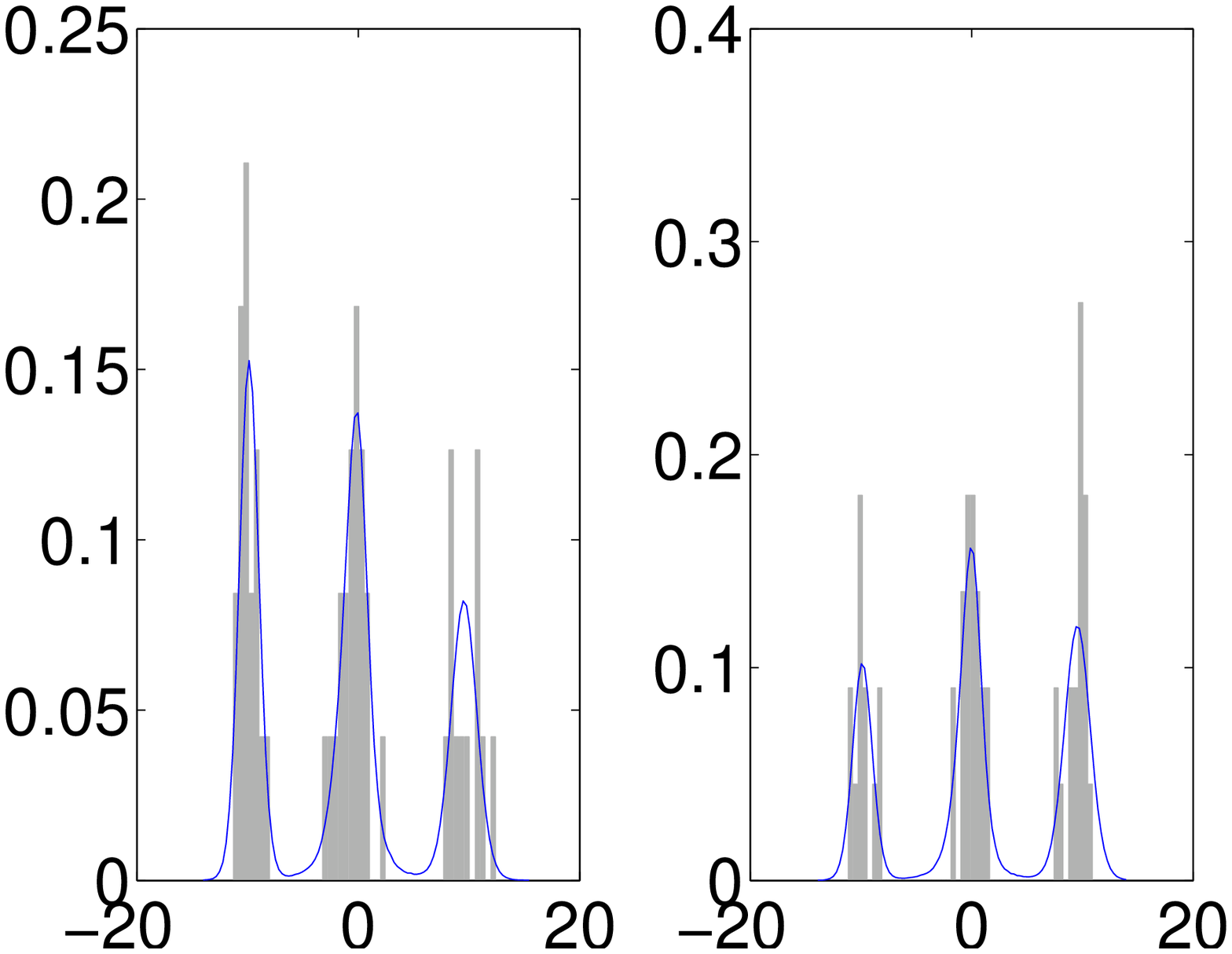}&\includegraphics[height=2.5cm,width=5.5cm, angle=0, clip=false]{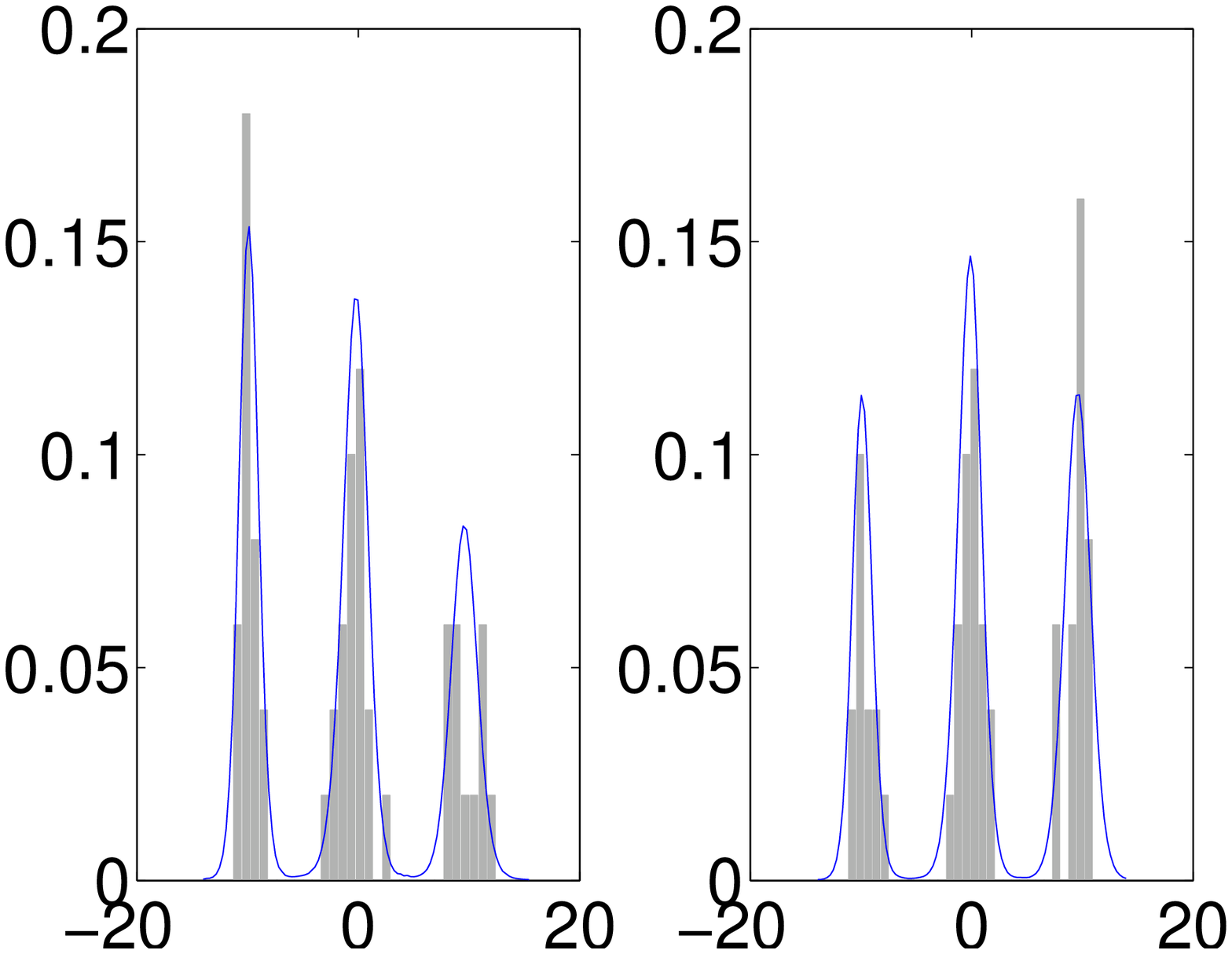}\\
\includegraphics[height=2.5cm,width=5.5cm, angle=0, clip=false]{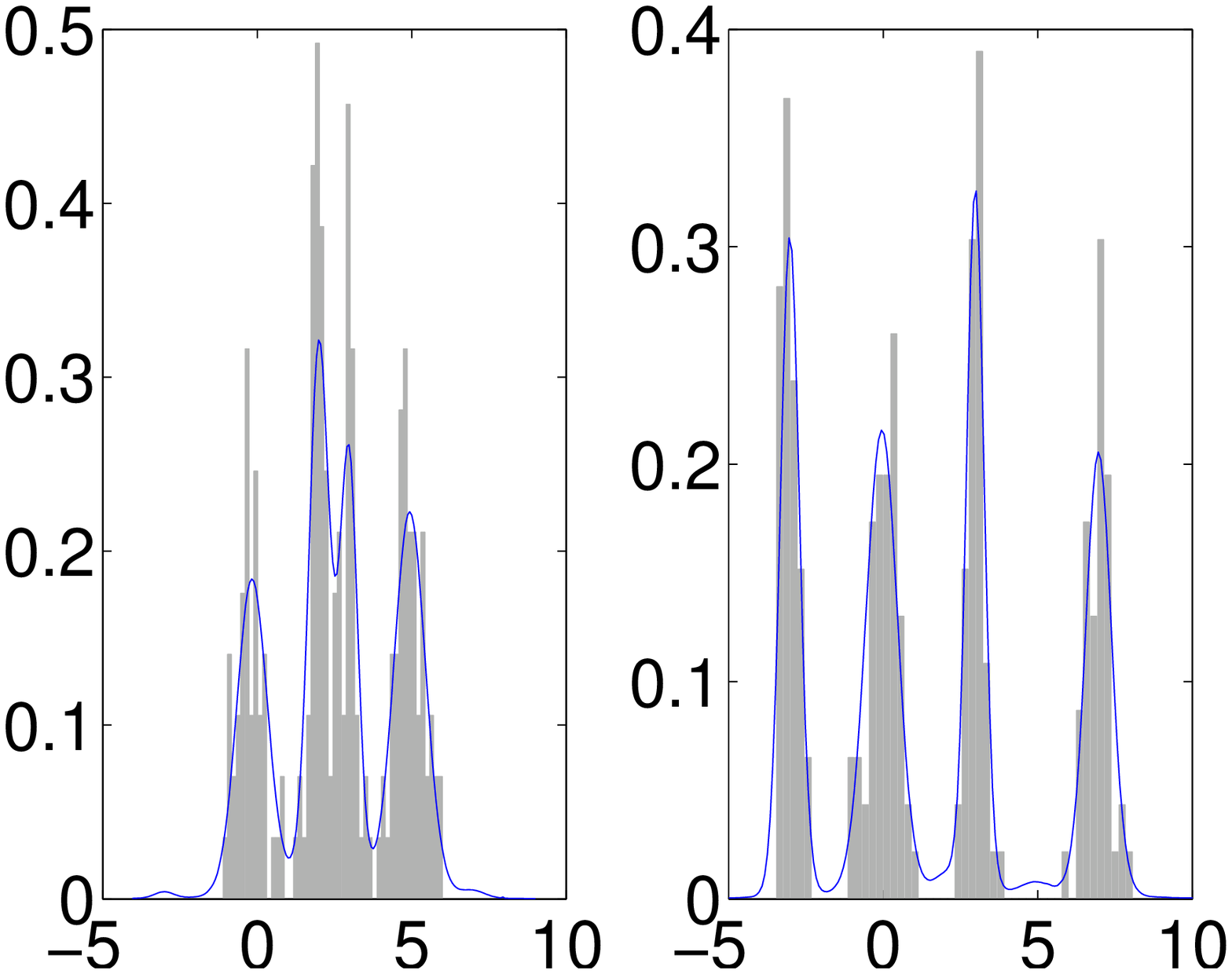}&\includegraphics[height=2.5cm,width=5.5cm, angle=0, clip=false]{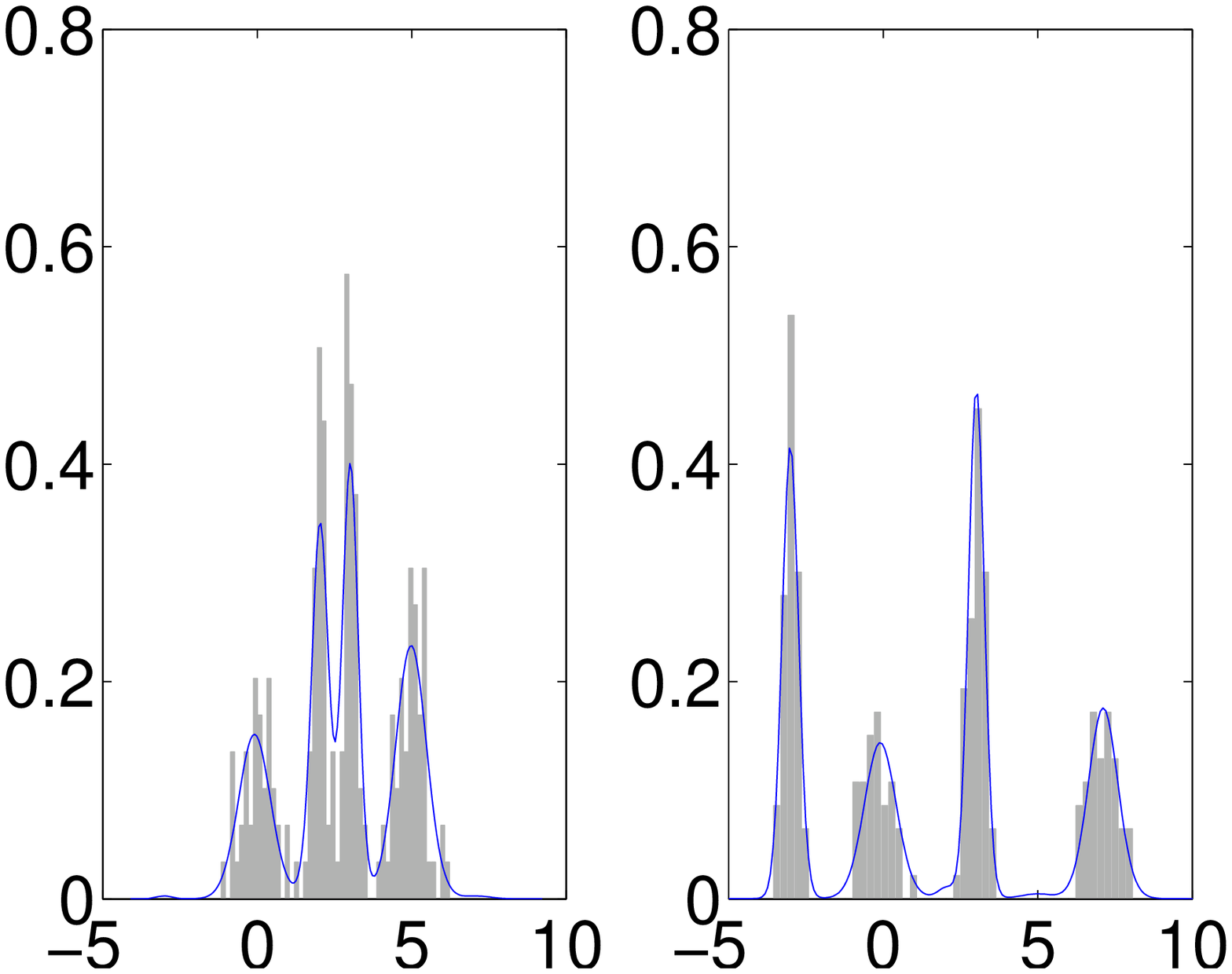}\\
\includegraphics[height=2.5cm,width=5.5cm, angle=0, clip=false]{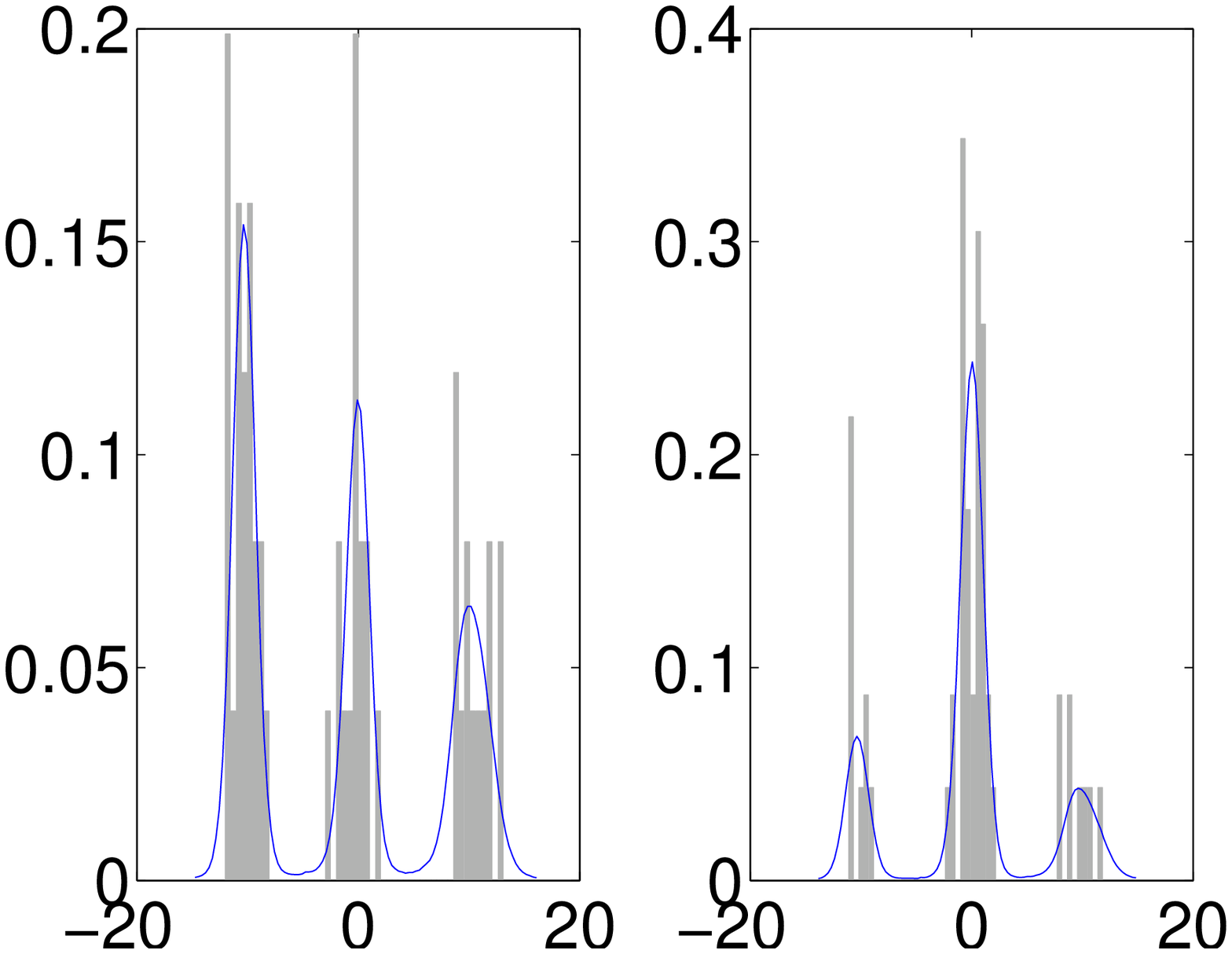}&\includegraphics[height=2.5cm,width=5.5cm, angle=0, clip=false]{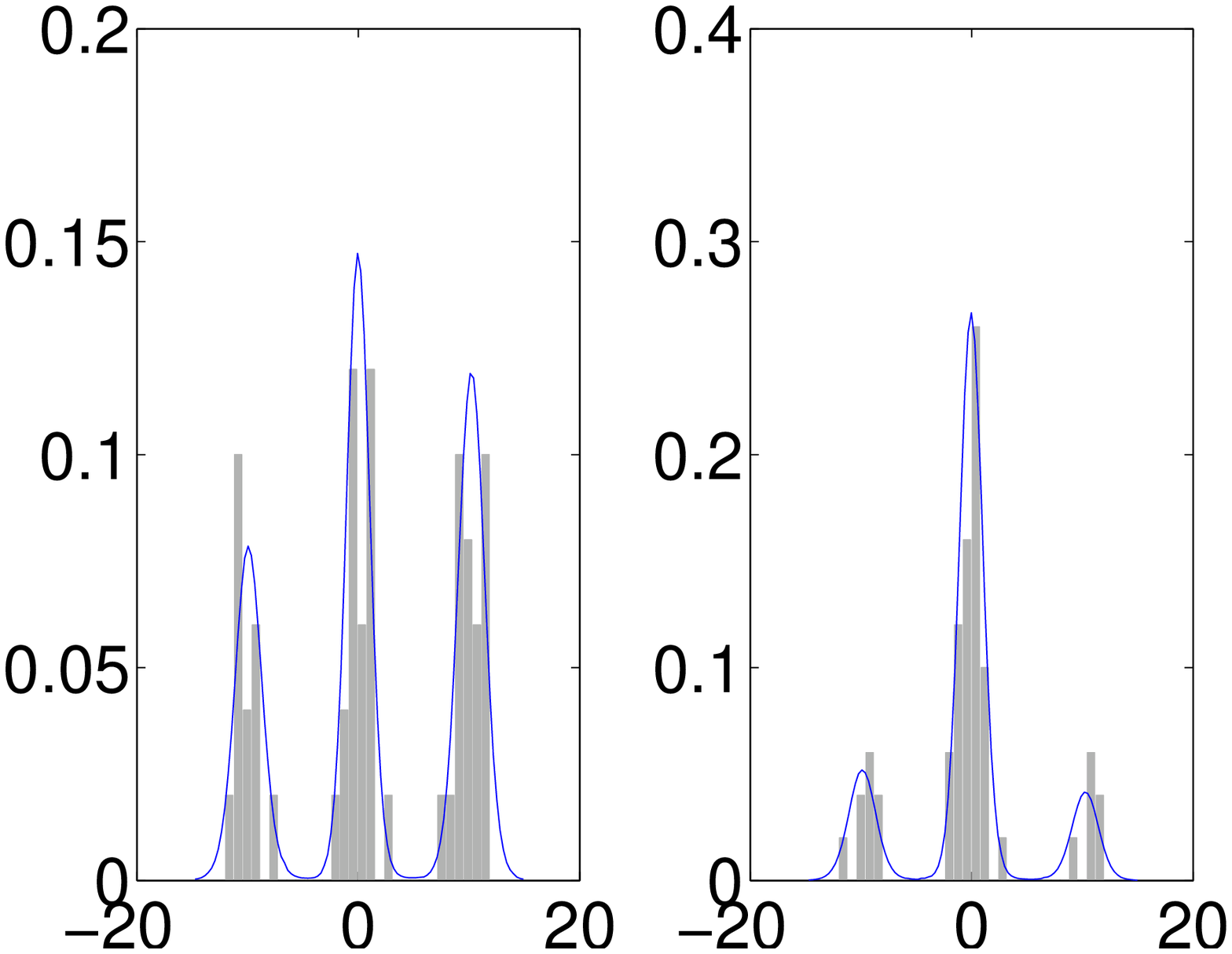}\\
\end{tabular}
\caption{Approximated predictive density (solid lines) for the two settings (panels WI and SI) and for the first (left) and second (right) component of the
data from models Mix1 (first row), Mix2 (second row), and Mix3 (third row).}\label{pred}
\end{centering}
\end{figure}

Fig. \ref{param} shows the raw output and the ergodic average of the MCMC chain for the
parameters $\alpha_1$ and $\alpha_2$ in the WI and SI prior settings.

\begin{figure}[t]
\begin{centering}
\begin{tabular}{cc}
\textbf{Weakly Informative Prior (WI)}&\textbf{Strongly Informative Prior (SI)}\\
\includegraphics[height=3cm,width=6cm, angle=0, clip=false]{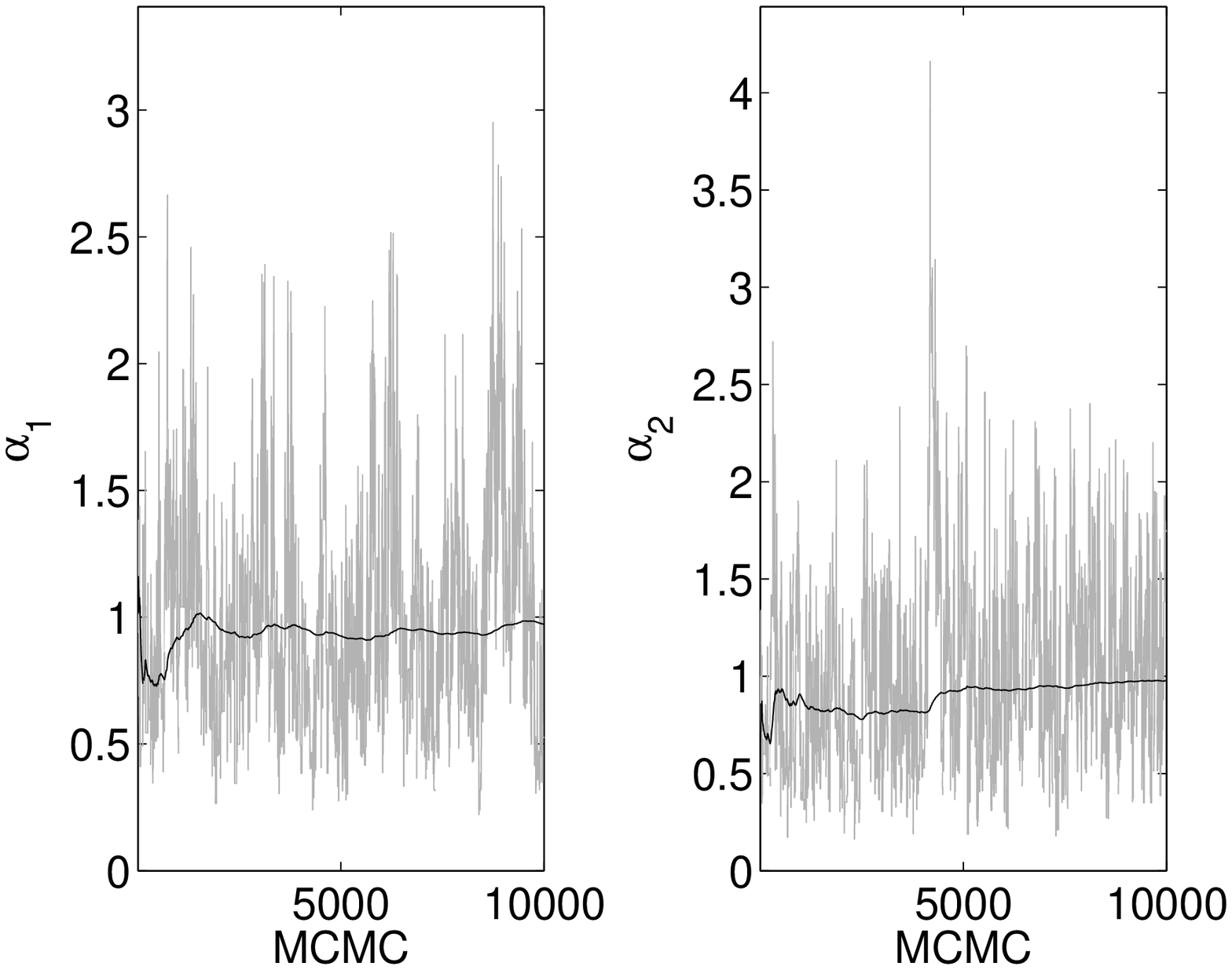}&\includegraphics[height=3cm,width=6cm, angle=0, clip=false]{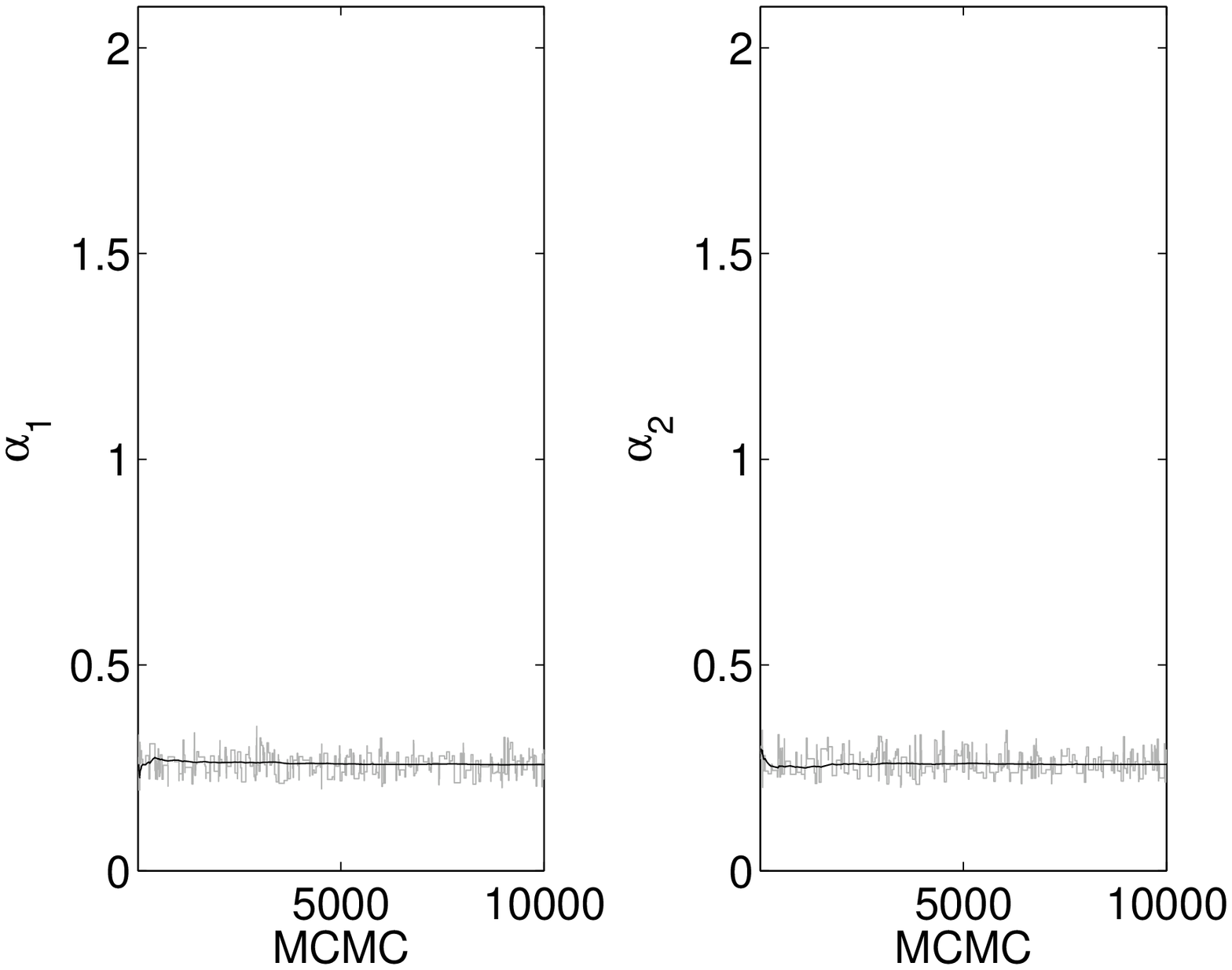}\\
$\,$&\\
\includegraphics[height=3cm,width=6cm, angle=0, clip=false]{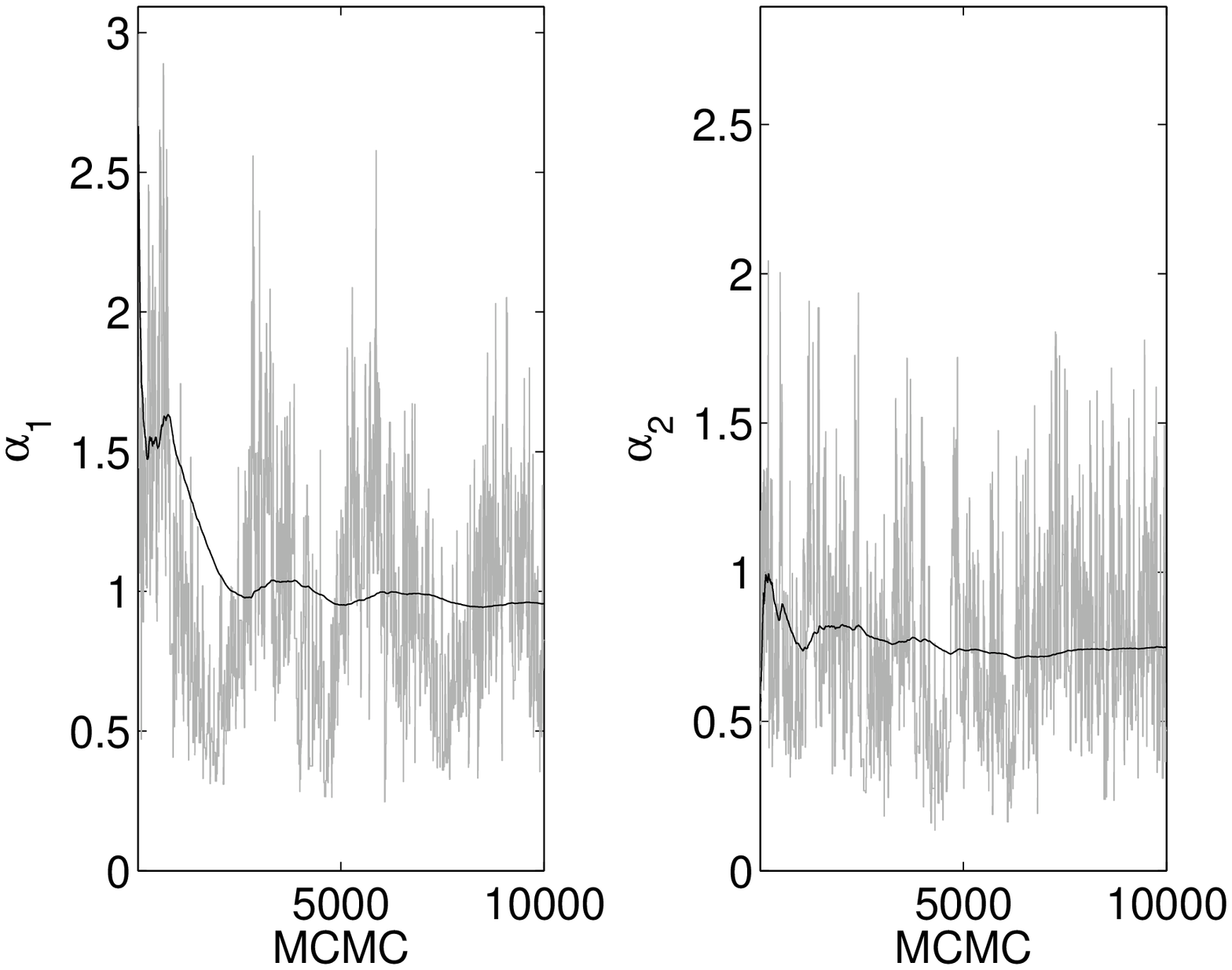}&\includegraphics[height=3cm,width=6cm, angle=0, clip=false]{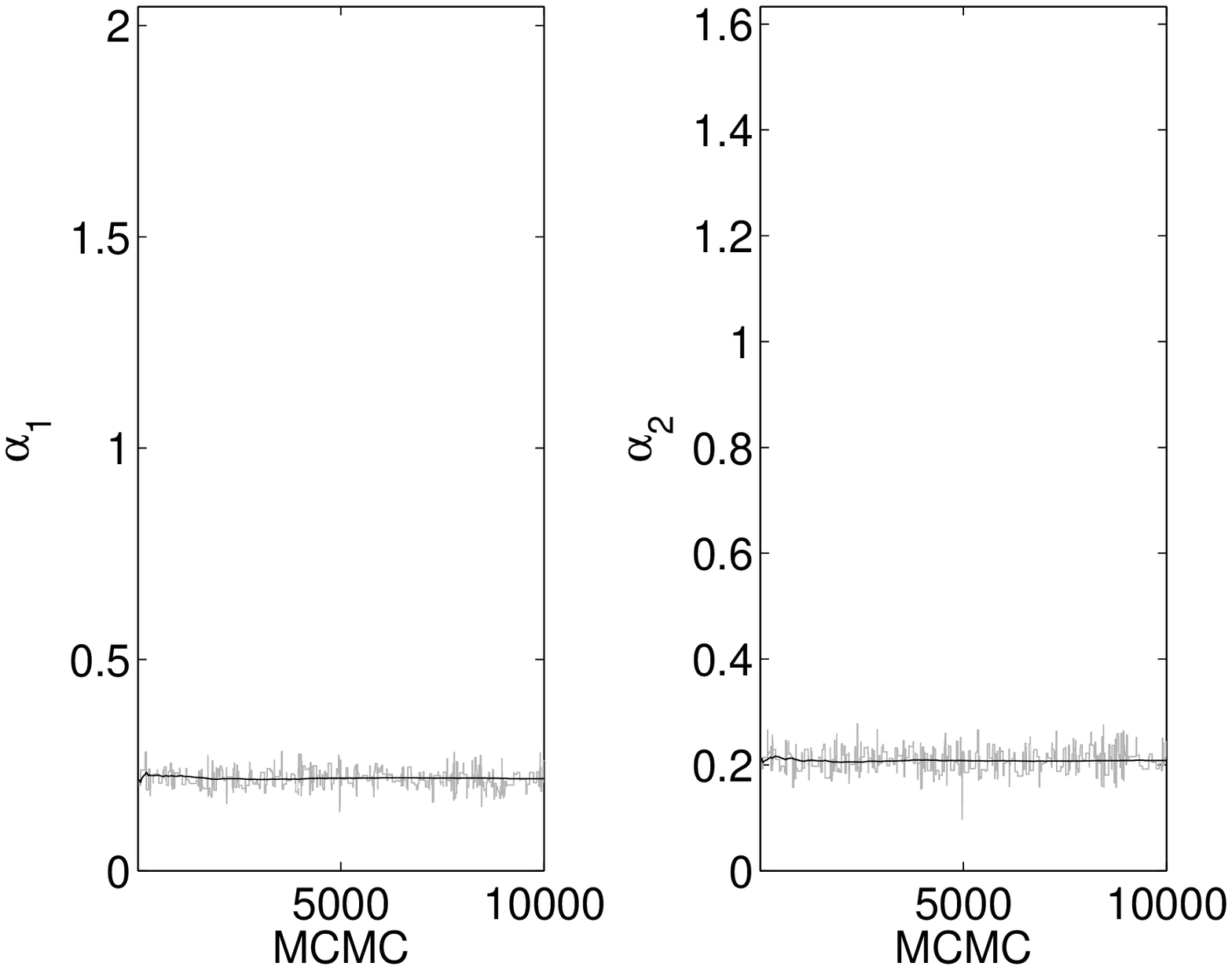}\\
$\,$&\\
\includegraphics[height=3cm,width=6cm, angle=0, clip=false]{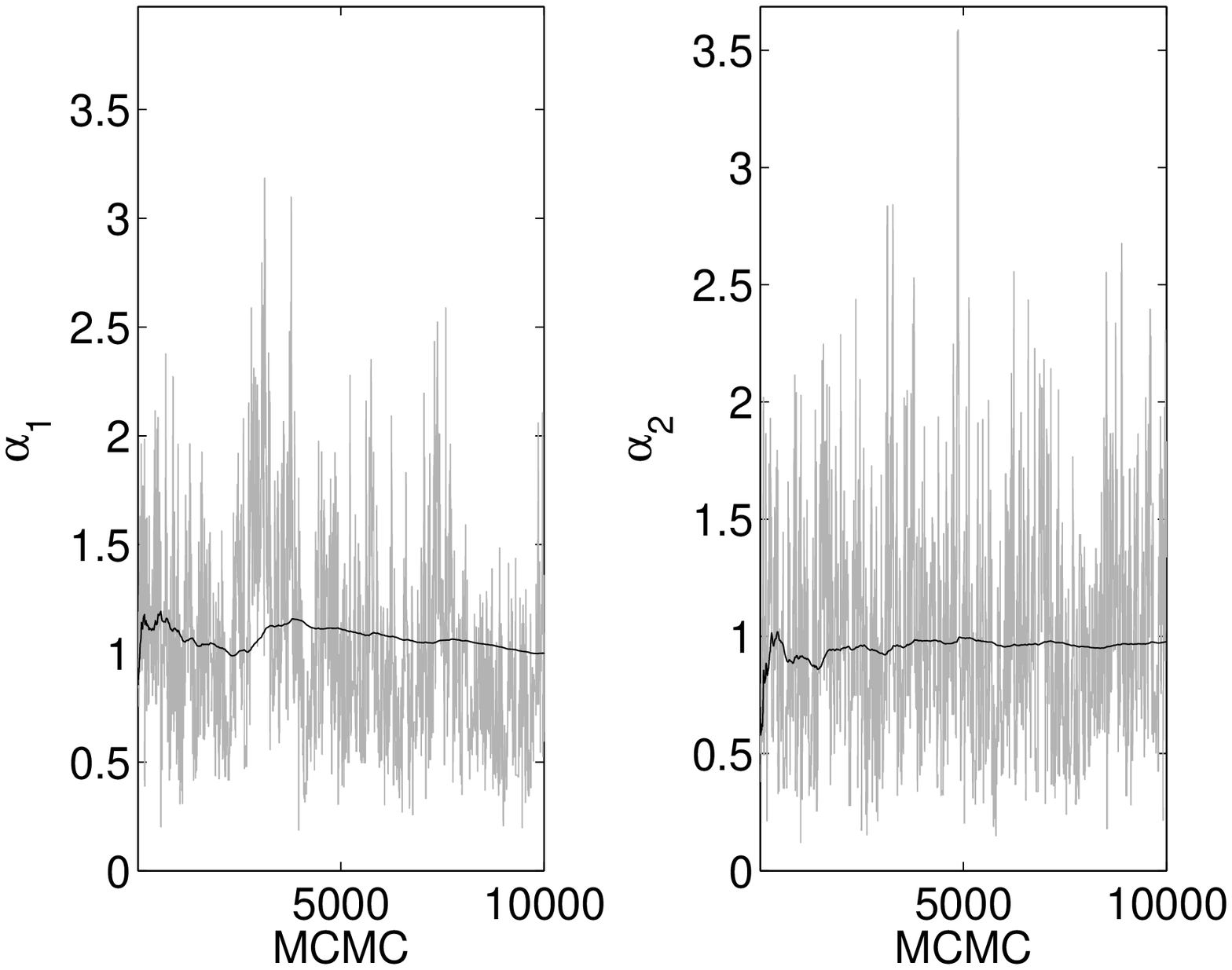}&\includegraphics[height=3cm,width=6cm, angle=0, clip=false]{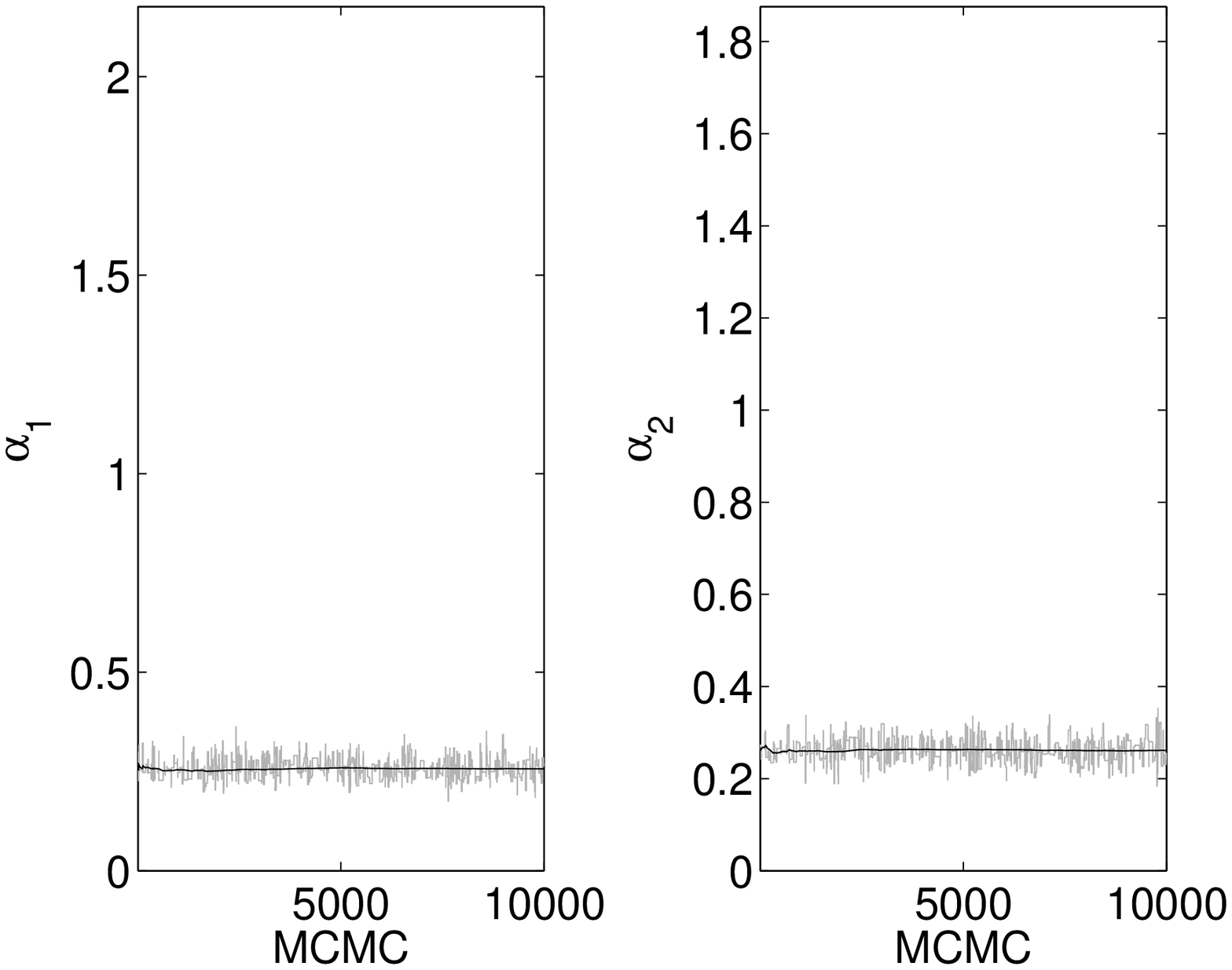}\\
\end{tabular}
\caption{Output of the M.-H. within Gibbs for the two settings (panels WI and SI) for the parameters $\alpha_{1}$ and $\alpha_{2}$ of the models Mix1 (first row), Mix2 (second row), and
Mix3 (third row).}\label{param}
\end{centering}
\end{figure}

\subsection{ $\beta_{1}\!-\!\hbox{DDP}(\XXi,H_0)$  mixtures of vector autoregressive processes}$\quad$
In parametric models for the growth rate of the industrial production (business cycle), great advances
have been made by allowing for separate parameter values in periods (called regimes) of recession and
expansion. The seminal paper of \cite{Ham89} proposes to use a dynamic mixture model with
two components for capturing clustering of observations during the recession and expansion phases
in a business cycle. This simple model has been successfully extended in many directions. In particular, the
estimation of the number of regimes is an important issue studied in many papers (e.g., \cite{KimMur02},
\cite{KimPig00} and \cite{Kro00}). The estimation of the number of regimes is  still an open issue in the
analysis of the business cycle. Moreover, specifically it is interesting to verify
whether the strong contraction in 2009 calls for the use of a higher number of regime than three or four in business cycle models.

The above cited papers consider parametric models with a regime-switching mechanism and use some model selection
criteria to estimate the number of regimes. Conversely, in this paper, we propose a non--parametric approach to the
joint estimation of the number of regimes in multiple time series. We assume our Dirichlet mixture process
$\beta_{1}\!-\!\hbox{DDP}(\XXi,H_0)$ as a prior for the parameters of a vector autoregressive model (VAR) for time series
data. We consider two well studied cycles of the international economic system: the United States (US) and the
European Union (EU) cycles. Even if the features of the regimes (or clusters) in the US and the EU
growth rates are different, one could expect that the regimes in the two cycles also exhibit some dependence.
For this reason, we apply a dependent multivariate Dirichlet process to account for the similarity between the clustering
of the two series. In this sense, our model extends the existing literature on the use of univariate Dirichlet process
prior for time series models. In this literature, the same clustering process is usually assumed for all
the parameters of a multivariate model.

We consider seasonally and working day adjusted industrial production indexes (IPI), at a monthly frequency
from the time of April 1971 to January 2011, for the US and the EU, $\{X_{1t}\}_{t=1}^{T}$ and $\{X_{2t}\}_{t=1}^{T}$ respectively (see first row
in Fig. \ref{dataindp}). We take the quarterly growth rate: $Y_{it}=\log X_{it}-\log X_{it-4}$ (second row in Fig. \ref{dataindp}).
The histograms of these time
series (see histograms Fig. \ref{dataindppost}) exhibit many modes that are the results of different regimes in the series.
We consider the
following specification for the VAR model
\begin{equation*}
{\,\,Y_{1t}\, \choose \,\,Y_{2t}\,}%
={\,\, \tilde \mu_{1t}\, \choose \,\, \tilde \mu_{2t}\,}
+
{ \,\,Z_{t}'  \quad O_{2p}'\,\choose \,\,O_{2p}'  \quad Z_{t}'\,}
{\,\, \Upsilon_{1} \,\choose \,\, \Upsilon_{2}\,}
+
{\,\varepsilon_{1t}\,\choose \,\varepsilon_{2t}\,}
\end{equation*}
for $t=1,\ldots,T$, where $O_{2p}=(0,\ldots,0)'\in\mathbb{R}^{2p}$, $\Upsilon_{i}=(\upsilon_{1,1,i},\ldots,\upsilon_{1,2p,i})'$ and $Z_{t}=(Y_{1t-1},\ldots,Y_{1t-p}$, $Y_{2t-1},\ldots,Y_{2t-p})'$ and $\varepsilon_{it}\sim\mathcal{N}(0,\tilde \sigma^{2}_{it})$ with $\varepsilon_{1t}$ and $\varepsilon_{2s}$
independent $\forall s,t$.

In this paper we consider four lags (i.e. $p=4$) as for example in \cite{Ham89} and \cite{Kro00}. Moreover, as most of the forecast errors are due to shifts to
the deterministic factors (see \cite{Kro00} and \cite{CleKro98}), we propose a model with shifts in the intercept and in the volatility and
assume a vector of Dirichlet processes as a prior for $(\mu_{it},\sigma_{it}^{2})$, $i=1,2$
\begin{equation}
\begin{split}
& (\tilde \mu_{1t},\tilde \sigma_{1t}^{2})|  \TG_1, \TG_2  \stackrel{i.i.d.}{\sim}   \TG_1   \\
& (\tilde \mu_{2t},\tilde \sigma_{2t}^{2})|  \TG_1,\TG_2  \stackrel{i.i.d.}{\sim}    \TG_2   \\
& ( \TG_1, \TG_2) \sim  \beta_{1}\!-\!\hbox{DDP}(\tilde \XXi,H_0)   \\
&  \tilde \XXi \sim \mathcal{G}(\zeta_{11},\zeta_{21})\mathcal{G}(\zeta_{12},\zeta_{22}) \\
\end{split}
\end{equation}
where the base measure $H_{0}$ is a product of normal $\mathcal{N}(0,s^{-2}I_{2})$ and inverse gamma $\mathcal{I}\mathcal{G}(\lambda,\lambda)$.

Following the standard practice in Bayesian VAR modelling, for the parameters $\Upsilon_{1}$ and $\Upsilon_{2}$ we consider improper uniform prior on $\RE^{4p}$ and obtain a multivariate normal as full conditional posterior distribution to be used in the Gibbs sampler.
\begin{figure}[p]
\begin{centering}
\begin{tabular}{cc}
\includegraphics[height=4cm,width=5.5cm, angle=0, clip=false]{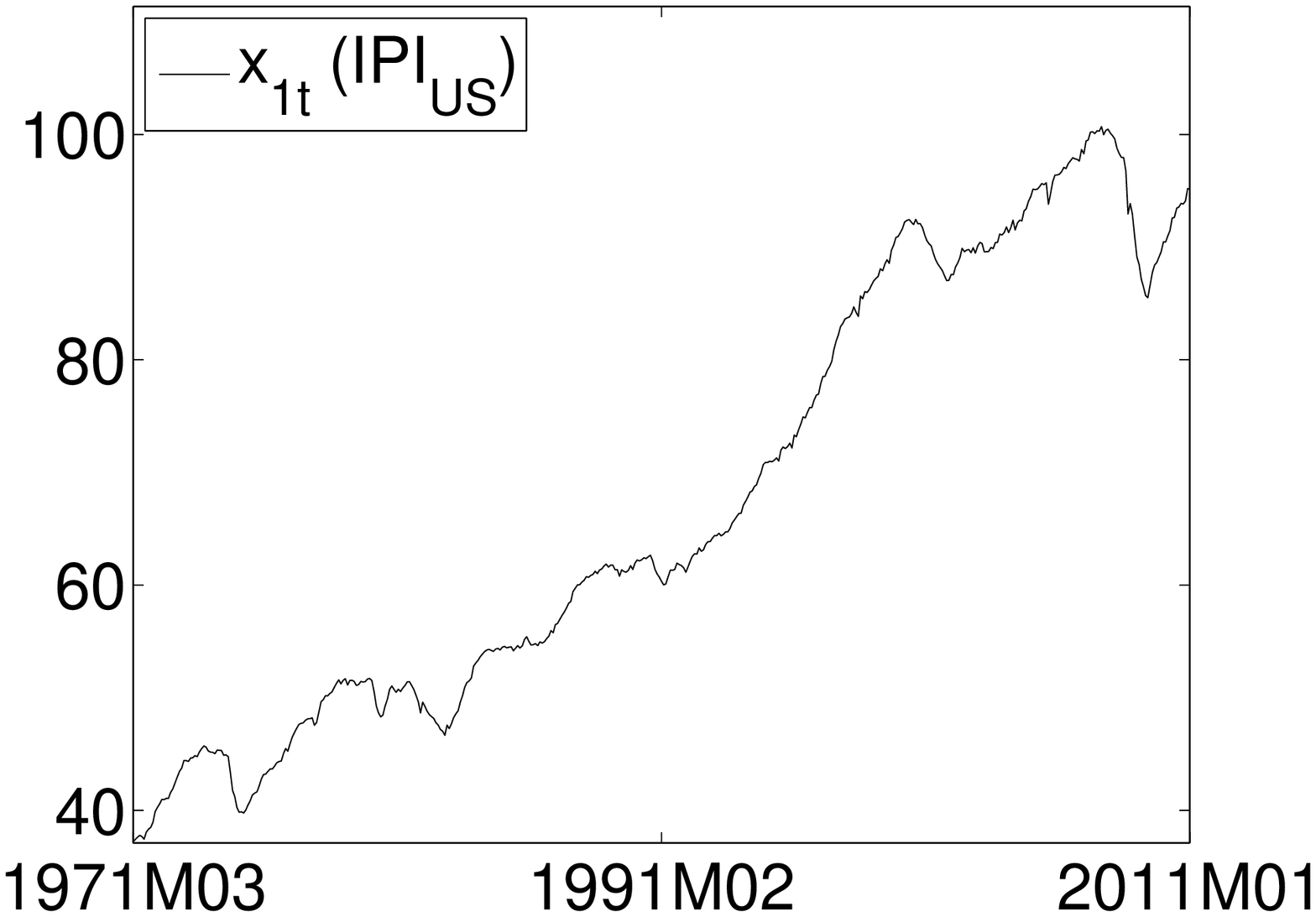}&\includegraphics[height=4cm,width=5.5cm, angle=0, clip=false]{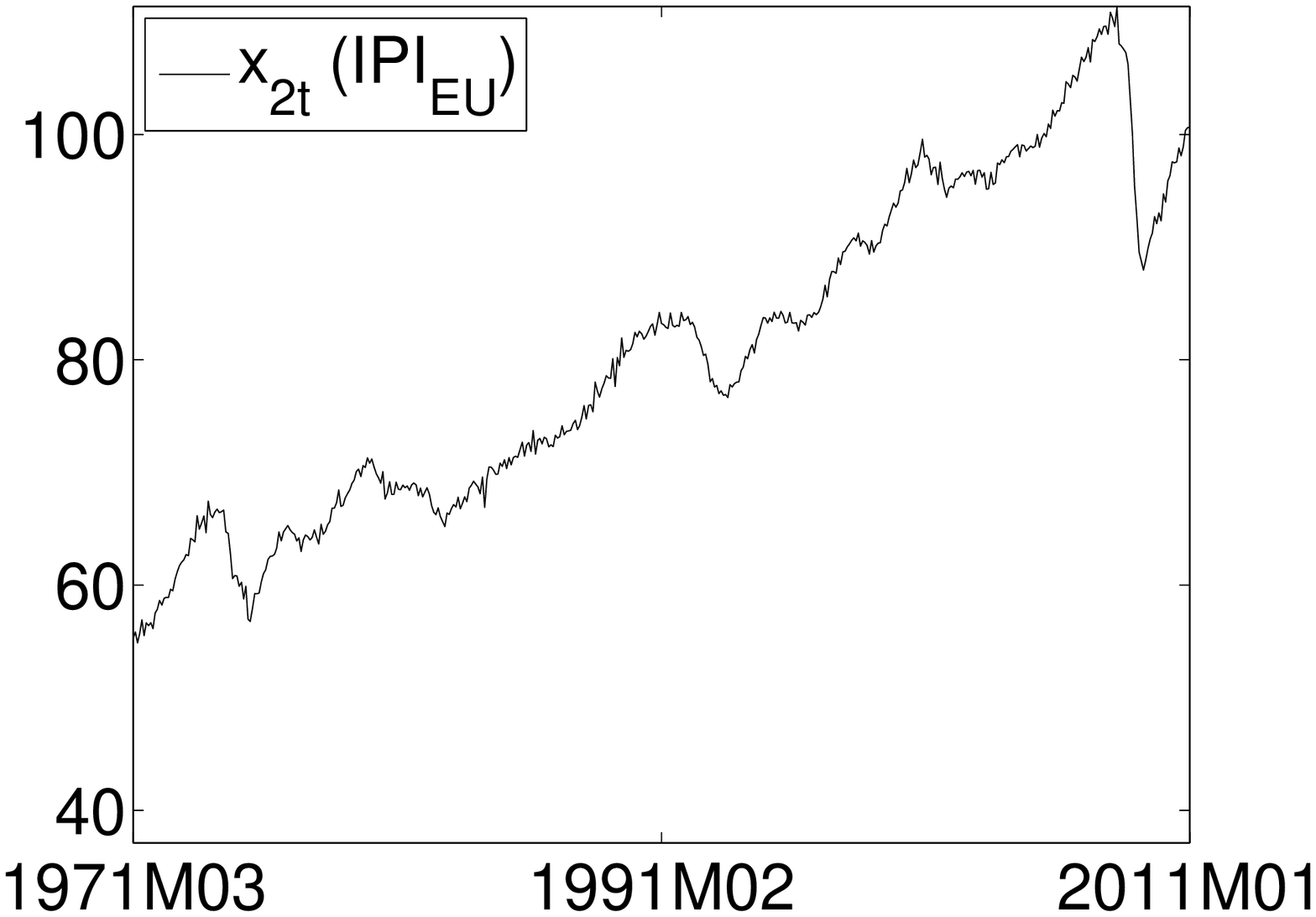}\\
\includegraphics[height=4cm,width=5.5cm, angle=0, clip=false]{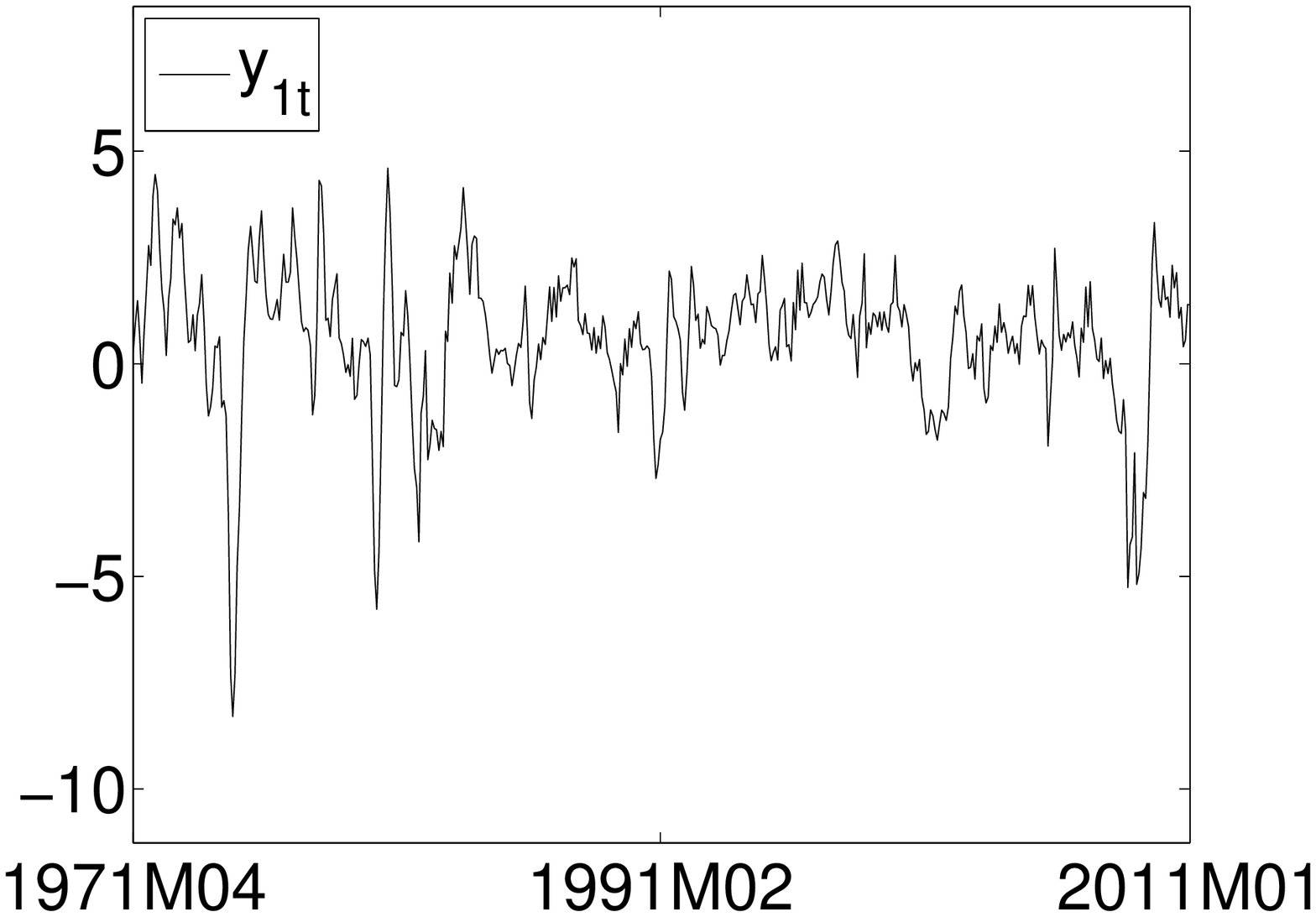}&\includegraphics[height=4cm,width=5.5cm, angle=0, clip=false]{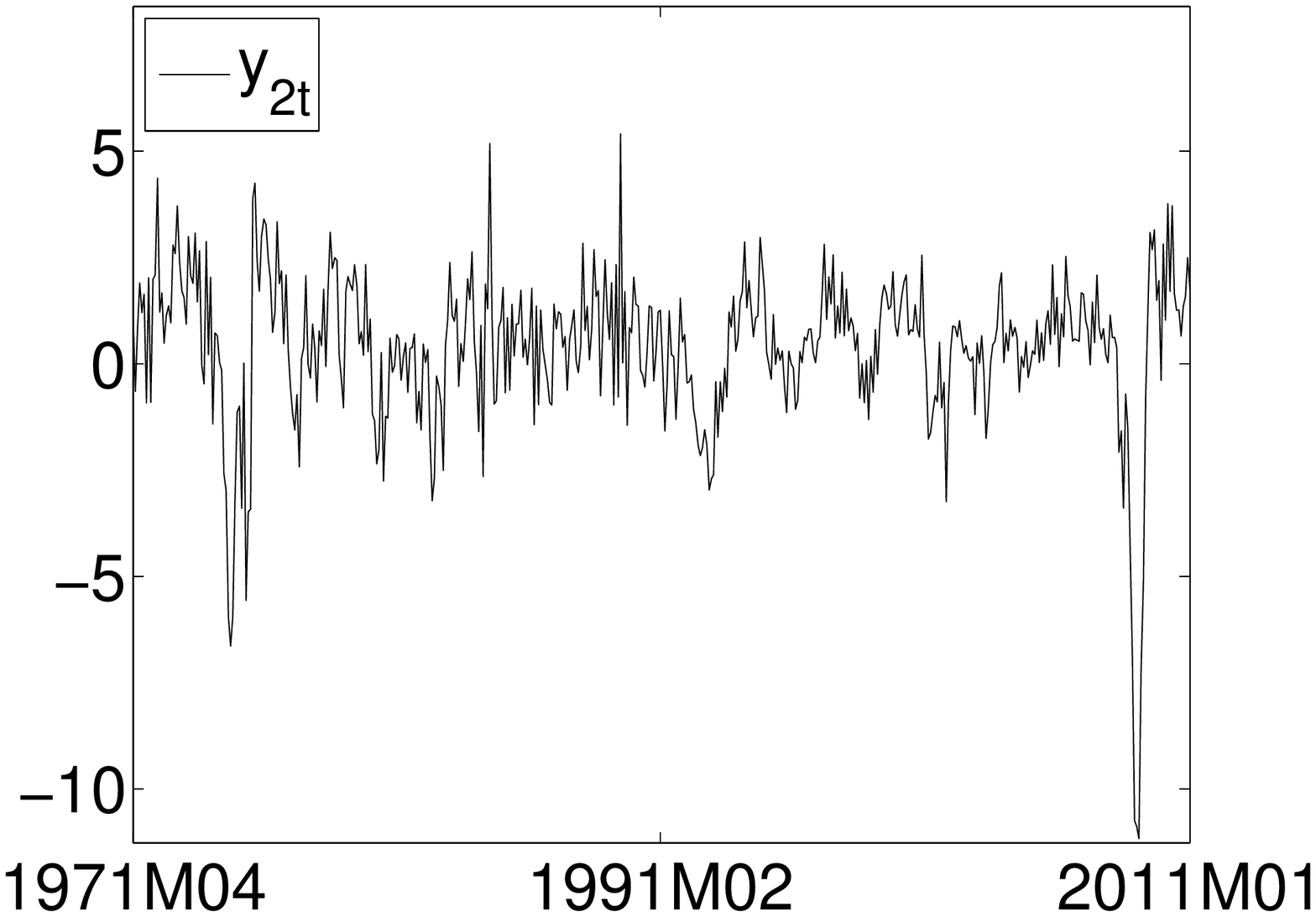}\\
\end{tabular}
\caption{First row: Industrial Production Index (IPI) for the US ($x_{1t}$) and the EU ($x_{2t}$) at monthly frequency for the period: March 1971 to
January 2011. Second row: logarithmic quarterly changes in the US IPI ($y_{1t}$) and the EU IPI ($y_{2t}$) variables.}\label{dataindp}
\end{centering}
\end{figure}
\begin{figure}[p]
\begin{centering}
\begin{tabular}{cc}
\includegraphics[height=4cm,width=5.5cm, angle=0, clip=false]{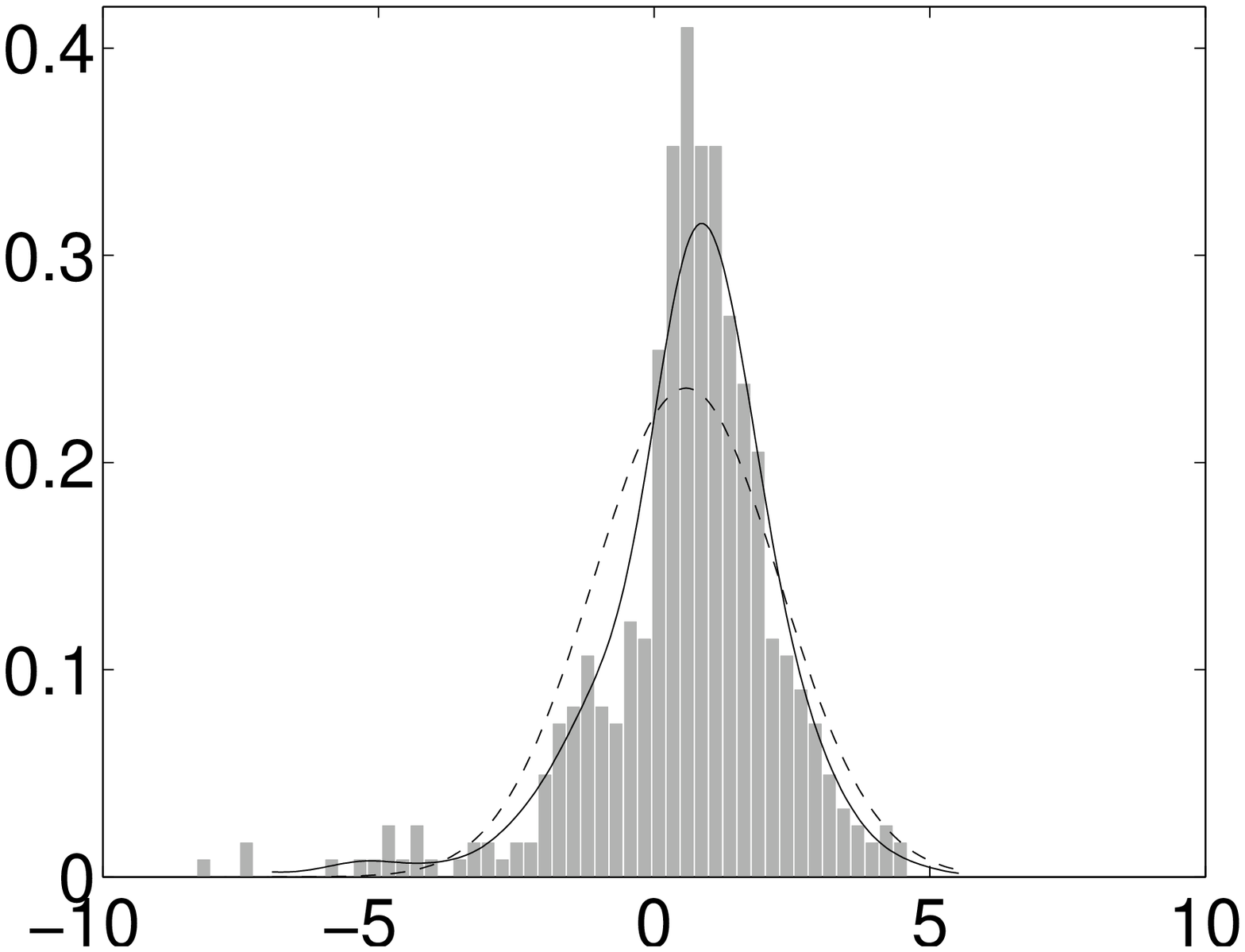}&\includegraphics[height=4cm,width=5.5cm, angle=0, clip=false]{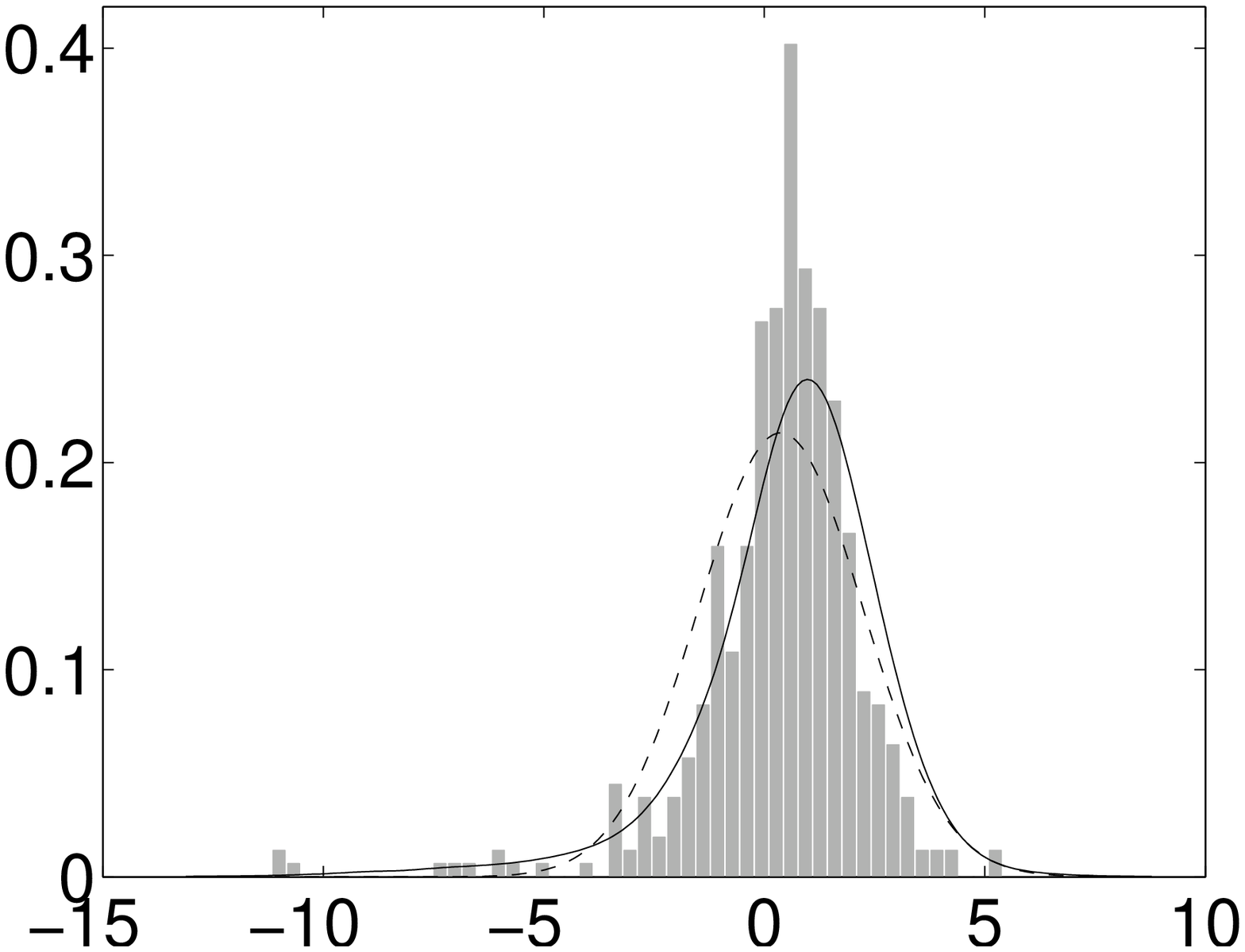}\\
\includegraphics[height=4cm,width=5.5cm, angle=0, clip=false]{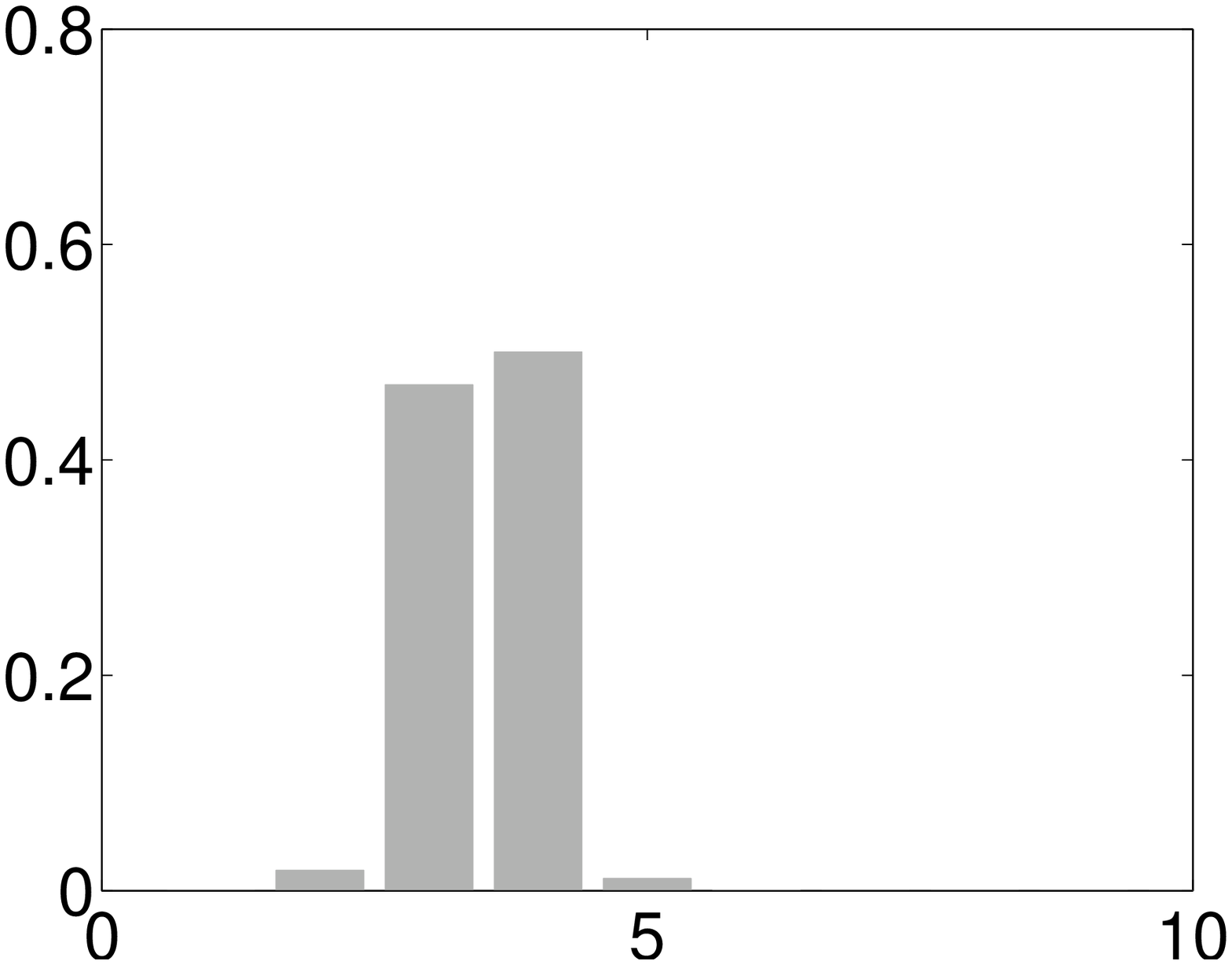}&\includegraphics[height=4cm,width=5.5cm, angle=0, clip=false]{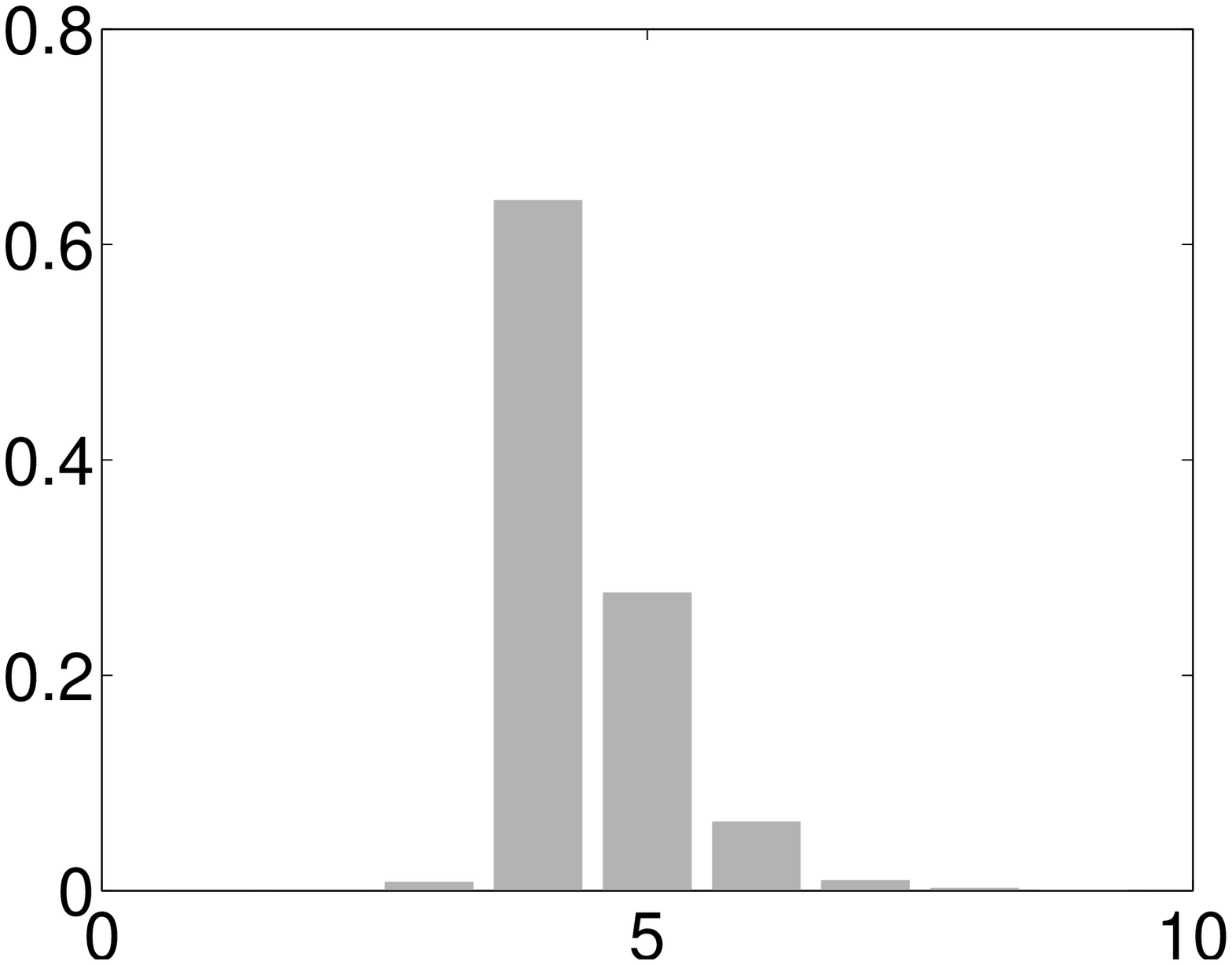}\\
\end{tabular}
\caption{First row: IPI log-changes (histogram), predictive distribution (solid line), best normal (dashed line). Second row: number of clusters.}\label{dataindppost}
\end{centering}
\end{figure}
The charts in the first row of Fig. \ref{dataindppost} show the predictive distributions (solid lines) 
generated by the non--parametric approach conditioning
on all values of $y_{it}$, for $i=1,2$ and $t=1,\ldots,T$ and the best normal fits (dashed lines) for the empirical distributions of the two series.

From a comparison with the empirical distribution, we note that the non--parametric approach, as opposed to the normal model, is able to capture asymmetry, excess of kurtosis, and multimodality in the data. The results from our non--parametric approach are in line with the practice of using of time-varying parameter models (e.g., Markov-switching models) to capture asymmetry and non-linearity in both the US and the EU business cycles.

The posterior distribution of the number of clusters is given in the second row of Fig. \ref{dataindppost}. The location of the posterior mode of the histograms allows us to conclude that the non--parametric approach detects three clusters for the US cycle and four clusters for the EU cycle. The result for the US data is coherent with the results available in the literature where three-regime Markov-switching models (see for example \cite{Kro00}) are usually considered. Moreover, we observe that the inclusion in the sample of the 2009 negative-growth (recession) period extends the validity of many past empirical findings that do not include the 2009 slowdown in the economic activity. An inspection of the posterior mean of the atoms and of the marginal clustering (see below in this section) allows us to conclude that the three clusters can have the economic interpretation of business cycle phases associated to substantially different levels of IPI growth-rates.

\begin{figure}[t]
\begin{centering}
\begin{tabular}{c}
\includegraphics[height=9.5cm,width=13.5cm, angle=0, clip=false]{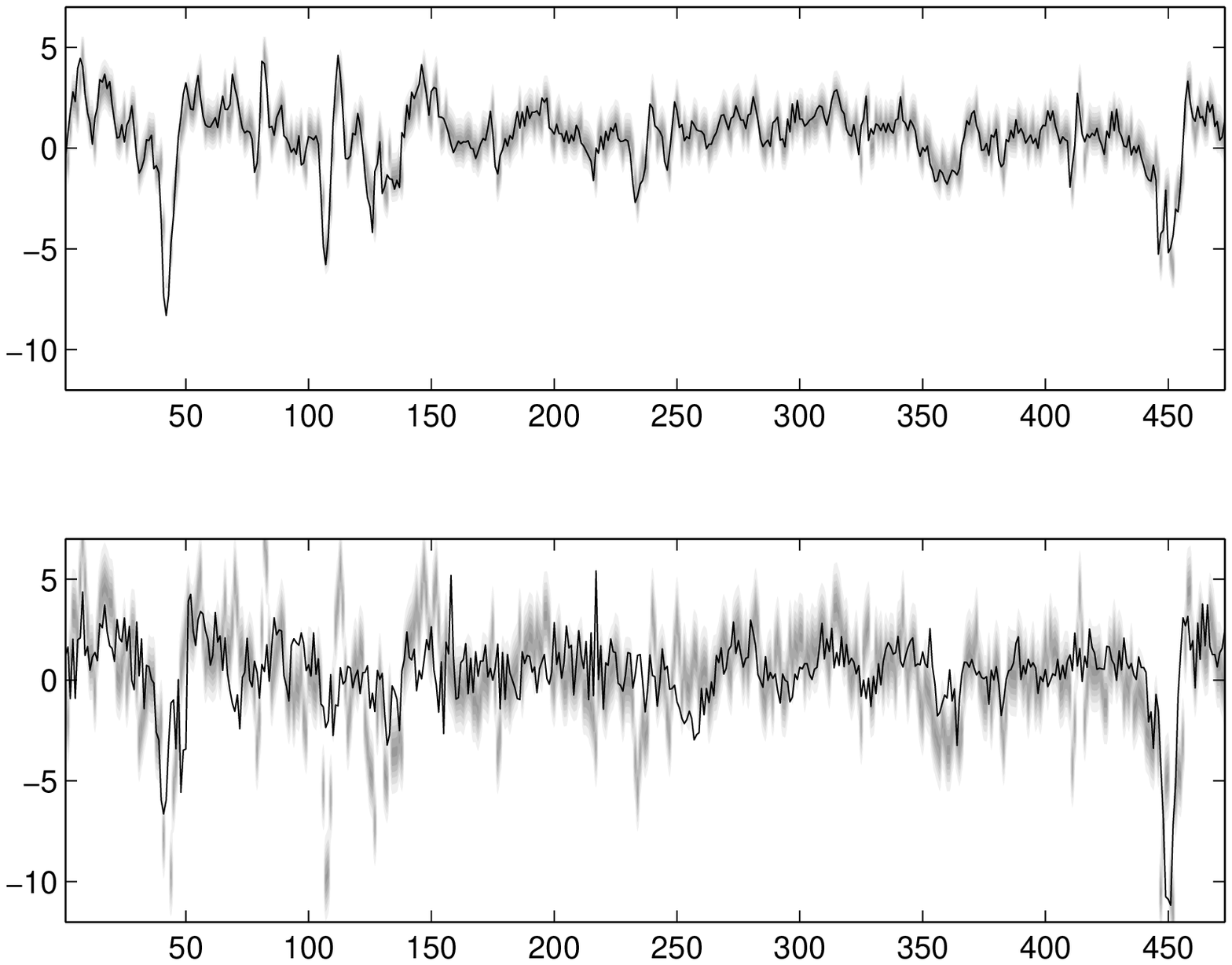}\\
\end{tabular}
\caption{US and EU IPI growth rates (black lines) and predictive densities (gray areas) evaluated sequentially for
$t=1,\ldots,T$, at the values of the predictors $y_{it-1},\ldots,y_{it-p}$, for $i=1,2$.}\label{dataindpSeq}
\end{centering}
\end{figure}

The results for the US and the EU cycles are, in a certain way, coherent with the output of parametric studies which suggest to consider at least three regimes. Nevertheless, the effects of the 2009 recession on the past empirical findings is an open issue and a matter of research. The result from our non--parametric approach is an interesting one because it suggests that four components are needed in order to capture the effects of the 2009 recession phase (see Fig. \ref{dataindppost}). As a consequence of the 2009 recession, a long left tail  present in the predictive (solid line in Fig. \ref{dataindppost}) is fatter than the tail of the best normal (dashed line in the same figure).

\begin{figure}[t]
\begin{centering}
\begin{tabular}{cc}
\multicolumn{2}{c}{\textbf{t=430 (1st of July 2007)}}\\
\includegraphics[height=5.5cm,width=6.5cm, angle=0, clip=false]{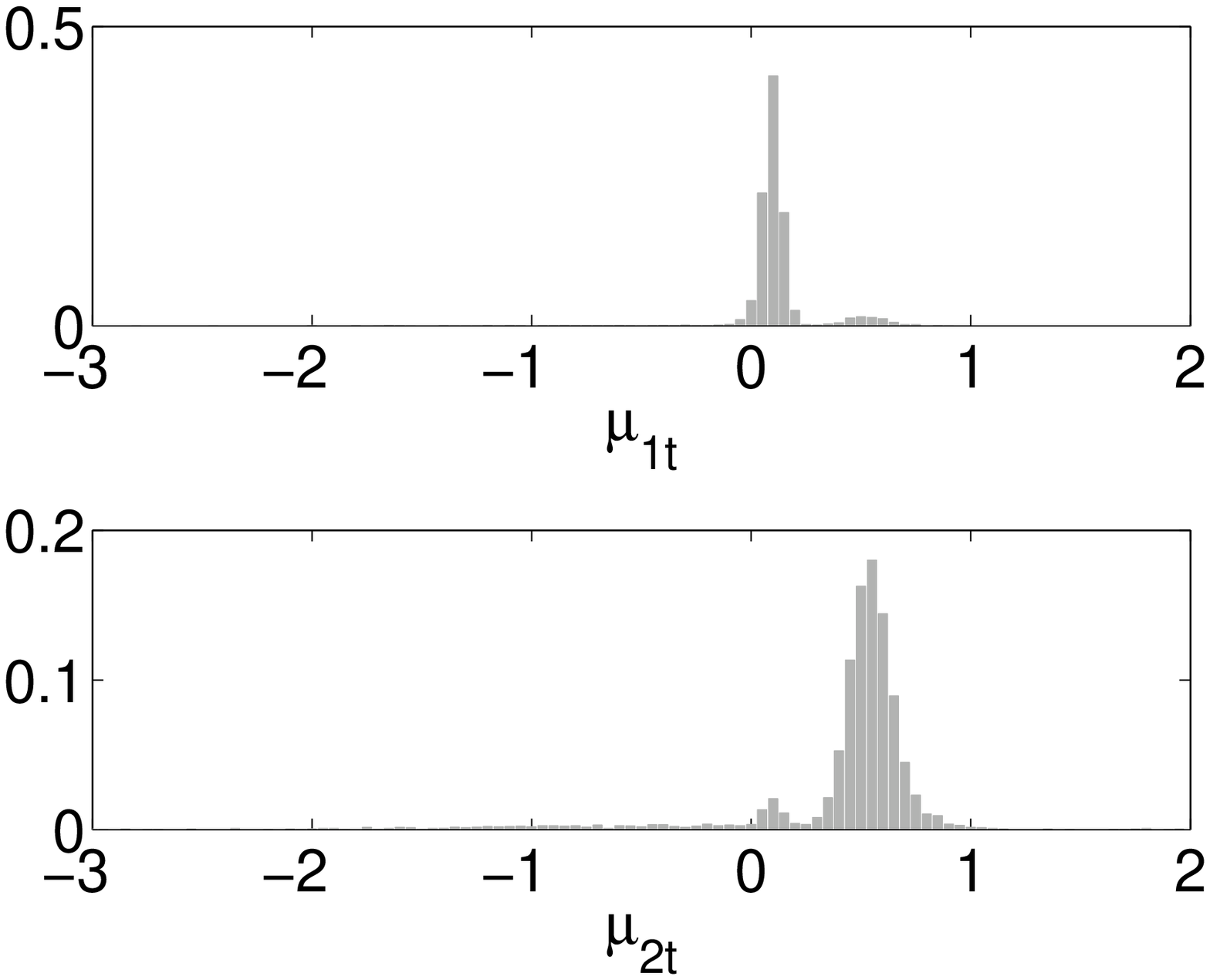}&
\includegraphics[height=5.5cm,width=6.5cm, angle=0, clip=false]{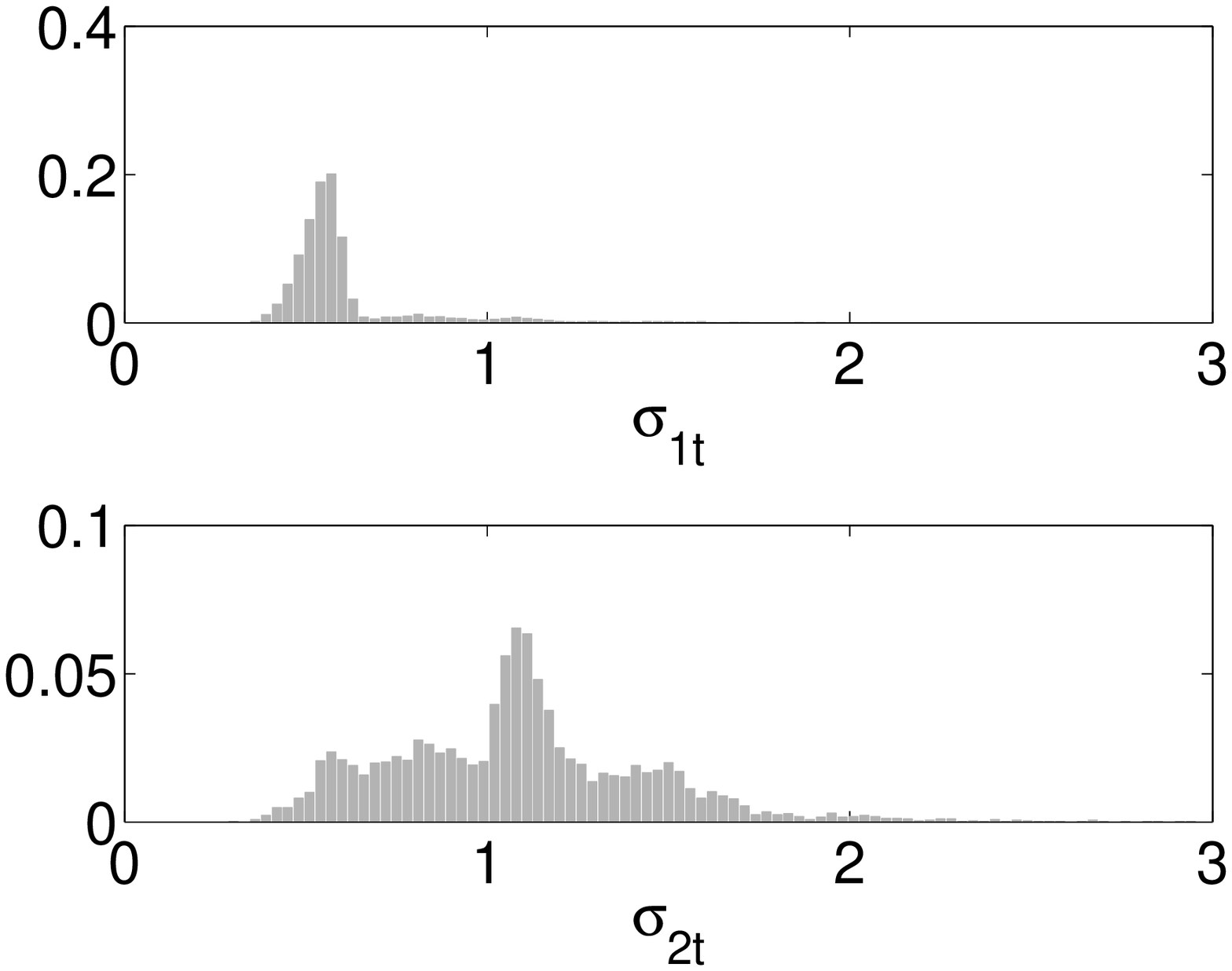}\\
\multicolumn{2}{c}{\textbf{t=450 (1st of March 2009)}}\\
\includegraphics[height=5.5cm,width=6.5cm, angle=0, clip=false]{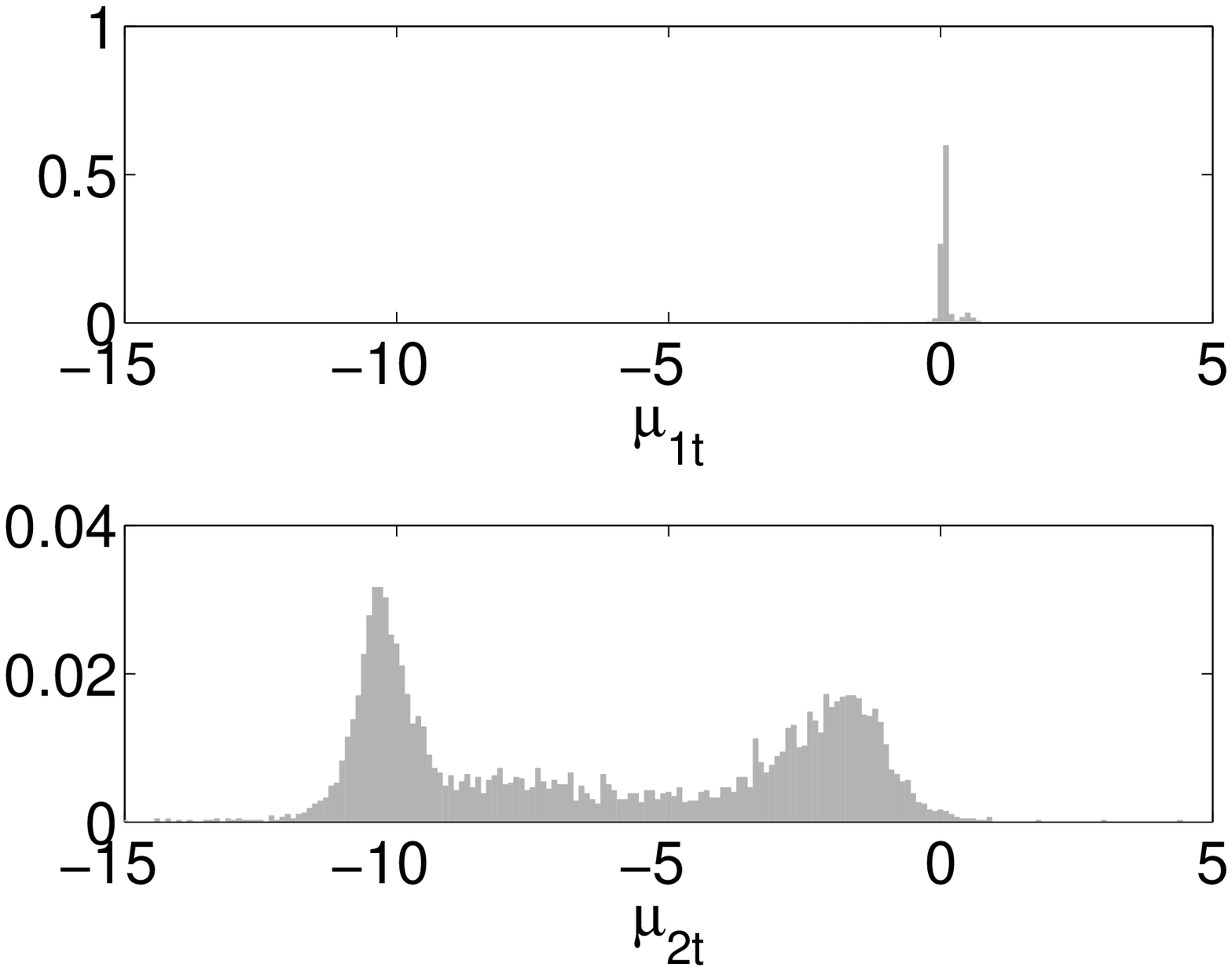}&
\includegraphics[height=5.5cm,width=6.5cm, angle=0, clip=false]{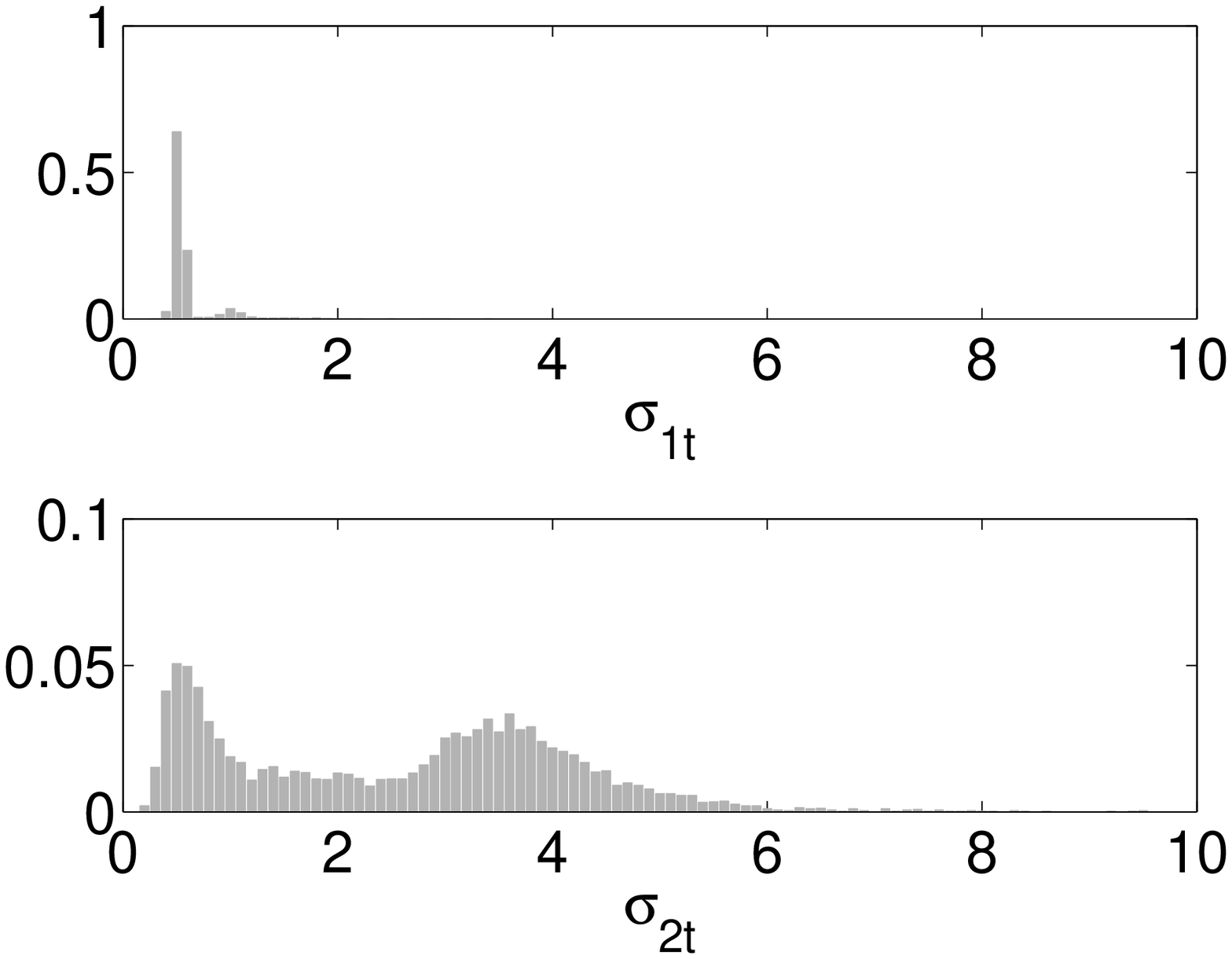}\\
\end{tabular}
\caption{Posterior density approximations for the atoms of the US ($(\mu_{1t},\sigma_{1t})$) and the EU ($(\mu_{2t},\sigma_{2t})$) growth rates $y_{1t}$ and $y_{2t}$
during expansion ($t=430$) and recession period ($t=450$).}\label{atom}
\end{centering}
\end{figure}

Fig. \ref{dataindpSeq} shows the sequence of predictive densities (gray area) indexed by time $t$, for $t=1,\ldots,T$. The predictive density for $y_{it}$ has been estimated conditionally on the whole set of data and has been evaluated sequentially over time at the current values of the predictors $y_{it-1},\ldots,y_{it-p}$, for $i=1,2$. In this figure, the effects of the recession are evident from the presence of  non-negligible probability values in correspondence of extremely negative growth rates that were not realized before 2009. Similarly, we found that, in both the expansion and recession phases, posterior distribution of the atoms exhibit multimodality and asymmetry. As an example Fig. \ref{atom}, shows the approximated posterior of the atoms $\mu_{it}$ and $\sigma_{it}$ in periods of expansion ($t=430$) and recession ($t=450$).  The posterior distribution of $\mu_{it}$ exhibits two modes in the positive half of the real line during an expansion phase and two modes in the negative half during a recession phase (first column of Fig. \ref{atom}). From the second column of the same figure, one can conclude that the volatility posterior distribution for both the US and the EU is more concentrated around lower values in expansion periods.

In order to identify the different components of our DP mixture model, we compute the posterior clustering of the data and the associated values of the atoms for each observations and country. We apply the least square clustering method proposed originally in \cite{Dahl}. The method has been successfully used in many applications (see for example \cite{KimTadesseVannucci} and \cite{RodriguezDunsonGelfand2008}) and is based on the posterior pairwise probabilities of joint classification
$P\{D_{is}=D_{jt}|Y\}$. To estimate this matrix, one can use
the following pairwise probability matrix:
$$
P_{ij,st}=\frac{1}{M}\sum_{l=1}^{M}\delta_{D_{is}^{l}}(D_{jt}^{l})
$$
that is estimated by using every pair of allocation variable $D_{is}^{l}$ $D_{jt}^{l}$, with $s,t=1,\ldots,T$ and over all the $l=1,\dots,M$ MCMC iterations. In \cite{Dahl}'s algorithm, one needs to evaluate $P_{ij,st}$ for $i=j$ and $i=1,2$.
\begin{figure}[t]
\begin{centering}
\begin{tabular}{cc}
\multicolumn{2}{c}{\textbf{Posterior Clustering for the US data}}\\
\includegraphics[height=5cm,width=6cm, angle=0, bbllx=20,bblly=220,bburx=567,bbury=620,clip=]{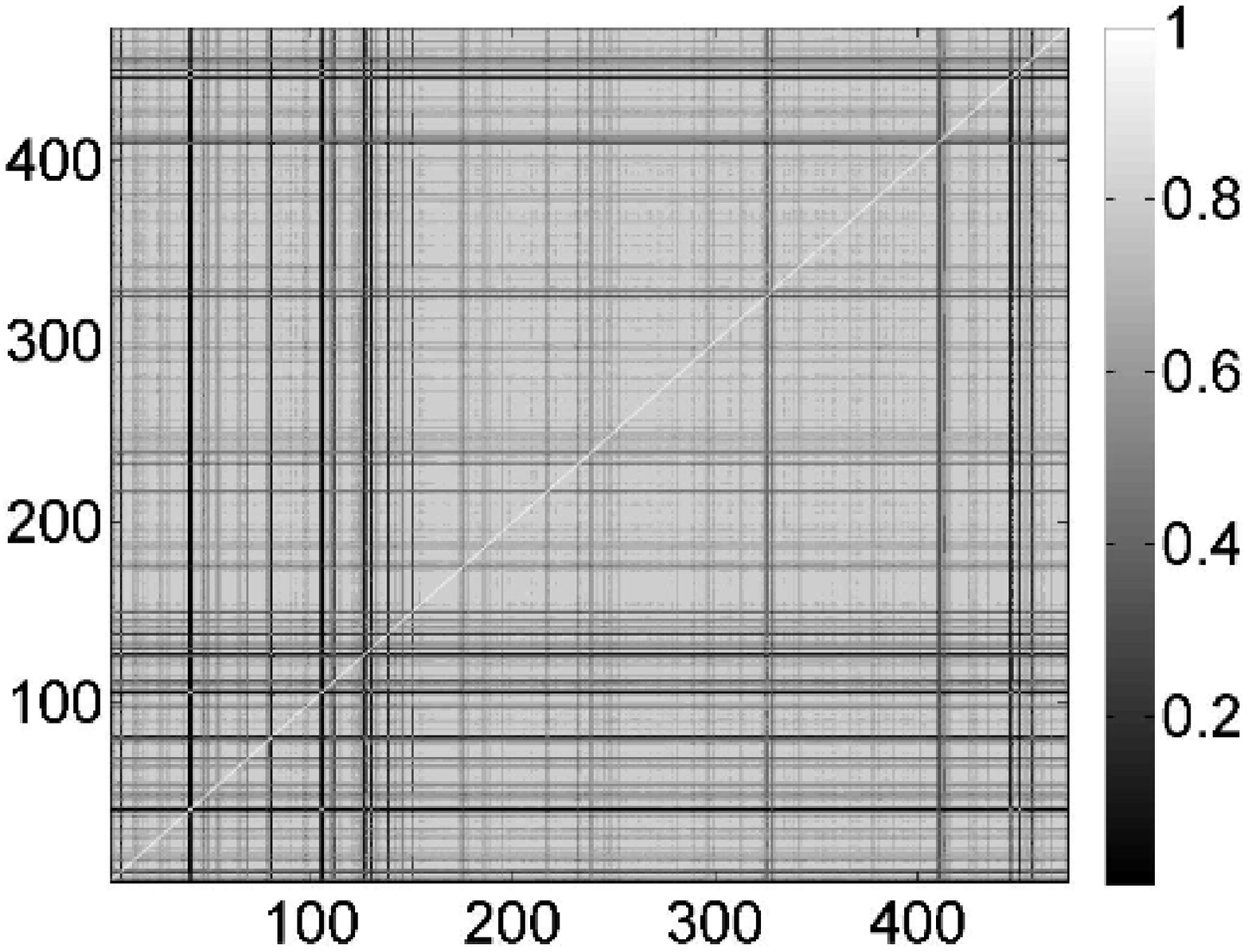}&
\includegraphics[height=5cm,width=6cm, angle=0, bbllx=20,bblly=220,bburx=567,bbury=620,clip=]{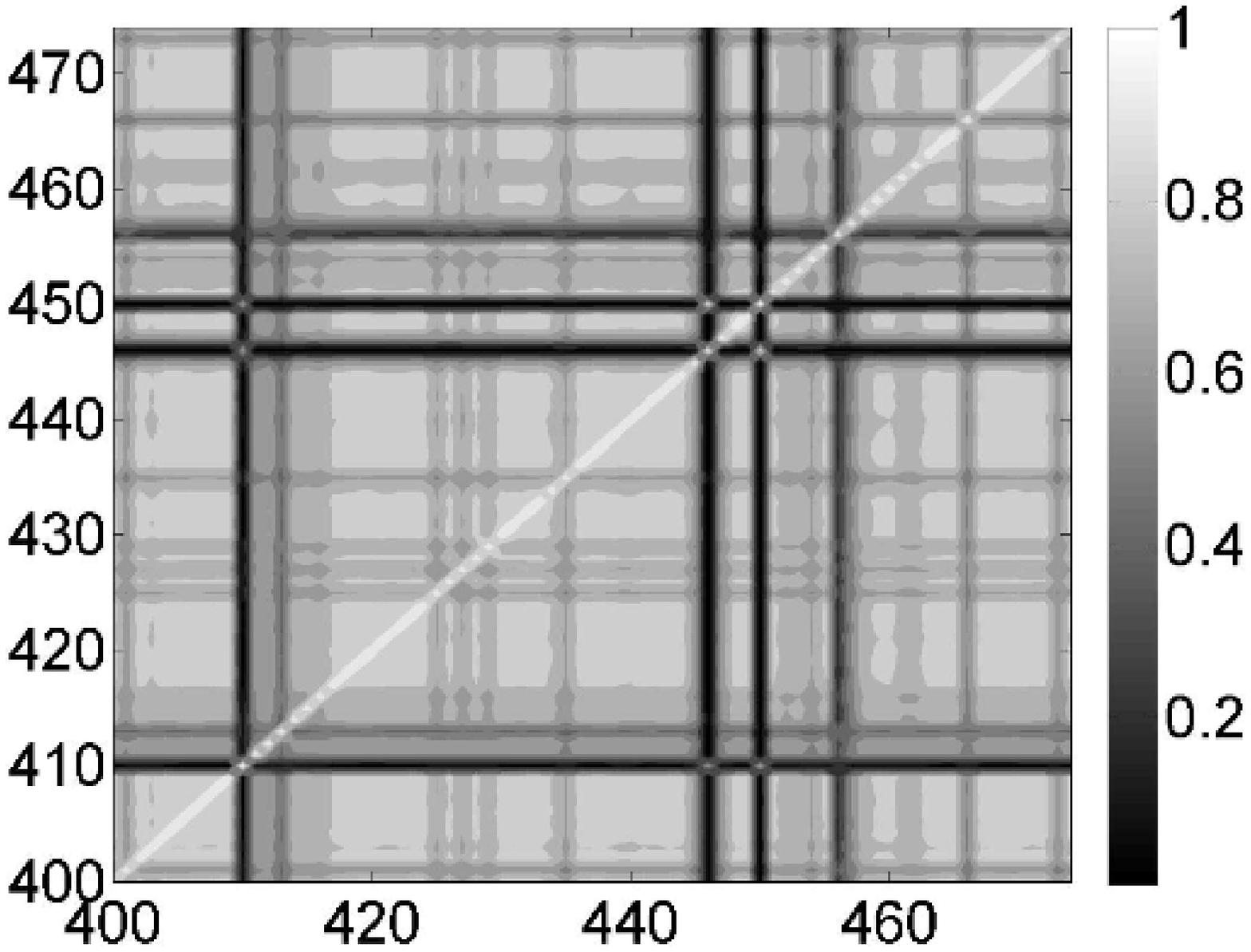}\\
\multicolumn{2}{c}{\textbf{Posterior Clustering for the EU data}}\\
\includegraphics[height=5cm,width=6cm, angle=0, bbllx=20,bblly=220,bburx=567,bbury=620,clip=]{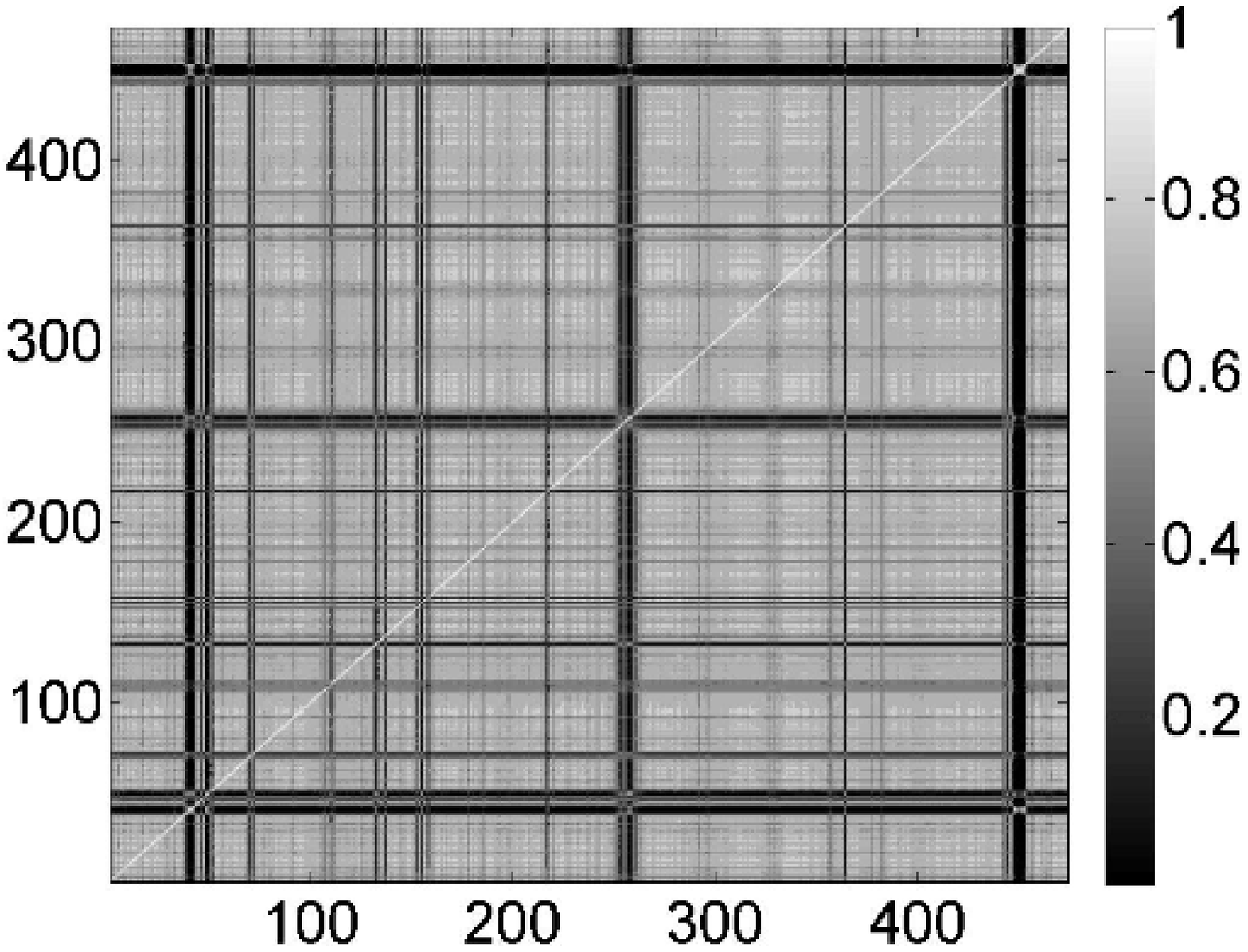}&
\includegraphics[height=5cm,width=6cm, angle=0, bbllx=20,bblly=220,bburx=567,bbury=620,clip=]{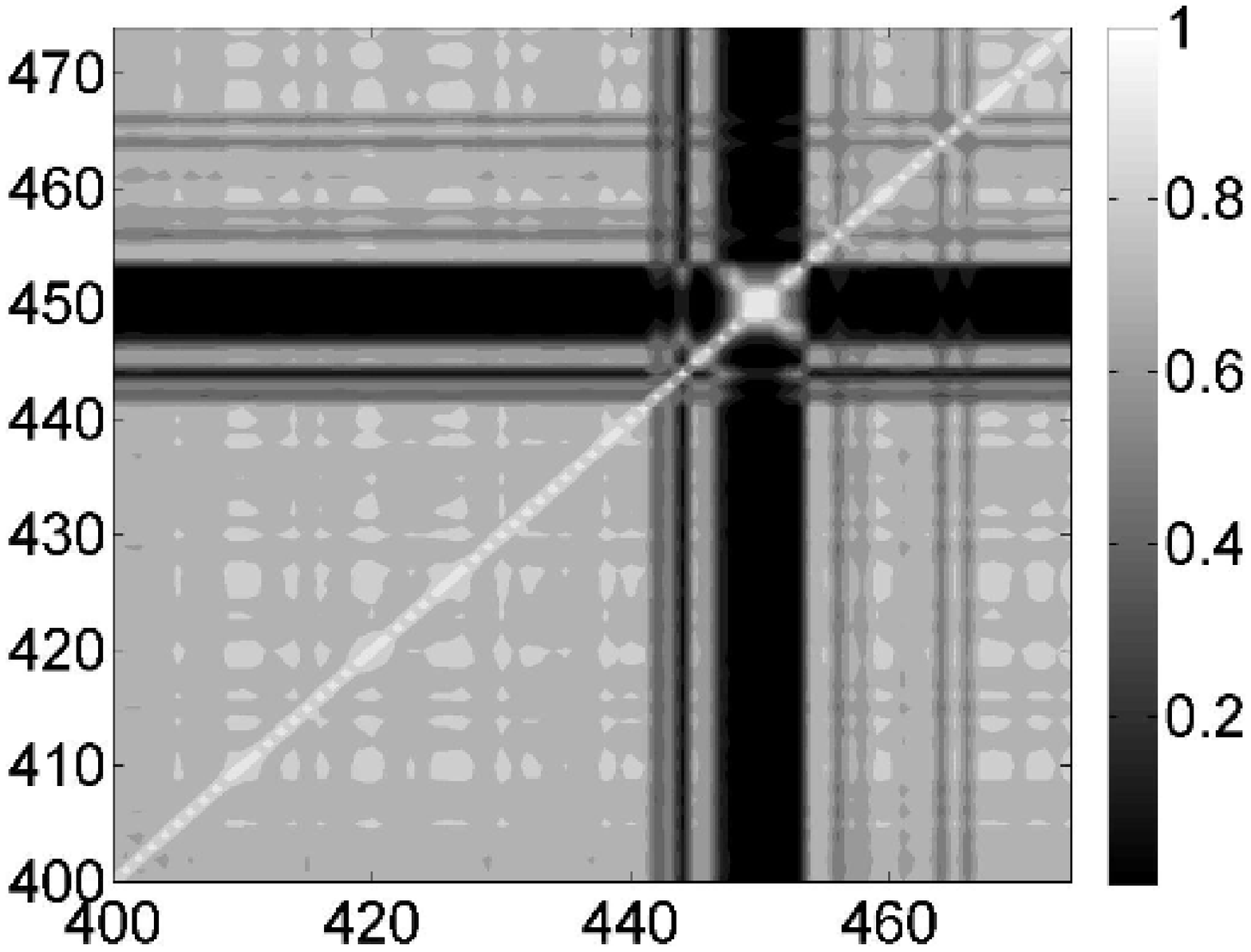}\\
\multicolumn{2}{c}{\textbf{Posterior Common Clustering for the EU and US data}}\\
\includegraphics[height=5cm,width=6cm, angle=0, bbllx=20,bblly=220,bburx=567,bbury=620,clip=]{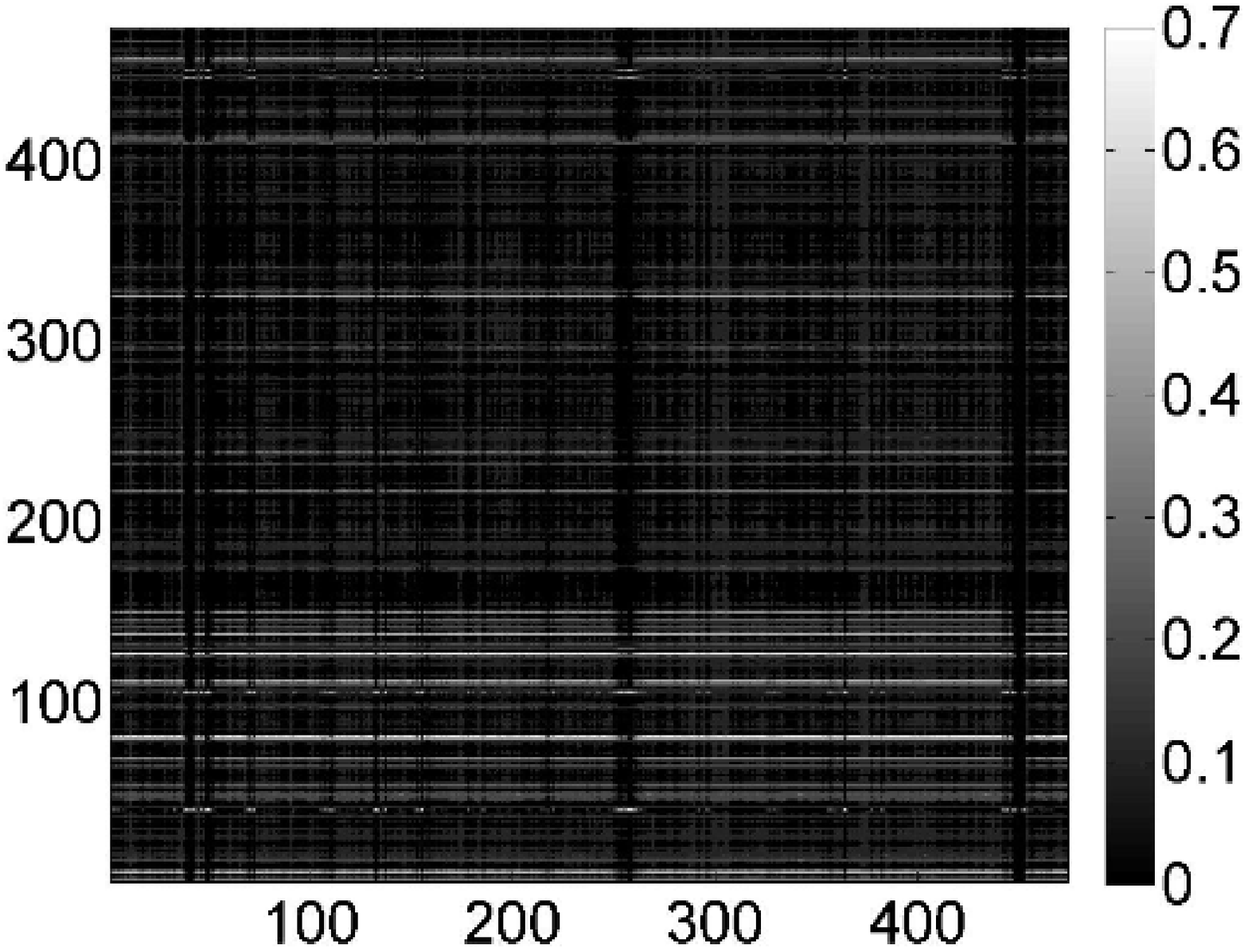}&
\includegraphics[height=5cm,width=6cm, angle=0, bbllx=20,bblly=220,bburx=567,bbury=620,clip=]{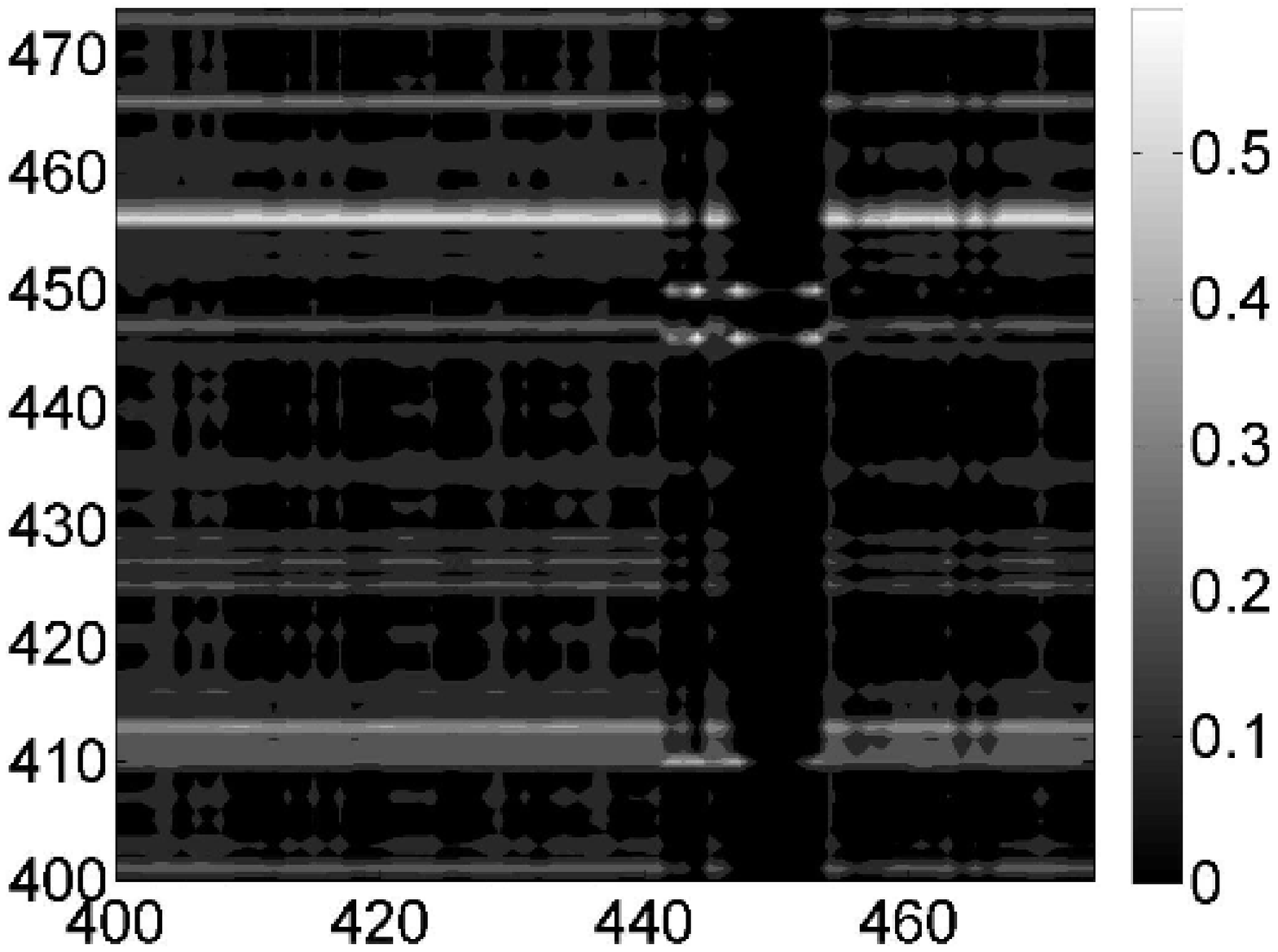}\\
\end{tabular}
\caption{Pairwise posterior probabilities for the clustering of the US data $P_{11,st}$ and the EU data $P_{22,st}$, and the
common clustering between the US and EU data, $P_{12,st}$  for $s,t\in\{1,\ldots,T\}$}\label{clustering}
\end{centering}
\end{figure}
The least square marginal clustering $D_{i,LS}$ is the clustering $D_{i}^{l_{i}}=(D_{i1}^{l_{i}},\ldots,D_{iT}^{l_{i}})$ (see Fig. \ref{clustering}) sampled at the $l_{i}$-th iteration which minimizes the sum of squared deviations from the pairwise posterior probability:
\[
l_{i}=\underset{l\in\{1,\ldots,M\}}{\arg\min}\sum_{t=1}^{T}\sum_{s=1}^{T}\left(\delta_{D_{is}^{l}}(D_{it}^{l})-P_{ii,st}\right)^{2}.
\]

\begin{figure}[t]
\begin{centering}
\includegraphics[height=8cm,width=9cm, angle=0, clip=false]{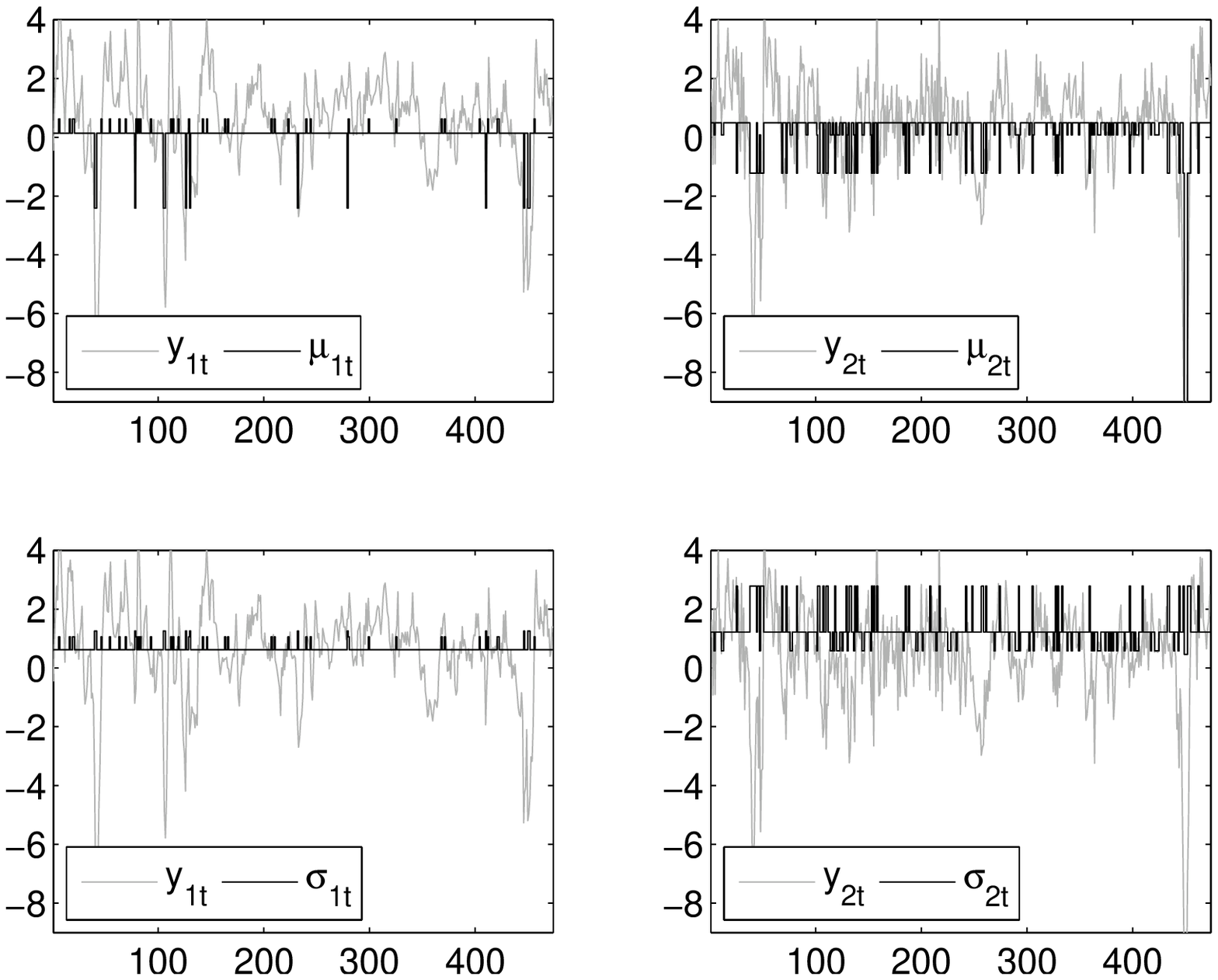}
\caption{Atom $(\hat{\mu}_{it}^{(l_{i})},\sigma_{it}^{(l_{i})})$ associated to the LS marginal clustering $l_{i}$ for $i=1,2$.}\label{atomclust}
\end{centering}
\end{figure}

More specifically, the first row (second row) shows the  posterior probabilities that two observations of the US cycle (EU cycle) belong to the same cluster. In the first column, one can clearly detect the presence of vertical and horizontal dark gray bands. They correspond to observations that do not cluster frequently together with other observations and that are associated with negative growth rates. A similar remark is true for the light gray areas. In the second column of Fig. \ref{clustering}, one can see the different behavior of the clustering for the US and the EU during the 2009 crisis.

Finally, the assumption in Eq. \eqref{atoms1} implies that the set of atoms sampled at every MCMC iteration is the same for the two series. This makes the allocation variables $D_{1t}^{l}$ and $D_{2s}^{l}$ comparable. For this reason, we apply  \cite{Dahl}'s algorithm to study the posterior probability $P_{ij,st}$ that two observations, each one from a different series (i.e. $i\neq j$), belong to the same cluster. That gives a measure of association between the two clustering, induced by the dependent DP, for the two series. The estimated pairwise probability is given in the third row of Fig. \ref{clustering} that shows the probability that two observations, one of the US cycle and another one of the EU cycle, belong to the same cluster. The white and light gray lines show that the two marginal clustering share some atoms.

The least square clustering allows us to find the posterior clustering of the data and to identify the different clusters. For the US cycle, the observations cluster together in three groups (see Fig. \ref{atomclust}) and the atoms associated with the three clusters are $(\mu_{1t},\sigma_{1t})\in\{(-2.4142,1.3625)$, $(0.136,0.6166)$, $(0.6216,1.0618)\}$ and lead to the identification of the cluster as recession, normal expansion, and strong expansion phases. For the EU cycle, the observations are classified in four groups (see Fig. \ref{atomclust}) and the atoms are $((-10.6170,0.6600),$ $(-1.2183,2.7857),$ $(0.0760,0.5794),$ $(0.4909,1.2165))$ and are interpreted as strong recession, normal recession, normal expansion, and strong expansion phases. These results on the features of the cycle phases are coherent with the recent findings in the business cycle literature with an exception for the EU cycle, which presents a fourth cluster of observations with very high negative growth rate $(-10.6170)$ corresponding to the 2009 recession period.

\section{Conclusions}\label{Concl}
We introduce a new class of multivariate dependent Dirichlet processes for modeling vectors of random measures. We discuss some properties of the process. We apply the dependent Dirichlet process to the context of non--parametric Bayesian inference and provide an efficient MCMC algorithm for posterior computation. Since our
process is particularly suitable for groups of data that exhibit a different clustering behavior, we apply it to multiple time series analysis. We provide an original application to the joint analysis of the US and  the EU business cycles and show that our non--parametric Bayesian model is able
able to highlight  some important issues for this kind of data.

\newpage

\appendix\section{The algorithm}
In the Block Gibbs Sampler described in Section \ref{S:Slice}
in principle one needs to sample an infinite number of $V_k$ and $\PPhi_k$.
But in order to proceed with the chain
it suffices to sample a finite number of $V_k$s  to
check condition \eqref{condN} and the finite number of $\PPhi_k$ to be used in \eqref{F4}.

For the sake of clarity we summarize here
the blocked Gibbs sampling algorithm.

\begin{center}
\begin{minipage}[t]{330pt} \par\hrule\vspace{5pt}
{THE ALGORITHM}
\par\vspace{5pt}\hrule
\begin{itemize}
\item INITIAL STEP. Initialize $U$, $D,\tilde \XXi$. With $D$ compute $D^*$ by using \eqref{mstar}.

\item UPDATING STEP. Suppose to have a sample of all the variables involved in the algorithm. This variables that comes from the previous step is labeled with "old". The variables that will be generated in the next step are labeled with "new".

\begin{enumerate}
\item The $(V^*,\tilde \XXi)|{new}$ are sampled by using the $D|{old}$ and \eqref{F2-A} with  Metropolis within Gibbs
 step as described in Subsection \ref{step2}.

\item The $V^{**}|new$ are sampled by using the $D|{old}$ and $\tilde \XXi|{new}$ by a Metropolis step as described
in Subsection \ref{step2}.

\item The $U|{new}$ are sampled by using the $(V^{*},V^{**},\tilde \XXi)|{new}$ and \eqref{F3};

\item $N^*_{i,j}$ are computed by using \eqref{condN}, with $U|{new}$ and $V_j|{new}$.
If some $V_{k}|new$ with $k>D^*|old$ are needed they are sampled
from the prior $P\{V_k \in dv_k|\tilde \XXi\}$.

\item The $\PPhi_j|{new}$
for $j=1,\dots,N^*$, with $N^*:=\max_{i=1,2}\max_{1 \leq j \leq n_{i}}(N^*_{i,j})$,
are sample by using \eqref{F1} and $D|{old}$  as described in \eqref{F1} and in Section \ref{S.S51}.

\item The $D|{new}$ are sampled by using \eqref{F4} with $U|{new}$, $V^{(n)}|new$ and $\PPhi_1|new,\dots,\PPhi_{N^*}|new$.

\end{enumerate}

\end{itemize}\label{algoritmo}

\hrule\vspace{5pt}
\end{minipage}
\end{center}

\section{Proofs}
\begin{proof}[\it Proof of Proposition \ref{corr}]
First of all observe that
\begin{equation}\label{f00}
\begin{split}
\E[\TG_{1}(A)\TG_{2}(B)]& =\sum_{h \geq 1, k \geq 1} \E[\J_A(\PPhi_{1k}) \J_B(\PPhi_{2h})]\E[W_{1k}W_{2h}] \\
& =\sum_{h \geq 1, k \geq 1, h \not=k} \E[\J_A(\PPhi_{1k})] \E[\J_B(\PPhi_{2h})]
\E[W_{1k}]\E[W_{2h}]\\ & \qquad + \sum_{h \geq 1}\E[\J_{A \times B} (\PPhi_{1h},\PPhi_{2h})] \E[W_{1h}W_{2h}].\\
\end{split}
\end{equation}
Now note that the following equalities hold
\begin{equation}\label{f1}
\sum_{h \geq 1} \E\left[S_{1h} S_{2h} \prod_{m \leq h-1} (1-S_{1m})(1-S_{2m}) \right ]
=\frac{\E[S_{11}S_{21}]}{\E[S_{11}]+\E[S_{21}]-\E[S_{11}S_{21}]}
\end{equation}
\begin{equation}\label{f2}
\sum_{h \not = k} \E\left[S_{1h} S_{2k} \prod_{m \leq h-1} (1-S_{1m})\prod_{l \leq k-1} (1-S_{2l})\right  ]
=\frac{\E[S_{11}]+\E[S_{21}]-2\E[S_{11}S_{21}]}{\E[S_{11}]+\E[S_{21}]-\E[S_{11}S_{21}]}
\end{equation}
Combining \eqref{f00} with \eqref{f1}-\eqref{f2} it follows that
$$\E[\TG_1(A) \TG_2(B)]= G_0(A \times B\times\X^{r-2}) \frac{\E[S_{11}S_{21}]}{\E[S_{11}]+\E[S_{21}]-\E[S_{11}S_{21}]}$$
$$\quad+G_{01}(A) G_{02}(B) \frac{\E[S_{11}]+\E[S_{21}]-2\E[S_{11}S_{21}]}{\E[S_{11}]+\E[S_{21}]-\E[S_{11}S_{21}]}.
$$
Since $\E[\TG_i(\cdot)]=G_{0i}(\cdot)$, $i=1,\dots,r$, it follows
\begin{equation}\label{fnn}
Cov[\TG_1(A),\TG_2(B)]=
 \frac{\E[S_{11}S_{21}]}{\E[S_{11}]+\E[S_{21}]-\E[S_{11}S_{21}]} [G_0(A \times B\times\X^{r-2})-G_{01}(A) G_{02}(B) ].
\end{equation}
In a similar fashion
\begin{equation}\label{f4}
\E[\TG_1(A)^2 ]= \frac{G_{01}(A) \E[S_{11}^2]  +2G_{01}^2(A)(\E[S_{11}]-\E[S_{11}^2])}{2\E[S_{11}]-\E[S_{11}^2]}
\end{equation}
\begin{equation}\label{f5}
\E[\TG_2(B)^2 ]= \frac{G_{02}(B) \E[S_{21}^2]  +2\G_{02}^2(B)(\E[S_{21}]-\E[S_{21}^2])}{2\E[S_{21}]-\E[S_{21}^2]}
\end{equation}
and then
\begin{equation}\label{f6}
Var[\TG_1(A)]= G_{01}(A)(1-G_{01}(A)) \frac{ \E[S_{11}^2]}{2\E[S_{11}]-\E[S_{11}^2]}
\end{equation}
\begin{equation}\label{f7}
Var[\TG_2(B)]= G_{02}(B)(1-G_{02}(B)) \frac{ \E[S_{21}^2]}{2\E[S_{21}]-\E[S_{21}^2]}
\end{equation}
\end{proof}

\begin{proof}[\it Proof of Corollary \ref{corrH}]
By direct calculation or using the results in \cite{NadarajahKotz:2005} one obtains
\begin{eqnarray*}
&&\mathbb{E}(S_{21})=\mathbb{E}(S_{11})=\frac{1}{1+\alpha_1+\alpha_2},  \quad
\mathbb{E}(S_{21}^{2})=\mathbb{E}(S_{11}^{2})=\frac{2}{(1+\alpha_1+\alpha_2)(2+\alpha_1+\alpha_2)} \\
&&\mathbb{E}(S_{21}S_{11})=\frac{B(2,\alpha_1)B(2,\alpha_1)B(\alpha_2,\alpha_1+3)}{B(1,\alpha_1)B(1,\alpha_1)B(\alpha_1+1,\alpha_2)}=\frac{\alpha_1+2}{(\alpha_1+1)(\alpha_1+\alpha_2+1)(\alpha_1+\alpha_2+2)}
\end{eqnarray*}
for (H1) and
\begin{eqnarray*}
&&\mathbb{E}(S_{11})=\frac{1}{1+\alpha_1+\alpha_2}, \,\,
\mathbb{E}(S_{21})=\frac{1}{1+\alpha_1},\,\,               \\
&&\mathbb{E}(S_{11}^{2})=\frac{2}{(1+\alpha_1+\alpha_2)(\alpha_1+\alpha_2+2)},\,\,
\mathbb{E}(S_{21}^{2})=\frac{2}{(1+\alpha_1)(2+\alpha_1)}\\
&&\mathbb{E}(S_{21}S_{11})=\frac{B(3,\alpha_1)B(2+\alpha_1,\alpha_2)}{B(1,\alpha_1)B(\alpha_1+1,\alpha_2)}=\frac{2}{(2+\alpha_1)(1+\alpha_1+\alpha_2)}
\end{eqnarray*}
for (H2).
Hence the correlation between the two random measures
is
\[
\begin{split}
Cor( \TG_1(A), \TG_2(A))&=\frac{\E[S_{21}S_{11}]}{1-\E[(1-S_{21})(1-S_{11})]} \sqrt{\frac{(2\E[S_{21}]-\E[S_{21}^2])(2\E[S_{11}]-\E[S_{11}^2])}{ \E[S_{11}^2]\E[S_{21}^2]}}\\
&=\left(\frac{(\alpha_1+\alpha_2+1)(\alpha_1+2)}{2(\alpha_1+1)(\alpha_1+\alpha_2+1)-(\alpha_1+2)}\right)
\end{split}
\]
and
\[
\begin{split}
Cor( \TG_1(A), \TG_2(A))&=\frac{\E[S_{21}S_{11}]}{1-\E[(1-S_{21})(1-S_{11})]} \sqrt{\frac{(2\E[S_{21}]-\E[S_{21}^2])(2\E[S_{11}]-\E[S_{11}^2])}{ \E[S_{11}^2]\E[S_{21}^2]}}\\
&=\frac{2(\alpha_1+1)}{(\alpha_1+2)(2\alpha_1+\alpha_2+1)-\alpha_1}\sqrt{(\alpha_1+1)(\alpha_1+\alpha_2+1)}
\end{split}
\]
for (H1) and (H2), respectively.

\end{proof}

\begin{proof}[\it Proof of \eqref{corr-r>2H2}]
For the sake of simplicity write $V_k$ in place of $V_{k1}$.
Recall that $V_0 \sim Beta(1,\alpha_1)$ and, for $1 \leq k \leq r-1$, $V_k\sim Beta(1+\alpha_1+\dots+\alpha_{k},\alpha_{k+1})$.
Let $1 \leq i < j \leq r$. Since $S_{i1}=V_0 V_1\dots V_{r-i}$, one gets
\[
S_{i,1}S_{j,1}=V_0^2V_1^2\dots V_{r-j}^2 V_{r-j+1}\dots V_{r-i}.
\]
After some computations, using the fact that $V_j$ are independent,
\[
\E[S_{i,1}S_{j,1}]=\frac{2}{(2+\alpha_1+\dots+\alpha_{r-j+1})(1+\alpha_1+\dots+\alpha_{r-i+1})}.
\]
In addition, one has
\[
\E[S_{i,1}]=\frac{1}{1+\alpha_1+\dots+\alpha_{r-i+1}},
\quad \E[S^2_{i,1}]=\frac{2}{(1+\alpha_1+\dots+\alpha_{r-i+1})(2+\alpha_1+\dots+\alpha_{r-i+1})}.
\]
Set
\[
C_{i,j}:=\frac{\E[S_{i,1}S_{j,1}]}{\E[S_{i,1}]+\E[S_{j,1}]-\E[S_{i,1}S_{j,1}]}
\sqrt{\Big (\frac{2\E[S_{i,1}]}{\E[S_{i,1}^2]}-1\Big)\Big(\frac{2\E[S_{j,1}]}{\E[S_{j,1}^2]}-1\Big)}.
\]
Simple algebra gives
\[
\sqrt{\Big (\frac{2\E[S_{i,1}]}{\E[S_{i,1}^2]}-1\Big)\Big(\frac{2\E[S_{j,1}]}{\E[S_{j,1}^2]}-1\Big)}
=\sqrt{(1+\alpha_1+\dots+\alpha_{r-i+1})(1+\alpha_1+\dots+\alpha_{r-j+1})}
\]
and
\[
\begin{split}
& \frac{\E[S_{i,1}S_{j,1}]}{\E[S_{i,1}]+\E[S_{j,1}]-\E[S_{i,1}S_{j,1}]} \\
& \qquad =\frac{2(1+\alpha_1+\dots+\alpha_{r-j+1})}{
2(1+\alpha_1+\dots+\alpha_{r-j+1})^2+(2+\alpha_1+\dots+\alpha_{r-j+1})(\alpha_{r-j+2}+\dots+\alpha_{r-i+1})}. \\
\end{split}
\]
That is
\[
C_{i,j}=\frac{2\sqrt{(1+\alpha_1+\dots+\alpha_{r-i+1})} (1+\alpha_1+\dots+\alpha_{r-j+1})^{\frac{3}{2}}}{
2(1+\alpha_1+\dots+\alpha_{r-j+1})^2+(2+\alpha_1+\dots+\alpha_{r-j+1})(\alpha_{r-j+2}+\dots+\alpha_{r-i+1})}
\]
which gives \eqref{corr-r>2H2} since $C_{i,j}=corr(G_i(A),G_j(A))$.
\end{proof}

{\it Full-conditionals.}
The joint distribution of $[V,\PPhi,U,D,Y,\tilde \XXi]$ is
\begin{equation}\label{densitacong}
\begin{split}
&P\{ V \in dv , \PPhi \in d\varphi, Y \in dy ,U \in du^{(n)},
D =d^{(n)},\tilde \XXi \in (d\alpha_1,d\alpha_2) \} \\
&= \Big [
\prod_{i=1,2}\prod_{j=1}^{n_i}  \J\{u_{ij} < w_{i,d_{i,j}}  \}
\CK (y_{i,j} |\varphi_{d_{i,j}})  \Big ] \otimes_{i=1,2} \otimes_{j=1}^{n_i} dy_{ij} du_{ij}
\\ & \quad  \quad \quad
 \otimes_{k \geq 1}  \Big [ P\{ V_k \in dv_k|\tilde \alpha=(\alpha_1,\alpha_2)\}  \otimes G_0(d\varphi_k )\Big ] \otimes P\{\XXi \in (d\alpha_1,d\alpha_2) \}\\
\end{split}
\end{equation}
where $w_{i,k}=v_{0k}v_{ik}\prod_{j<k} (1-v_{0j}v_{ij})$, with the convenction that
$v_{2k}=1$, for every $k$, under ($H_2$).

\begin{proof}[Proof of \eqref{F2-A}-\eqref{densQ}] From \eqref{densitacong}
one gets
\[
\begin{split}
& P\{ V \in dv,\tilde \XXi \in (d\alpha_1,d\alpha_2)  | Y^{(n)},\PPhi,D\} \\
& \propto \Big [\prod_{i=1,2}\prod_{j=1}^{n_i} w_{i,D_{ij}} \Big ]
  \otimes_{j \geq 1} P\{V_j \in dv_j|\tilde \XXi=(\alpha_1,\alpha_2) \}
P\{\tilde \XXi \in (d\alpha_1,d\alpha_2) \}. \\
\end{split}
\]
Now note that
\[\begin{split}
\prod_{i=1,2}\prod_{j=1}^{n_i} w_{i,D_{ij}} & = \prod_{j=1}^{D^*} v_{0j}^{A_{1j}+A_{2j}} v_{1j}^{A_{1j}}v_{2j}^{A_{2j}} \\
& \quad (1-v_{0j}v_{1j})^{B_{1j}} (1-v_{0j}v_{2j})^{B_{2j}}.  \\
\end{split}
\]
\end{proof}



\end{document}